\theoremstyle{plain}
\newtheorem{theorem}{Theorem}
\newtheorem{proposition}{Proposition}
\newtheorem{lemma}{Lemma}
\newtheorem{conjecture}{Conjecture}
\def\Q{\mathbb{Q}}
\def\A{\mathbb{A}}
\def\C{\mathbb{C}}
\def\Mat{\text{Mat}}
\def\diag{\text{diag}}
\def\Hom{\text{Hom}}
\numberwithin{equation}{section}
\begin{document}

\title{On the Whittaker range of the generalized metaplectic theta lift}
\author{Solomon Friedberg and David Ginzburg}
\date{September 10, 2021}
\address{Friedberg:  Department of Mathematics, Boston College, Chestnut Hill, MA 02467-3806, USA}
\email{solomon.friedberg@bc.edu}
\address{Ginzburg: School of Mathematical Sciences, Tel Aviv University, Ramat Aviv, Tel Aviv 6997801,
Israel}
\email{ginzburg@post.tau.ac.il}
\thanks{This work was supported by the BSF,
grant number 2020011 and by the NSF, grant numbers DMS-1801497 and DMS-2100206 (Friedberg).}
\subjclass[2010]{Primary 11F27; Secondary 11F70}
\keywords{Metaplectic cover, theta lifting, Weil representation, unipotent orbit}
\begin{abstract}
The classical theta correspondence, based on the Weil representation, allows one to lift automorphic representations on symplectic groups or their double covers to automorphic representations on
special orthogonal groups.
It is of interest to vary the orthogonal group and describe the behavior in this theta tower (the Rallis tower).
In prior work, the authors obtained an extension of the classical theta correspondence to higher degree metaplectic covers of symplectic and special orthogonal groups that is based on the tensor product
of the Weil representation with another small representation.  In this work we study the existence of generic lifts in the resulting theta tower.  In the classical case, 
there are two orthogonal groups that may support
a generic lift of an irreducible cuspidal automorphic representation of a symplectic group.
We show that in general the Whittaker range consists of $r+1$ groups for the lift from the $r$-fold cover of a symplectic group.  We 
also give a period criterion for the genericity of the lift at each step of the tower.
\end{abstract}
\maketitle
\setcounter{MaxMatrixCols}{20}

\section{Introduction}\label{intro}

The classical theta correspondence gives a systematic way to lift automorphic representations on one group to automorphic representations on another.
This correspondence is based on restricting the theta representation on the metaplectic double cover of a symplectic group to a reductive dual pair in the symplectic group (or, more accurately, its
inverse image in the double cover).
In \cite{F-G3} the authors introduced an extension of the notion of reductive dual pair that allows the construction of correspondences using the tensor product of two theta
representations. We used this to construct a theta lift between genuine automorphic representations on higher degree metaplectic covers of
symplectic and orthogonal groups. Our lift matches the classical theta lift for the trivial cover of the orthogonal group, as in that case one of the theta representations in the tensor product is trivial.   
In this paper we study the question of when the generalized theta lift of a given automorphic representation is globally generic.  
We shall show that the theory for the classical theta lift is the first case of a broader theory that, in general, involves more possibilities.

Let $r\geq1$ be an odd integer, $F$ denote a global field which contains a full set of $r$-th roots of unity $\mu_r$, and ${\A}$ denote the ring of adeles of $F$. 
Let $Sp_{2n}^{(r)}(\A)$ denote the $r$-fold metaplectic cover of the symplectic group $Sp_{2n}({\A})$, which is a topological central extension of $Sp_{2n}(\A)$ by $\mu_r$.
The construction of such a covering group goes back to Matsumoto \cite{Mat}.
Let $\pi^{(r)}$ denote a
genuine irreducible cuspidal automorphic representation of $Sp_{2n}^{(r)}(\A)$. 
In \cite{F-G3}
 the authors constructed, via the generalized theta lift described above, 
an automorphic representation $\sigma^{(\kappa r)}$ of the $\kappa r$-fold metaplectic cover $SO_k^{(\kappa r)}({\A})$ of the split special orthogonal group
$SO_k({\A})$, with $\kappa=1$ if $k$ is even and $\kappa=2$ if $k$ is odd.  
The authors studied the Rallis tower (i.e.\ the lift for fixed $\pi^{(r)}$ and varying $k$), 
proved that the first nontrivial occurrence in the tower is cuspidal, and showed that the unramified local lift is functorial in the equal rank case.

In this paper we 
concentrate on the tower of split even orthogonal groups $SO_{2k}$, so that $\kappa=1$; we recall the construction of the lift in this case
in equation \eqref{lift2}.  Denote the automorphic representation so-obtained by  $\sigma_{n,k}^{(r)}$.  Let $\psi$ be a fixed nontrivial additive character of $F\backslash \A$.
The main objects of study here are the Whittaker coefficients attached to $\sigma_{n,k}^{(r)}$. 
These coefficients are introduced below in equation \eqref{wh1} and denoted ${\mathcal W}_{k,\delta}(f)$. Here $\delta\in F^*$, and $f$ is a function in the 
space of $\sigma_{n,k}^{(r)}$.  
The class of $\delta$ modulo $(F^*)^2$ indexes an orbit
of generic characters modulo the action of the rational split torus by conjugation. (One could also obtain these orbits by setting $\delta=1$ and varying $\psi$ in \eqref{whit-char}.)
If ${\mathcal W}_{k,\delta}(f)$ is not zero for some choice of data and some $\delta$, we say that $\sigma_{n,k}^{(r)}$ is (globally) generic.  
In this paper we will give conditions on $\pi^{(r)}$ and $k$ so that ${\mathcal W}_{k,\delta}(f)$ is not zero for some $f$ and relate this question to certain periods of $\pi^{(r)}$.
We will determine the locations $k$ in the Rallis tower such that $\sigma_{n,k}$ can be generic,  the $k$ for which it must be generic provided that $\pi^{(r)}$ is generic, and the $k$ for which it can never be generic.
In \cite{F-G3} we  did not prove that a nonvanishing lift of $\pi^{(r)}$ always occurs, but we will establish this for generic automorphic representations here.

Before stating our main results, it will be useful to describe the situation in the case when $r=1$, that is in the case of the classical global theta correspondence. 
Let $\theta_{4kn}^{(2)}$ be a theta series defined on the classical metaplectic group $Sp_{4kn}^{(2)}({\A})$, the double cover of the symplectic group $Sp_{4nk}(\A)$. 
See for example \cite{G-R-S2}, Section 1, part {\bf 6}.
These theta series
depend on a choice of $\psi$ as above and a Schwartz function $\phi$.  (We always suppress $\phi$ from the notation, but sometimes write 
$\theta_{4kn}^{(2),\psi}$.)
Let $\iota_1$ be the tensor product embedding $\iota_1:SO_{2k}\times Sp_{2n}\to Sp_{4nk}$.  The double cover splits over the image of $\iota_1$, as in Kudla \cite{Ku1}.  (See also Sweet \cite{Sw}.)
Let $\pi$ be an irreducible cuspidal automorphic representation of $Sp_{2n}(\A)$.
Denote by $\sigma_{n,k}$ the representation of $SO_{2k}(F)\backslash SO_{2k}({\A})$ generated by all functions $f(h)$ obtained by using $\theta_{4kn}^{(2)}$ as an integral kernel,
\begin{equation*}
f(h)=\int\limits_{Sp_{2n}(F)\backslash Sp_{2n}({\A})}
\varphi(g)\theta_{4kn}^{(2)}(\iota_1(h,g))\,dg,
\end{equation*}
with $\varphi$ in $\pi$ and all Schwartz functions $\phi$.
The map from $\pi$ to $\sigma_{n,k}$ is the classical theta correspondence. See for example Howe \cite{Ho}.

The properties of the classical theta correspondence of concern to us here are the following, established by Ginzburg, Rallis and Soudry \cite{G-R-S1}.
\begin{theorem}\label{classic-theorem}
 Suppose that $r=1$, so that $\sigma_{n,k}$ is the classical theta lift of the irreducible cuspidal automorphic representation $\pi$ on $Sp_{2n}(\A)$ to $SO_{2k}(\A)$.
\begin{enumerate}
\item Suppose that $k\neq n, n+1$.  Then ${\mathcal W}_{k,\delta}(f)$ is zero for all choices of data and all $\delta$, that is, the representation $\sigma_{n,k}$ is not generic.
\item\label{classic-one-theorem-part2}  Suppose that $k=n+1$.  Then the coefficient ${\mathcal W}_{k,\delta}(f)$ is not zero for some choice of data if and only if $\pi$ is a 
generic representation with respect to $\delta$.
\item Suppose that $k=n$.  Then the coefficient ${\mathcal W}_{k,\delta}(f)$ is not zero for some choice of data if and only if a certain period integral depending on $\delta$
(given by \eqref{intro2} below with $m=1$) is nonzero
for some choice of data.
\end{enumerate}
\end{theorem}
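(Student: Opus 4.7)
The plan is to unfold $\mathcal{W}_{k,\delta}(f)$ by substituting the definition of $f$ as a theta integral, interchanging the $Sp_{2n}$- and $SO_{2k}$-integrations, and then exploiting the explicit form of the Weil representation. Realizing $\theta_{4kn}^{(2),\psi}$ in the Schr\"odinger model on Schwartz functions on $\Mat_{k\times 2n}(\A)$, attached to a complete polarization of the symplectic space induced by $\iota_1$, the action of the maximal unipotent $V_{2k}\subset SO_{2k}$ on Schwartz functions consists of translations together with multiplication by explicit quadratic Fourier kernels. Integration against the generic character attached to $\delta$ then collapses the support of the Schwartz function and rewrites the inner $V_{2k}$-integral as a sum of operators indexed by the rank stratum in $\Mat_{k\times 2n}$; only the top-rank stratum survives once genericity is imposed.

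After a root-exchange argument in the style of \cite{G-R-S1}, one arrives at an identity of the schematic form
\begin{equation*}
\mathcal{W}_{k,\delta}(f) = \int_{H(F)\backslash H(\A)}\int_{U(F)\backslash U(\A)} \varphi(uh)\,\widetilde{\phi}(u,h)\,\psi_U^{-1}(u)\,du\,dh,
\end{equation*}
where $U\subset Sp_{2n}$ is a unipotent subgroup, $H$ is a subgroup acting on the inner quotient, $\psi_U$ is a character of $U$, and $\widetilde{\phi}$ is a Schwartz transform of $\phi$ that can be varied freely with $\phi$. The triple $(U,H,\psi_U)$ is dictated by $k$ and the class of $\delta$ in $F^*/(F^*)^2$. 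When $k=n+1$, the $U$-integration recovers precisely the $\psi_\delta$-Whittaker coefficient of $\varphi$ on $Sp_{2n}$ and $H$ is trivial, so $\mathcal{W}_{k,\delta}(f)$ is nonzero for some data if and only if $\pi$ is $\psi_\delta$-generic, yielding part~(\ref{classic-one-theorem-part2}). When $k=n$, the subgroup $U$ is strictly smaller and $H$ nontrivial, and the unfolded integral coincides with the period in \eqref{intro2} with $m=1$, yielding part~(3). When $k\geq n+2$, the character $\psi_U$ is attached to a unipotent orbit of $Sp_{2n}$ strictly larger than the regular orbit, and a further Fourier expansion exposes a nontrivial inner integration over a unipotent of $Sp_{2n}$ against the trivial character, annihilating $\varphi$ by cuspidality; when $k\leq n-1$, the inner integral contains a nontrivial constant-term integration of $\varphi$ along a unipotent, again forcing vanishing by cuspidality. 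Together these cases give part~(1).

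The principal obstacle is the root-exchange step and the precise identification of $(U,H,\psi_U)$: one must track the sum of orbits arising from the $V_{2k}$-Fourier expansion of the Schwartz function, indexed by the rank of matrices in $\Mat_{k\times 2n}$ modulo the stabilizer of the generic character, and verify that only the open stratum contributes. Matching the class of $\delta$ in $F^*/(F^*)^2$ with the orbit of $\psi_U$ is handled by conjugating by rational elements of the split torus of $SO_{2k}$ and tracking their action on $\phi$ through $\iota_1$. Convergence of the interchanged integrations is justified by the rapid decay of cuspidal $\varphi$ combined with standard estimates on the theta kernel.
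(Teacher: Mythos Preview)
Your treatment of the cases $k\ge n+2$, $k=n+1$, and $k=n$ follows the same line as the paper (and as \cite{G-R-S1}): unfold the theta series in the Schr\"odinger model, perform root exchange, and identify the resulting inner integral. The phrase ``unipotent orbit of $Sp_{2n}$ strictly larger than the regular orbit'' is misstated (no such orbit exists), but what you actually use for $k\ge n+2$---that a further expansion isolates a constant term of $\varphi$ along a nontrivial unipotent radical, killed by cuspidality---is correct and matches the paper.

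The genuine gap is in the case $k\le n-1$. You assert that ``the inner integral contains a nontrivial constant-term integration of $\varphi$ along a unipotent, again forcing vanishing by cuspidality.'' This is not what happens. After unfolding, the Whittaker coefficient becomes (up to choice of data) the period \eqref{intro2} with $m=n-k+1\ge 2$; this is a Fourier--Jacobi type period involving $\varphi$ paired against a product of two theta functions on $Sp_{2m}^{(2)}$, and no constant term of $\varphi$ appears. Cuspidality of $\varphi$ does not force such a period to vanish. The argument that actually gives vanishing for $m\ge 2$ is of a different nature: by Ikeda \cite{I1}, the theta function $\theta_{2m}^{(2),\psi^\delta}$ is a residue of the Eisenstein series $E^{(2),\psi^\delta}(\cdot,s)$ on $Sp_{2m}^{(2)}(\A)$ induced from the trivial representation of $GL_m$; replacing $\overline{\theta_{2m}^{(2),\psi^\delta}}$ by $\overline{E^{(2),\psi^\delta}(\cdot,s)}$ in \eqref{intro2} and unfolding as in \cite{G-R-S2} produces the Whittaker coefficient of the trivial representation of $GL_m(\A)$ as an inner integration, and this vanishes for $m\ge 2$. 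You need this step (or an equivalent substitute); cuspidality alone will not close the argument.
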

The condition
on genericity in part~\ref{classic-one-theorem-part2} once again reflects that the specific choice of generic character for $\pi$ is related to $\delta\in F^*$, and is needed since
there is more than one orbit of generic characters for $Sp_{2n}(F)\backslash Sp_{2n}(\A)$ under the action by conjugation of the standard maximal torus of $Sp_{2n}(F)$.
For more details see \cite{G-R-S1},~Theorem 2.1.

In view of Theorem~\ref{classic-theorem}, we refer to the numbers $k=n,n+1$ as the {\sl Whittaker range} of the theta lift from $Sp_{2n}$ to $SO_{2k}$.  Also, it follows that
a generic representation appears in the tower of theta lifts of $\pi$, i.e., the representation $\sigma_{n,k}$ is generic for some $k$, some $\delta$, and some choice of data, if and only if $\pi$ is generic.

In this paper we extend these properties to the general case, that is, all $r\geq1$, $r$ odd. 
We require two hypotheses. The first, the Orbit Conjecture (Conjecture~\ref{conj1} in Section~\ref{basic} below), 
concerns the Fourier coefficients associated to the theta representation $\Theta_{r-1}^{(r)}$ on the $r$-fold cover of $Sp_{r-1}$. 
It holds when $r=3$, $F=\Q(e^{2 \pi i /3})$ by work of Patterson \cite{Pat}. We give the context for this conjecture in Section~\ref{basic}, and an unconditional result in 
Proposition~\ref{prop7} below.
The second, the Descent Conjecture, concerns the descent of the theta representation $\Theta_{2l}^{(r)}$.  It is formulated in Section~\ref{4G},  Conjecture~\ref{conj20}, below.
A slightly stronger version is also proposed in \cite{F-G1}, Conjecture~4.2.
Replacing the Descent Conjecture by a local version, which is known, we may obtain (Proposition~\ref{weaker-prop-4}) slightly weaker results without assuming this.

 Let $\pi^{(r)}$ and $\sigma_{n,k}^{(r)}$ be as above. 
We shall establish the following result.
\begin{theorem}\label{theorem-general}  Let $r\geq1$ be an odd integer.  Suppose that the Orbit and Descent Conjectures are satisfied.  Then the following statements hold.
\begin{enumerate}
\item Suppose that $k\neq n-\frac{r-1}{2}, n-\frac{r-3}{2},\ldots, n+\frac{r+1}{2}$.
Then ${\mathcal W}_{k,\delta}(f)$ is zero for all choices of data and all $\delta$, that is, the representation $\sigma^{(r)}_{n,k}$ is not generic.
\item
Suppose that $k=n+\frac{r+1}{2}$.  Then the coefficient ${\mathcal W}_{k,\delta}(f)$ is not zero for some choice of data if and only if $\pi^{(r)}$ is a generic representation with respect to $\delta$.
\item Suppose that $k$ is one of the values $n-\frac{r-1}{2}, n-\frac{r-3}{2},\ldots, n+\frac{r-1}{2}$. Then the coefficient ${\mathcal W}_{k,\delta}(f)$ is not zero for some choice of data if and only if a certain period integral,
depending on $k$ and $\delta$ (and given in Proposition~\ref{general-two} below), is nonzero for some choice of data.
\end{enumerate}
\end{theorem}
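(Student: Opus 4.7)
The plan is to compute ${\mathcal W}_{k,\delta}(f)$ directly by substituting the integral representation of $f$ from \eqref{lift2} and unfolding in the spirit of the classical argument of \cite{G-R-S1}. Writing $f(h)$ as an integral of $\varphi\in\pi^{(r)}$ against the generalized theta kernel, which in this setting is the product of the Weil theta $\theta^{\psi}_{4nk}$ on the double cover with a small theta representation $\Theta^{(r)}$ on the $r$-fold cover embedded into $Sp_{4nk}^{(r)}$, I obtain
\begin{equation*}
{\mathcal W}_{k,\delta}(f)=\int\limits_{[U_k]}\int\limits_{[Sp_{2n}]}\varphi(g)\,\theta^{\psi}_{4nk}(\iota(u,g))\,\Theta^{(r)}(\iota'(u,g))\,\psi_{U_k,\delta}^{-1}(u)\,dg\,du,
\end{equation*}
where $[G]=G(F)\backslash G({\A})$, $U_k$ is the maximal unipotent radical of $SO_{2k}$, and $\psi_{U_k,\delta}$ is the generic character determined by $\delta$. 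Swapping the order of integration, I would treat the inner expression as a Fourier coefficient of the tensor $\theta^{\psi}_{4nk}\otimes\Theta^{(r)}$ along a unipotent subgroup of $Sp_{4nk}^{(r)}$ obtained from $U_k$ via $\iota,\iota'$.

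The next step is to massage this Fourier coefficient by a sequence of root exchanges (as in \cite{F-G3} and the classical case of \cite{G-R-S1}) until the $U_k$-integration, combined with integrations over a unipotent piece of $Sp_{2n}$, extracts a Fourier coefficient of $\Theta^{(r)}$ attached to an explicit unipotent orbit on $Sp^{(r)}$. Invoking the Orbit Conjecture pins down exactly which orbits of $\Theta_{r-1}^{(r)}$ support such a coefficient, and invoking the Descent Conjecture rewrites the remaining $\Theta^{(r)}$-integration as a descent period against $\varphi$. The net output should be an identification of ${\mathcal W}_{k,\delta}(f)$, up to nonvanishing normalizing factors, with a specific period of $\varphi$ whose unipotent structure depends on $k$.

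With this reduction the three parts fall out by a case analysis on $k$. For part~(1), when $k$ lies outside the stated range, the combined unipotent integration against $\psi_\delta$ forces a Fourier coefficient of $\varphi$ along a unipotent orbit strictly larger than the maximal orbit permitted by cuspidality of $\pi^{(r)}$, so the integral is identically zero. For part~(2), at the top value $k=n+\tfrac{r+1}{2}$, the reduction collapses to the standard $(\psi,\delta)$-Whittaker coefficient of $\pi^{(r)}$, so nonvanishing of ${\mathcal W}_{k,\delta}(f)$ for some data is equivalent to $\pi^{(r)}$ being generic with respect to $\delta$. For part~(3), when $k=n+j$ with $-\tfrac{r-1}{2}\le j\le\tfrac{r-1}{2}$, the reduction produces the period integral on $\pi^{(r)}$ specified in Proposition~\ref{general-two}, and the claimed equivalence amounts to the standard ``integrate-and-unfold'' duality between the Whittaker coefficient of the lift and this period.

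The main obstacle is the bookkeeping: one must identify at each step the correct unipotent subgroup along which to Fourier expand each of the two theta kernels, justify all interchanges of summation and integration, and precisely match the Fourier coefficients appearing on the two sides so that both conjectures can be applied in the right form. The technical heart of the argument is organizing these root exchanges so that the Orbit Conjecture yields a nontrivial Fourier coefficient of $\Theta_{r-1}^{(r)}$ precisely on the claimed Whittaker range, and so that the Descent Conjecture produces exactly the period integral of Proposition~\ref{general-two} rather than a more complicated descent, which is also where the unconditional weakening of Proposition~\ref{weaker-prop-4} must be threaded in once the Descent Conjecture is replaced by its known local analogue.
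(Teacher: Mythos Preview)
Your outline has the right flavor—unfold, root-exchange, reduce to periods—but the mechanisms you invoke for part~(1) are wrong, and key structural steps are missing. You claim the vanishing outside the Whittaker range comes from ``a Fourier coefficient of $\varphi$ along a unipotent orbit strictly larger than the maximal orbit permitted by cuspidality of $\pi^{(r)}$.'' Cuspidality does not bound orbits from above (a generic cuspidal $\pi^{(r)}$ supports the regular orbit); it kills constant terms. The two ends of the range vanish for different reasons. For $k\ge n+\tfrac{r+3}{2}$ one expands along a Heisenberg-type subgroup: the nontrivial terms give Fourier coefficients of $\Theta^{(r)}$ attached to the orbit $((r+1)1^{\dots})$, which vanish by the \emph{smallness} of $\Theta^{(r)}$ (Theorem~\ref{theta05}), while the trivial term yields a constant term of $\varphi^{(r)}$, which cuspidality kills. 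For $k\le n-\tfrac{r+1}{2}$ the argument is completely different: the Descent Conjecture identifies the $2r$-fold theta as the residue of an Eisenstein series, one unfolds that, and the inner integration becomes a Whittaker coefficient of $\Theta_{GL_{n+r_1-k+1}}^{(r)}$, which vanishes by Kazhdan--Patterson since $n+r_1-k+1>r$.

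For part~(2) you say the computation ``collapses to the standard Whittaker coefficient of $\pi^{(r)}$,'' but in fact it \emph{factors} as $W_{\overline{\varphi^{(r)}},-\delta}(e)$ times a Whittaker integral of $\Theta_{r-1}^{(r)}$ on $Sp_{r-1}$ (Proposition~\ref{prop7}); the Orbit Conjecture is what guarantees the second factor is nonzero for every square class $\delta$. You also omit two essential structural steps: the Weil theta $\theta_{4kn}^{(2)}$ is disposed of at the outset by unfolding its lattice sum and evaluating the Schwartz function, so that all subsequent work involves only Fourier coefficients of $\theta_{2n+2k(r-1)}^{(r)}$ (see \eqref{wh3}); and Ikeda's Fourier--Jacobi theory is used to split off a $\theta^{(2),\psi^\delta}$ factor from the descent integral, which is how the Descent Conjecture enters and how the periods \eqref{intro4}, \eqref{intro5} actually arise. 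Without these, the ``bookkeeping'' you flag as the main obstacle does not organize into the inductive Lemmas~\ref{lem1} and~\ref{lem4} that drive the proof.
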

Note that Theorem~\ref{theorem-general} asserts that the Whittaker range expands from $k=n,n+1$ for the classical theta lift from $Sp_{2n}$ to $SO_{2k}$ , i.e.\ the case $r=1$, to 
$k=n-\frac{r-1}{2},n-\frac{r-3}{2},\ldots, n+\frac{r+1}{2}$ for the generalized theta lift from from $Sp_{2n}^{(r)}$ to $SO_{2k}^{(r)}$. 
We will describe the related period integrals momentarily.  In the general case a new phenomenon appears:
the shape of the period integral changes as we pass from the interval $2\le k\le \frac{r+1}{2}$ to the interval $\frac{r+3}{2}\le k\le n+\frac{r-1}{2}$; the first interval does not appear if $r=1$.
Also, it follows that, as in the classical case,
a generic representation appears in the tower of theta lifts of $\pi^{(r)}$, i.e., the representation $\sigma_{n,k}^{(r)}$ is generic for some $k$, some $\delta$, and some choice of data, if and only if $\pi^{(r)}$ is generic.

We remark that if $r>1$ we do not know whether or not the full Whittaker range is attained, equivalently
whether or not for each $k$ in the Whittaker range there exist an automorphic representation on $Sp_{2n}^{(r)}(\A)$ such that the corresponding period is nonzero for some choice of data
and some $\delta$. We suspect that this is the case.  
We also remark that one may fix the automorphic representation on the orthogonal group and consider the tower of symplectic groups, introduced by Rallis \cite{R}.  For $r=1$ and even orthogonal groups the
question of nonvanishing of the theta lift is treated by Roberts \cite{Ro}.

Before outlining the proof for general $r$, let us review the proof of Theorem~\ref{classic-theorem} and also the connection to period integrals in the classical case.  We break the proof of this result into four parts. 
The first two are:
\begin{proposition}\label{classic-one}  Suppose that $r=1$, so that $\sigma_{n,k}$ is the classical theta lift of $\pi$ on $Sp_{2n}$ to $SO_{2k}$.
\begin{enumerate}
\item{[Classical-1]}\label{classic-one-prop-part1} Suppose that $k\ge n+2$. Then ${\mathcal W}_{k,\delta}(f)$ is zero for all choices of data and all $\delta$.
\item{[Classical-2]} When $k=n+1$, the coefficient ${\mathcal W}_{k,\delta}(f)$ is not zero for some choice of data if and only if 
$\pi$ is a generic representation with respect to $\delta$.
\end{enumerate}
\end{proposition}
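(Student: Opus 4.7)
The plan is to compute $\mathcal{W}_{k,\delta}(f)$ by unfolding against the theta kernel. After substituting the definition of $f$ and exchanging the order of integration (justified by rapid decay of $\varphi$ and moderate growth of $\theta$), the problem reduces to analyzing the inner integral
$$I_\delta(g)=\int_{U(F)\backslash U(\A)}\theta_{4kn}^{(2),\psi}(\iota_1(u,g))\,\psi_{U,\delta}^{-1}(u)\,du,$$
where $U$ denotes the standard maximal unipotent of $SO_{2k}$, and then evaluating the pairing $\int_{Sp_{2n}(F)\backslash Sp_{2n}(\A)}\varphi(g)\,I_\delta(g)\,dg$ for $\varphi\in\pi$.

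To compute $I_\delta(g)$, I would realize the theta kernel via the Schr\"odinger model of the Weil representation on the Lagrangian $L=F^{2k}\otimes L_n$, where $L_n\subset F^{2n}$ is a fixed Lagrangian adapted to the chosen Borel of $Sp_{2n}$. The image $\iota_1(U,\{1\})$ acts on this model by explicit upper-triangular formulas (tensoring the standard $SO_{2k}$-action with the identity on $L_n$), so averaging against $\psi_{U,\delta}^{-1}$ projects the Weil representation onto a subspace of Schwartz functions supported on a smaller affine subvariety. The shape of this projection is controlled by the discrepancy $k-n$: the first $n$ simple root characters of $U$ pair off with the coordinate directions of $L_n$ to produce a Jacobi-type object in $g$, while any excess simple root contributes an additional integration against a coordinate of $\varphi(g)$.

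For part (1), with $k\geq n+2$, the Schr\"odinger computation expresses $I_\delta(g)$ as an integral that depends on $\varphi$ only through its constant term along a nontrivial unipotent subgroup of $Sp_{2n}$: the $k-n-1\geq 1$ extra orthogonal simple roots translate, via a root-exchange argument on the symplectic side, into an integration over the unipotent radical of a proper parabolic of $Sp_{2n}$, which is killed by cuspidality of $\pi$; hence $\mathcal{W}_{k,\delta}(f)=0$. For part (2), with $k=n+1$, the same computation collapses dimensionally and the remaining pairing equals a Schwartz-function-dependent multiple of the standard $\delta$-twisted Whittaker coefficient of $\varphi$ on $Sp_{2n}$. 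The ``only if'' direction is then immediate. For the ``if'' direction, given a $\delta$-generic $\pi$ one chooses $\varphi$ realizing a nonzero $\delta$-Whittaker functional together with a Schwartz function $\phi$ for which the resulting Schwartz factor in the Schr\"odinger model is nonvanishing, by a standard density argument.

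The main obstacle is the explicit Schr\"odinger-model calculation of $I_\delta$: one must match the root structure of $SO_{2k}$ under the tensor product embedding $\iota_1$ with a compatible polarization of the Weil representation, and then execute the root-exchange manipulations that either shift the remaining $g$-dependence onto a proper-parabolic period (killed by cuspidality in case (1)) or onto the Whittaker coefficient of $\varphi$ (in case (2)). The clean case distinction between $k\geq n+2$ and $k=n+1$ should emerge naturally from this computation through a dimension count on the Lagrangian $L$.
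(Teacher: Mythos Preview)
Your proposal is correct and follows essentially the same approach as the paper: unfold the theta series in the Schr\"odinger model, use root exchange to reorganize the unipotent integration, and then invoke cuspidality of $\varphi$ for part~(1) and identify the remaining expression with a $\delta$-Whittaker coefficient of $\varphi$ for part~(2). The paper's treatment (carried out in greater generality for arbitrary odd $r$ in Section~\ref{whcoeff} and specialized to $r=1$ here) proceeds in exactly this way, and your description of the mechanism---excess simple roots on the orthogonal side producing a proper-parabolic constant term on the symplectic side---is the right heuristic for why the dimension count $k-n$ governs the outcome.
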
\noindent
The proof of Proposition~\ref{classic-one} is based on a direct calculation of the Whittaker integral ${\mathcal W}_{k,\delta}(f)$ by unfolding the theta series and using root exchange. 
The vanishing in part~\ref{classic-one-prop-part1} makes use 
of the cuspidality of $\varphi$.  

To describe the remaining steps, we introduce the following notation. For $0\le m\le n$, let $U_{n,m}$ denote the unipotent radical of the parabolic subgroup of $Sp_{2n}$ whose Levi part is 
$GL_1^{m}\times Sp_{2(n-m)}$. When $m=0$, we define $U_{n,m}$ to be the trivial group. When $n=m$, the group $U_{n,m}$ is the maximal unipotent subgroup of $Sp_{2n}$. 
Fix $\psi$ a nontrivial additive character of $\A$ trivial on $F$, and 
let $\psi_{U_{n,m}}$ denote the character  of $U_{n,m}(F)\backslash U_{n,m}(\A)$ given on $u=(u_{i,j})\in U_{n,m}(\A)$ by 
\begin{equation}\label{character-U-nm}\psi_{U_{n,m}}(u)=\psi(u_{1,2}+u_{2,3}+\cdots+u_{m-1,m}).
\end{equation}
The group $U_{n,m}$ has a structure of a generalized Heisenberg group, and one can define a homomorphism $l$ from $U_{n,m}$ onto the Heisenberg group ${\mathcal H}_{2(n-m)+1}$. 
(See \eqref{the-hom-ell} below.)
Let $\theta_{2m}^{(2),\psi^\delta}$ be a theta function on $Sp_{2m}^{(2)}({\A})$ formed using the Weil representation with additive character $\psi^{\delta}$ given by $\psi^{\delta}(x)=\psi(\delta x)$.
The function $\theta_{2m}^{(2),\psi^\delta}$ is a function on
the semidirect product $\mathcal{H}_{2m+1}(\A)\rtimes Sp_{2m}^{(2)}({\A})$.
We then introduce the period integral
\begin{multline}\label{intro2}
\int\limits_{Sp_{2m}(F)\backslash Sp_{2m}({\A})}
\int\limits_{U_{n,n-m}(F)\backslash U_{n,n-m}({\A})}
\varphi\left(u\begin{pmatrix}I_{n-m}&&\\&h&\\&&I_{n-m}\end{pmatrix}\right)\theta_{2m}^{(2),\psi^\delta}(l(u)h)\overline{
\theta_{2m}^{(2),\psi^\delta}}(h)\\ \psi_{U_{n,n-m}}(u)\,du\,dh.
\end{multline}
In the integrand, the product of the two theta functions is not a genuine function of $h$, and 
so gives a function on the group $Sp_{2n}(\A)$  itself.
We have
\begin{proposition}\label{classic-two} Suppose that $r=1$, so that $\sigma_{n,k}$ is the classical theta lift of $\pi$ on $Sp_{2n}$ to $SO_{2k}$.
\begin{enumerate}
\item{[Classical-3]} \label{c2-1} Assume $2\le k\le n$, and write $m=n-k+1$. Then  $1\le m \leq n-1$. 
The Fourier coefficient ${\mathcal W}_{k,\delta}(f)$ of the representation $\sigma_{n,k}$ is  zero for all choices of data if and only if the period integral \eqref{intro2} is zero for all choices of data.
\item{[Classical-4]} \label{c2-2} Suppose that $k\le n-1$. Then the representation 
$\sigma_{n,k}$ is not generic. 
\end{enumerate}
\end{proposition}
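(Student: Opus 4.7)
The plan is, for both parts, to begin from
\[{\mathcal W}_{k,\delta}(f)=\int_{V(F)\backslash V(\A)} f(v)\,\psi_{V,\delta}(v)\,dv,\]
with $V$ the standard maximal unipotent subgroup of $SO_{2k}$ and $\psi_{V,\delta}$ the associated Whittaker character, and then substitute the kernel formula $f(h)=\int\varphi(g)\,\theta_{4kn}^{(2)}(\iota_1(h,g))\,dg$ and interchange the $v$- and $g$-integrations. The expression takes the form $\int\varphi(g)K(g)\,dg$, where
\[K(g)=\int_{V(F)\backslash V(\A)}\theta_{4kn}^{(2)}\bigl(\iota_1(v,g)\bigr)\psi_{V,\delta}(v)\,dv.\]
Evaluating $K(g)$ and interpreting the resulting pairing with $\varphi$ is the common task.

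For part (Classical-3), I would compute $K(g)$ in a mixed (Schr\"odinger) model of the Weil representation adapted to the tensor product embedding $\iota_1$. With a polarization chosen so that $V$ acts by translations on the Schwartz function, successive integration against the simple-root components $\psi(v_{i,i+1})$ collapses the lattice sum to a single remaining coordinate, and the twist by $\delta$ is absorbed into the character $\psi^\delta$ of the surviving rank-$2m$ theta, with $m=n-k+1$. A standard root-exchange argument then trades the residual unipotent on the $SO_{2k}$ side for $U_{n,n-m}\subset Sp_{2n}$, with the characters matching on paired root subgroups so that the integration character on $U_{n,n-m}$ becomes $\psi_{U_{n,n-m}}$. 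The two surviving theta factors reassemble into $\theta_{2m}^{(2),\psi^\delta}(l(u)h)$ and $\overline{\theta_{2m}^{(2),\psi^\delta}(h)}$, with $l$ the Heisenberg homomorphism of \eqref{the-hom-ell}; writing $g=u\,\diag(I_{n-m},h,I_{n-m})$ modulo the Iwasawa compact factor, the resulting integral is exactly \eqref{intro2}, and the equivalence of nonvanishing follows since no choice of data has been sacrificed along the way.

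For part (Classical-4), I would begin from \eqref{intro2} and use $m\ge 2$ to perform a second Fourier-theoretic identity. Expanding the first theta factor as a Weil lattice sum $\theta_{2m}^{(2),\psi^\delta}(l(u)h)=\sum_{\xi\in F^m}\omega_{\psi^\delta}(l(u)h)\phi(\xi)$ and unfolding the $\xi$-sum against the $u$-integration enlarges the unipotent domain on the $Sp_{2n}$ side to a unipotent subgroup strictly containing $U_{n,n-m}$, against a character that restricts to $\psi_{U_{n,n-m}}$. A second round of root exchange moves these new integration directions inside the $\varphi$-argument, and the whole expression becomes a constant term of $\varphi$ along the unipotent radical $U$ of a parabolic of $Sp_{2n}$ strictly finer than the $(n-m,2m,n-m)$-parabolic. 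Since $\pi$ is cuspidal, this constant term vanishes identically for all data, forcing \eqref{intro2} to be zero; by part (1) this yields ${\mathcal W}_{k,\delta}(f)=0$ for all data, so $\sigma_{n,k}$ is not generic.

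The main obstacle is the bookkeeping in the two rounds of root exchange: in part (1), verifying that the Heisenberg embedding that emerges from the computation coincides exactly with $l$ of \eqref{the-hom-ell} and that the two residual theta factors are paired with the same additive character $\psi^\delta$; and in part (2), correctly identifying the enlarged unipotent $U$ and checking that $m\ge 2$ is precisely the hypothesis that creates a nontrivial direction along which to extend $\psi_{U_{n,n-m}}$ to a character on $U$ in such a way that the constant-term argument applies. The borderline case $m=1$ (i.e.\ $k=n$) necessarily fails the second step because the enlarged unipotent coincides with $U_{n,n-m}$, which is consistent with Theorem~\ref{classic-theorem} allowing the period at $k=n$ to be nonzero.
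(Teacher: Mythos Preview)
Your outline for part~\ref{c2-1} is in line with the paper: a direct unfolding of the Weil theta together with root exchange reduces ${\mathcal W}_{k,\delta}(f)$ to the period \eqref{intro2}. The description is sketchy in places (e.g.\ how exactly two theta factors emerge from the single $\theta_{4kn}^{(2)}$), but the mechanism is the standard one in \cite{G-R-S1}.

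Your argument for part~\ref{c2-2} has a genuine gap. Unfolding the first theta factor $\theta_{2m}^{(2),\psi^\delta}(l(u)h)$ does not enlarge the unipotent group acting on $\varphi$: the lattice sum collapses against coordinates that already lie inside $U_{n,n-m}$, converting certain compact integrations into non-compact ones but not adding new root directions. More seriously, after unfolding you still carry the full $Sp_{2m}(F)\backslash Sp_{2m}(\A)$-integration and the second factor $\overline{\theta_{2m}^{(2),\psi^\delta}}(h)$; these cannot be absorbed into a constant term of $\varphi$ by root exchange alone. And the character $\psi_{U_{n,n-m}}$ is a Whittaker character on the $GL_1^{n-m}$ block, so no integration against it is a constant term in the sense needed for cuspidality.

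The paper's route is different and is what makes the condition $m\ge 2$ transparent. One uses Ikeda \cite{I1} to realize $\overline{\theta_{2m}^{(2),\psi^\delta}}$ as the residue of the Borel--type Eisenstein series $E^{(2),\psi^\delta}(\cdot,s)$ on $Sp_{2m}^{(2)}(\A)$ induced from the trivial representation of $GL_m(\A)$. If \eqref{intro2} were nonzero for some data, then the analogous integral \eqref{intro3} with $E^{(2),\psi^\delta}(h,s)$ in place of the theta would be nonzero for $\mathrm{Re}(s)$ large. Unfolding the Eisenstein series (as in \cite{G-R-S2}) produces, as an inner integration, the Whittaker coefficient of the trivial representation of $GL_m(\A)$, which vanishes exactly when $m\ge 2$. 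So the vanishing comes from the non-genericity of the inducing data, not from cuspidality of $\varphi$. This also explains cleanly why $m=1$ survives: the trivial representation of $GL_1$ is generic.
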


The proof of Proposition~\ref{classic-two}, part~\ref{c2-1},  follows from a direct calculation of the integral ${\mathcal W}_{k,\delta}(f)$
To deduce part~\ref{c2-2}, one then uses the theory of Fourier-Jacobi coefficients as established by Ikeda \cite{I1}.
Indeed, it follows from this reference that $\theta_{2m}^{(2),\psi^\delta}(h)$ is the residue at a point $s_0$ of the Eisenstein series $E^{(2),\psi^\delta}(\cdot,s)$ defined on $Sp_{2m}^{(2)}({\A})$ 
that is induced from the trivial representation of $GL_m({\A})$. (The induction depends on a Weil factor that in turn depends on the choice of additive character $\psi^\delta$.) 
Hence if \eqref{intro2} is not zero for some choice of data, then the integral
\begin{multline}\label{intro3}
\int\limits_{Sp_{2m}(F)\backslash Sp_{2m}({\A})}
\int\limits_{U_{n,n-m}(F)\backslash U_{n,n-m}({\A})}
\varphi\left(u\begin{pmatrix}I_{n-m}&&\\&h&\\&&I_{n-m}\end{pmatrix}\right)\theta_{2m}^{(2),\psi^\delta}(l(u)h)\overline{
E^{(2),\psi^\delta}(h,s)}\\
\psi_{U_{n,n-m}}(u)\,du\,dh
\end{multline}
is not zero for $\text{Re}(s)$ large. However, one may unfold this integral as in \cite{G-R-S2}, and doing so, one obtains an integral with
the Whittaker coefficient of the identity representation of $GL_m({\A})$ as an inner integration. Since this coefficient is zero if $m\ge 2$, part~\ref{c2-2} follows.

The proof of Theorem~\ref{theorem-general} will be given by generalizing Propositions~\ref{classic-one} and \ref{classic-two} to the case of general $r$.  To generalize Proposition~\ref{classic-one} we will show
\begin{proposition}\label{general-one}
Let $r\geq1$ be an odd integer.  Then the following statements hold.
\begin{enumerate}
\item{[General-1]}\label{g3-1}
Suppose that $k\ge n+\frac{r+3}{2}$. Then ${\mathcal W}_{k,\delta}(f)$ is zero for all choices of data and all $\delta$.
\item{[General-2]}\label{g3-2}
Suppose that the Orbit Conjecture is true.  
When $k=n+\frac{r+1}{2}$, the coefficient ${\mathcal W}_{k,\delta}(f)$ is not zero for some choice of data if and only if $\pi^{(r)}$ is a generic representation with respect to $\delta$.
\end{enumerate}
\end{proposition}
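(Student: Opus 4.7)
The plan is to extend to general odd $r$ the direct unfolding argument that proves Proposition~\ref{classic-one}. Starting from the description of $\sigma_{n,k}^{(r)}$ via \eqref{lift2}, I express $f(h)$ as the pairing of a cusp form $\varphi \in \pi^{(r)}$ against an integral kernel formed from the tensor product of a classical Weil theta and the small theta representation $\Theta^{(r)}$ on the appropriate metaplectic cover. Substituting into ${\mathcal W}_{k,\delta}(f)$ and interchanging the order of integration moves the Whittaker integration over the maximal unipotent of $SO_{2k}$ inside, producing an inner Fourier coefficient of the kernel attached to the generic character associated with $\delta$. The strategy from here is to unfold each theta factor along that unipotent, apply successive root exchanges to transfer unipotent variables from the theta factors onto the cusp form $\varphi$, and then reinterpret what remains as a Fourier coefficient of $\varphi$ paired with Fourier coefficients of the two theta factors.

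For part~\ref{g3-1}, the decisive input is a dimension count: when $k \ge n + \frac{r+3}{2}$ the number of $SO_{2k}$-unipotent variables exceeds what the two theta factors can absorb, so after all feasible root exchanges the residual integration against $\varphi$ contains the full unipotent radical of a proper parabolic subgroup of $Sp_{2n}$, with the induced character trivial along at least one simple root direction. Cuspidality of $\varphi$ then forces ${\mathcal W}_{k,\delta}(f)$ to vanish identically.

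For part~\ref{g3-2}, at the boundary value $k = n + \frac{r+1}{2}$ the root exchanges exhaust exactly, and the inner integration decouples as the product of a $\psi^{\delta}$-Whittaker coefficient of $\pi^{(r)}$ and a specific Fourier coefficient of $\Theta^{(r)}_{r-1}$ on $Sp_{r-1}^{(r)}(\A)$. The Orbit Conjecture (Conjecture~\ref{conj1}) asserts that this latter Fourier coefficient is nonzero for some choice of data, so the nonvanishing of ${\mathcal W}_{k,\delta}(f)$ is equivalent to that of the $\psi^{\delta}$-Whittaker coefficient of $\pi^{(r)}$, i.e., to genericity of $\pi^{(r)}$ with respect to $\delta$. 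The principal obstacle is the bookkeeping of the root exchanges when two metaplectic theta factors appear simultaneously in the kernel: one must track how the characters on the exchanged unipotent roots transform, verify that the cocycles of the double cover and of the $r$-fold cover combine correctly on the relevant subgroups, and finally identify the residual Fourier coefficient of $\Theta^{(r)}_{r-1}$ with the one controlled by the Orbit Conjecture. The precise threshold $k = n + \frac{r+1}{2}$ will emerge from this accounting as the unique value at which the counts of unipotent directions on the two sides balance.
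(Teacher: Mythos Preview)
Your high-level strategy---unfold the Weil theta, perform root exchanges, and reduce to a product of a Whittaker coefficient of $\pi^{(r)}$ with a Fourier coefficient of $\Theta^{(r)}_{r-1}$---is the correct architecture and matches the paper. But the proposal omits a mechanism that the paper singles out as the \emph{new ingredient} beyond the classical case: Fourier expansion along a Heisenberg-type unipotent subgroup of the large symplectic group, with orbit analysis under a smaller $Sp_{2n-2j}(F)$. Root exchange alone does not carry you from the initial unfolded expression to the endpoint; the paper organizes the reduction as a two-stage induction (integrals $L(j)$ in Section~\ref{whcoeff}, then integrals $\mathcal{L}(j)$ in Section~\ref{case2}), and the passage $\mathcal{L}(j)\leadsto\mathcal{L}(j+1)$ requires expanding along a group $S_j\cong\mathcal{H}_{2n-2j+1}$, splitting into trivial and nontrivial orbits, and treating each separately. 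For $j\ge 1$ the trivial orbit is killed by cuspidality, but for $j=0$ a further subsidiary induction (the integrals $K(j)$) is needed, ultimately invoking the vanishing of Fourier coefficients of $\Theta^{(r)}$ attached to orbits exceeding $\mathcal{O}_c$ (Theorem~\ref{theta05}, part~\ref{theta05-1}).

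This same vanishing input is also essential at the terminal step for part~\ref{g3-1}: in $\mathcal{L}(n)$ one expands along $S_n$, and the \emph{nontrivial} orbit contributions are Fourier coefficients of $\theta^{(r)}$ attached to the orbit $((r+1)1^{a_n-r-1})$, which vanish by Theorem~\ref{theta05}. Only the trivial orbit is killed by cuspidality. Your account (``after all feasible root exchanges the residual integration against $\varphi$ contains the full unipotent radical of a proper parabolic \ldots\ cuspidality then forces vanishing'') captures only the second of these two mechanisms. Without the smallness of $\Theta^{(r)}$ to eliminate the nontrivial orbit terms---and without the Fourier expansion that produces them in the first place---the argument does not close. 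For part~\ref{g3-2} your endpoint is exactly right (see Proposition~\ref{prop7}), but reaching it again passes through the same inductive machinery plus a further sequence of Weyl conjugations and a constant-term reduction along the $GL_r\times Sp_{a''}$ parabolic using \cite{F-G1}, Proposition~1.
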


Proposition~\ref{prop7} below formulates the dependence of the genericity
on $\delta$ precisely, and gives an unconditional statement as well. The second step is to generalize Proposition~\ref{classic-two}.    We will show
\begin{proposition}\label{general-two}
Let $r\geq1$ be an odd integer.  Suppose that the Descent Conjecture holds.  
Then the following statements hold.
\begin{enumerate}
\item{[General-3]} \label{g4-1}
Suppose that $2\le k\le \frac{r+1}{2}$. Then the coefficient ${\mathcal W}_{k,\delta}(f)$ is not zero for some choice of data if and only if there exists a choice of data such that the period integral
\begin{multline}\label{intro4}
\int\limits_{Sp_{2n}(F)\backslash Sp_{2n}({\A})}\ \  
\int\limits_{U_{n+\frac{r+1}{2}-k,\frac{r+1}{2}-k}(F)\backslash 
U_{n+\frac{r+1}{2}-k,\frac{r+1}{2}-k}({\A})}
\overline{\varphi^{(r)}}(h)\\ \theta_{2n}^{(2),\psi^\delta}(l_{2n}(u)h) \theta_{2n+r-2k+1}^{(2r),\psi^\delta}\left(u\begin{pmatrix}I_{\tfrac{r+1}{2}-k}&&\\&h&\\&&I_{\tfrac{r+1}{2}-k}\end{pmatrix}\right)
\psi_{U_{n+\frac{r+1}{2}-k,\frac{r+1}{2}-k}}(u)\,du\,dh
\end{multline}
is not zero. Here the function $\theta_{2n+r-2k+1}^{(2r),\psi^\delta}$ is in the space of the theta representation $\Theta_{2n+r-2k+1}^{(2r),\psi^\delta}$ that is obtained from the residues of 
Eisenstein series on $Sp_{2n+r-2k+1}^{(2r)}(\A)$.
The remaining notation will be defined below. 
\smallskip

Suppose instead that $\frac{r+3}{2}\le k\le n+\frac{r-1}{2}$. Then the coefficient ${\mathcal W}_{k,\delta}(f)$ is not zero for some choice of data if and only there exists a choice of data such that 
the period integral
\begin{multline}\label{intro5}
\int\limits_{Sp_{2n-2k+r+1}(F)\backslash Sp_{2n-2k+r+1}({\A})}\ \  
\int\limits_{U_{n,k-\frac{r+1}{2}}(F)\backslash 
U_{n,k-\frac{r+1}{2}}({\A})}
\overline{\varphi^{(r)}}\left(u\begin{pmatrix}I_{k-\tfrac{r+1}{2}}&&\\&h&\\&&I_{k-\tfrac{r+1}{2}}\end{pmatrix}\right)\\ 
\theta_{2n+r-2k+1}^{(2),\psi^\delta}(l_{2n+r-2k+1}(u)h) \theta_{2n+r-2k+1}^{(2r),\psi^\delta}(h)\psi_{U_{n,k-\frac{r+1}{2}}}(u)\,du\,dh
\end{multline}
is not zero.  
\item{[General-4]} \label{g4-2}
Suppose that $k\le n-\frac{r+1}{2}$. Then the representation  $\sigma_{n,k}^{(r)}$ is not generic.
\end{enumerate}
\end{proposition}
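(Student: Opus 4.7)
The plan is to mirror the proof of Proposition~\ref{classic-two}, adapted to the generalized lift of \cite{F-G3}.  Recall that by equation \eqref{lift2}, a vector $f \in \sigma_{n,k}^{(r)}$ is built by pairing $\varphi^{(r)} \in \pi^{(r)}$ against a tensor product of a classical Weil theta on a double cover and a residual theta $\Theta^{(2r)}$ on a $2r$-fold cover.  Consequently, ${\mathcal W}_{k,\delta}(f)$ should unfold into an integral in which $\varphi^{(r)}$ is multiplied by two theta series whose combined cocycles cancel, producing a well-defined function on the linear group $Sp_{2n}(\A)$.

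For Part~\ref{g4-1}, I would compute ${\mathcal W}_{k,\delta}(f)$ directly: insert the definition of $f$ into the Whittaker integral \eqref{wh1}, collapse the unipotent integrations from the Whittaker character against those already appearing in the construction of $f$, and then carry out a sequence of Fourier expansions and root exchanges on the large unipotent radical of the ambient symplectic group in which the dual pair is embedded.  These manipulations are of the type used in \cite{G-R-S1,G-R-S2} but must be performed carefully on the metaplectic cover.  The dichotomy between \eqref{intro4} and \eqref{intro5} corresponds to which of the two theta representations lives on the larger symplectic group: when $k \le \frac{r+1}{2}$ the $\Theta^{(2r)}$-factor sits on $Sp_{2n+r-2k+1}^{(2r)}$ and the outer $h$-integration remains over $Sp_{2n}$ against $\varphi^{(r)}(h)$; when $k \ge \frac{r+3}{2}$ the roles reverse and the $h$-integration reduces onto the smaller block $Sp_{2n-2k+r+1}$, with $\varphi^{(r)}$ embedded block-diagonally.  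In either case the Whittaker character $\psi_{U_{\cdot,\cdot}}$ is produced residually from the unipotent integrations.

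For Part~\ref{g4-2}, suppose $k \le n - \frac{r+1}{2}$.  I would assume the relevant period (\eqref{intro4} or \eqref{intro5}, according to the sub-case) is nonzero and derive a contradiction.  The Descent Conjecture identifies $\theta^{(2r),\psi^\delta}_{2n+r-2k+1}$ as the residue at a suitable point of an Eisenstein series on $Sp_{2n+r-2k+1}^{(2r)}(\A)$ induced from a parabolic whose Levi has a $GL_a^{(2r)}$ factor, with inducing data whose Whittaker model on $GL_a$ is essentially trivial.  Replacing the theta by this Eisenstein series for $\text{Re}(s)$ large and unfolding along the Bruhat cell yields, as inner integration, a Whittaker coefficient of the inducing data on $GL_a$.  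A dimension count forces $a \ge 2$ under the hypothesis $k \le n - \frac{r+1}{2}$, so this inner coefficient vanishes identically, and hence so does the period.  By Part~\ref{g4-1}, ${\mathcal W}_{k,\delta}(f)$ then vanishes for all data, establishing the non-genericity of $\sigma_{n,k}^{(r)}$.

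The main obstacle is the execution of Part~\ref{g4-1}.  First, the cocycle bookkeeping: one must verify that the product $\theta^{(2),\psi^\delta} \cdot \theta^{(2r),\psi^\delta}$ emerging after all unfoldings is indeed linear on $Sp_{2n}(\A)$, which requires carefully tracking how the $\mu_2$ and $\mu_{2r}$ cocycles interact through the unipotent conjugations and through the embedding of the dual pair.  Second, the root-exchange sequence is considerably longer than in the classical case because the tensor-product structure creates additional pairs of opposite roots that must be exchanged in a specific order to preserve convergence and to keep both theta functions present in the integrand until the final Whittaker character crystallizes.
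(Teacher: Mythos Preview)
Your proposal rests on a misreading of the lift.  In equation \eqref{lift2} the second theta is $\theta_{2n+2k(r-1)}^{(r)}$, a residual theta on the \emph{$r$-fold} cover, not on the $2r$-fold cover.  The function $\theta_{2n+r-2k+1}^{(2r),\psi^\delta}$ appearing in the target periods \eqref{intro4} and \eqref{intro5} is not present in the construction of $f$ at all; it is produced in the course of the proof by applying a Fourier--Jacobi descent to $\Theta^{(r)}$.  Concretely, after a long sequence of root exchanges (Lemmas~\ref{lem1}, \ref{lem2}, \ref{lem4}) one isolates a Fourier coefficient of $\Theta_{2(n+r-k)}^{(r)}$ attached to the orbit $((r-1)1^{2n-2k+r+1})$; Ikeda's theory then writes this as $\overline{\theta^{(2),\psi^\delta}}\cdot f(m)$ with $f$ in the descent module $\rho$ of \eqref{whca200}, and the Descent Conjecture is exactly the statement that $\rho$ consists of vectors in $\Theta^{(2r)}_{2n+r-2k+1}$.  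So the Descent Conjecture is already essential for Part~\ref{g4-1}, not only for Part~\ref{g4-2}; without it you cannot name the second factor in the period as a $\Theta^{(2r)}$ vector.

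Two further points.  First, the dichotomy between \eqref{intro4} and \eqref{intro5} is not about ``which theta lives on the larger group'' but about whether $\min(k-2,r_1-1)$ equals $k-2$ or $r_1-1$.  In the second range a new induction $\mathcal{L}(j)$ is required (Lemma~\ref{lem4}), driven by Fourier expansions along unipotents of $Sp_{2n}$ and the cuspidality of $\varphi^{(r)}$; your sketch omits this mechanism entirely.  Second, in Part~\ref{g4-2} the Eisenstein series one substitutes is induced from the theta representation $\Theta_{GL_{n+r_1-k+1}}^{(r)}$ on the metaplectic $GL$, and the vanishing comes from Kazhdan--Patterson: this representation is non-generic precisely when $n+r_1-k+1>r$, i.e.\ when $k\le n-\tfrac{r+1}{2}$.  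The condition is not ``$a\ge 2$'' as in the classical case; for $r>1$ the inducing data is already non-trivial, and the relevant threshold is $a>r$.
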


In the integrals above, the unipotent groups embed in their covers by means of the trivial section $u\mapsto (u,1)$.
The integrands are independent of the central subgroup as a function of $h$, so integration over the adelic quotients of the symplectic groups makes sense.
(In greater detail, we integrate in $h$ over the full $2r$-fold cover of the indicated symplectic group and in evaluating $\varphi^{(r)}$ and $\theta^{(2)}$ we first project $h$
from the $2r$-fold cover to the $r$-fold and $2$-fold covers, resp., as in \cite{F-G3}, Section~3.  But as the integrand is ultimately independent of $\mu_{2r}$ it is more natural to regard
it as simply a function on the group.). Also, the theta representation $\Theta_{2n+r-2k+1}^{(2r),\psi^\delta}$ is defined in \cite{F-G1}, pg.\ 93; the dependence on the additive character is 
explained in \cite{F-G2}, pg.\ 1926.

Though this work focusses on global genericity, 
one may also study genericity in the context of the local generalized theta correspondence  (\cite{F-G3}, Section 6). 
See for example Baki\'c \cite{Bak} for the classical theta correspondence. We remark that some of the methods
developed here could also be transported to the local situation. 

We now describe the proofs in brief and also the structure of this paper.
In Section~\ref{basic} we introduce the groups of concern, and discuss the Fourier coefficients of the theta representation.
We state Theorem~\ref{theta05} (proved in \cite{F-G3}) that gives information about these coefficients, and 
the Orbit Conjecture that is used in the sequel. 
Section~\ref{gend-theta-lift} describes the construction of the generalized theta lift of \cite{F-G3}.
Then in Section~\ref{whcoeff} we introduce the family of Whittaker coefficients that are to be studied, and compute them for $f\in\sigma_{n,k}^{(r)}$.  To do so, we unfold
the classical theta function and then use root exchange, as formulated
in Section 7 of Ginzburg, Rallis and Soudry \cite{G-R-S4}, extensively.  We also use the vanishing of the Fourier coefficients for the theta representation
that are attached to certain unipotent orbits (Theorem~\ref{theta05}, part~\ref{theta05-1}).
This allows us to establish an inductive process that terminates in the integral $L(j)$ given by \eqref{wh4}
with $j=\min(k-2,(r-3)/2)$. We then analyze each case for this minimum.

In Section~\ref{case1}, the case that $\min(k-2,(r-3)/2)=k$ (i.e., $k\leq(r+1)/2$) is studied; note that this case does not arise in the classical theta correspondence.
Here we use further root exchanges and the results of Ikeda mentioned above to arrive at the period \eqref{intro4}. The case that $\min(k-2,(r-3)/2)=(r-3)/2$ (i.e., $k>(r+1)/2$) is studied
in Section~\ref{case2}.  The analysis here is roughly similar but uses additional Fourier expansions over certain unipotent subgroups of symplectic groups.
We conclude that in this case the Whittaker coefficient $\mathcal{W}_{k,\delta}(f)$ is nonzero if and only the integral $\mathcal{L}(j)$ given by \eqref{end1} is nonzero when 
$j=\min(n,k-(r+1)/2)$.   This allows us to prove Proposition~\ref{general-one}, part~\ref{g3-1}, using once again the vanishing of the Fourier coefficients of the theta representation  
(Theorem~\ref{theta05}, part~\ref{theta05-1}), and Proposition~\ref{general-one}, part~\ref{g3-2} after using
the non-vanishing that is the Orbit Conjecture. 
It also allows us to establish Proposition~\ref{general-two}, part~\ref{g4-1}, assuming the Descent Conjecture 
(whose discussion is deferred to Section~\ref{4G}).
The concluding Section~\ref{4G} discusses the Descent Conjecture and the relation of this conjecture to a strong multiplicity one statement, establishes Proposition~\ref{general-two}, part~\ref{g4-2} under it, 
and explains how we can show 
part~\ref{g4-2} without the Descent Conjecture by imposing an additional condition on $\pi^{(r)}$ (Proposition~\ref{weaker-prop-4}).

\section{Notation, Fourier Coefficients of the Theta Representation}\label{basic}
In this section we fix some notations, and then discuss the Fourier coefficients of the theta representation.
Let $\Mat_{a\times b}$ be the algebraic group of all matrices of size $a\times b$ and write simply $\Mat_a$ for $\Mat_{a\times a}$.
For $m\geq1$ let 
 $J_m\in \Mat_m$ be the matrix 
 $$J_m=\begin{pmatrix}&&1\\&\udots&\\1&&\end{pmatrix}.$$
 
 Let $Sp_{2m}$ denote the symplectic group
$$Sp_{2m}=\left\{g\in GL_{2m}~\mid~ ^t\!g \begin{pmatrix}0&J_m\\-J_m&0\end{pmatrix} g=\begin{pmatrix}0&J_m\\-J_m&0\end{pmatrix}\right\}$$
and for $k\geq2$ let $SO_k$ denote the split special orthogonal group
$$SO_k=\left\{ g\in GL_k~ \mid ~^t\!g\, J_k\,g=J_k\right\}.$$
Let ${\mathcal H}_{2l+1}$ be the Heisenberg group in $2l+1$ variables, realized as in \cite{F-G3} as all elements of the form $(X,Y,z)$ where $X,Y\in \Mat_{1\times l}$ and $z\in \Mat_1$. 
This group may be embedded into $Sp_{2l+2}$ as in \cite{F-G3}, p.\ 1537.
We also define
$\Mat_a^0=\{ Z\in \Mat_{a}\ :\ Z^tJ_{a}=J_{a}Z\}$ and $\Mat_a^{00}=\{ Z\in \Mat_{a}\ :\ Z^tJ_{a}=-J_{a}Z\}$.

Let $e_{i,j}$ denote the square matrix, whose size will be clear by context, with $(i,j)$-th entry $1$ and all other entries $0$. When we work with
$Sp_{2l}$, we let $e_{i,j}'=e_{i,j}\pm e_{2l-j+1,2l-i+1}$ with 
the sign chosen so that $e_{i,j}'\in Sp_{2l}$.
When we work with $SO_{2k}$ we let
$e_{i,j}'=e_{i,j}- e_{2k-j+1,2k-i+1}$ for all choices of $(i,j)$.  

Let $F$ be a global field and ${\A}$ denote its ring of adeles. 
Let $\Theta_{2l}^{(2)}$ denote the theta representation on the group ${\mathcal H}_{2l+1}({\A})\rtimes Sp_{2l}^{(2)}({\A})$; this representation 
depends on a choice of a nontrivial additive character of $F\backslash {\A}$. See \cite{G-R-S2}, Section 1, part {\bf 6}, for the definition and the action of the Weil representation.  
Fix $r\geq1$ odd and suppose that $F$ contains a full group of $r$-th roots of unity, $\mu_{r}$, and fix an embedding $\epsilon:\mu_r\hookrightarrow \C^\times$. 
Let $G$ denote one of the groups $Sp_{2l}$ or $SO_{2k}$, and $G^{(r)}({\A})$ the $r$-fold metaplectic covering group of $G(\A)$. 
We work with functions
and representations that are genuine with respect to $\epsilon$.
Let $\Theta_{2l}^{(r)}$ denote the theta representation on $Sp_{2l}^{(r)}({\A})$.  The functions in this space are obtained as residues of Eisenstein series on $Sp_{2l}^{(r)}({\A})$.
For more details, see \cite{F-G1} Section 2, where we give basic properties of these representations. These are similar to the properties of theta 
representations on covers of the general linear group, treated in \cite{K-P}. 

To each unipotent orbit of the group $Sp_{2l}$ one may associate a set of Fourier coefficients, as described in \cite{G}.
Unipotent groups lift canonically to a central extension (\cite{M-W}, Appendix I), so one may also consider Fourier coefficients of functions defined on any cover of $Sp_{2l}$.
 Given an automorphic representation $\pi$
we let $\mathcal{O}(\pi)$ denote the set of unipotent orbits $\mathcal{O}$ that are maximal with respect to the property that there is a function $f\in \pi$ that has a nonzero Fourier
coefficient for $\mathcal{O}$.  The unipotent orbits are indexed by partitions of $2l$ such that every odd part has even multiplicity (see \cite{C-M}).
In this paper we are 
mainly interested in the set ${\mathcal O}(\Theta_{2l}^{(r)})$. 

The structure of the set  ${\mathcal O}(\Theta_{2l}^{(r)})$ is described in \cite{F-G3}, Section 4.
Let $r>1$ be an odd integer, and write $2l=\alpha r+\beta$ where $0\le \beta< r$.
Let ${\mathcal O}_c(\Theta_{2l}^{(r)})$ denote the unipotent orbit
$${\mathcal O}_c(\Theta_{2l}^{(r)})=\begin{cases}(r^\alpha \beta)&\text{if $\alpha$ is even}\\
(r^{\alpha -1}(r-1)(\beta+1))&\text{if $\alpha$ is odd.}\end{cases}$$
 Conjecture~1 of \cite{F-G3} (also made in \cite{F-G1}) states that the set ${\mathcal O}(\Theta_{2l}^{(r)})$ is a singleton, and is precisely the set
$\{{\mathcal O}_c(\Theta_{2l}^{(r)})\}$.  That is, all Fourier coefficients of functions in $\Theta_{2l}^{(r)}$ which are attached to unipotent orbits that are greater than or not comparable to
${\mathcal O}_c(\Theta_{2l}^{(r)})$ are zero, and for some function in $\Theta_{2l}^{(r)}$ there is a nonzero coefficient attached to this orbit.
We mention that the local analogue of this conjecture has been extended to other groups and described conceptually by Gao and Tsai \cite{G-T}.

In \cite{F-G3}, we establish the following result concerning the unipotent orbit of $\Theta_{2l}^{(r)}$.

\begin{theorem}\label{theta05}
\begin{enumerate}
\item\label{theta05-1}
For all positive integers $l$, if $\mathcal{O}\in {\mathcal O}(\Theta_{2l}^{(r)})$, then $\mathcal{O}\le {\mathcal O}_c(\Theta_{2l}^{(r)})$.
\item \label{theta05-2} Assume that $l=0,1,2,r-3,r-2,r-1$. Let $n$ denote a non-negative integer, and assume that if $l=0$, then $n\ge 1$. 
Then ${\mathcal O}(\Theta_{2(l+nr)}^{(r)})=\{{\mathcal O}_c(\Theta_{2(l+nr)}^{(r)})\}$.
In particular, if $r=3$ or $r=5$ then ${\mathcal O}(\Theta_{2l}^{(r)})=\{{\mathcal O}_c(\Theta_{2l}^{(r)})\}$ holds for all $l$.
\end{enumerate}
\end{theorem}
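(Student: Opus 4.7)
The plan is to prove the theorem in two stages: first the upper bound (part~1), which is a vanishing statement for all Fourier coefficients whose attached partition strictly dominates (or is incomparable to but lies above) $\mathcal{O}_c(\Theta_{2l}^{(r)})$, and then the exact equality (part~2) by producing a specific nonvanishing coefficient in each listed base case and propagating by induction on $n$.

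For part~(1), the strategy is to realize each function in $\Theta_{2l}^{(r)}$ as a residue, at an explicit point, of a degenerate Eisenstein series on $Sp_{2l}^{(r)}(\A)$ induced from a suitable maximal (or near-maximal) parabolic whose Levi factor, depending on the parity of $\alpha$, is a product of $GL_r^{(r)}$ blocks together with a lower-rank metaplectic symplectic factor. Given a candidate orbit $\mathcal{O}$ whose partition strictly dominates $\mathcal{O}_c(\Theta_{2l}^{(r)})$, I would attach the standard unipotent subgroup $U_{\mathcal{O}}$ and generic character $\psi_{\mathcal{O}}$ as in \cite{G}, and expand the Fourier coefficient of the residue against $\psi_{\mathcal{O}}$. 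Iterated applications of root exchange in the style of \cite{G-R-S4}, Section~7, reduce the integral to an inner Whittaker-type integral on the Levi. The vanishing then follows from two inputs: the known structure of theta representations on metaplectic covers of $GL$ from \cite{K-P} (whose Whittaker coefficients are supported only on specific partitions that are ``bounded'' by the Kazhdan--Patterson type data), and an analysis of the pole of the Eisenstein series, which restricts the possible inducing characters. Since $\mathcal{O}$ strictly dominates $\mathcal{O}_c(\Theta_{2l}^{(r)})$, one of these reductions forces the inner integral to vanish, giving the claim.

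For part~(2), the listed values $l\in\{0,1,2,r-3,r-2,r-1\}$ are exactly those for which $\Theta_{2l}^{(r)}$ itself is trivial, the minimal representation, or another well-studied small residue whose orbit $\mathcal{O}_c$ is directly computable. Fix such an $l$ and induct on $n$. For the inductive step, one expands the Fourier coefficient attached to $\mathcal{O}_c(\Theta_{2(l+nr)}^{(r)})$ along the unipotent radical of the maximal parabolic with Levi $GL_r^{(r)}\times Sp_{2(l+(n-1)r)}^{(r)}$; the constant term formula for the residual Eisenstein series reduces nonvanishing to that of the analogous Fourier coefficient for $\Theta_{2(l+(n-1)r)}^{(r)}$, combined with a routine computation of a generic character integral on $GL_r$. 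The base case $n=0$ (or $n=1$ when $l=0$) is handled by the explicit description of $\Theta_{2l}^{(r)}$ for the particular small $l$, using for instance the classical case when $l\le 2$ and direct residue computations when $l\in\{r-3,r-2,r-1\}$. The final claim about $r=3,5$ follows because then the listed values of $l$ exhaust a complete set of residues modulo $r$.

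The main obstacle will be the base cases $l\in\{r-3,r-2,r-1\}$, where the theta representation is neither trivial nor the minimal representation and so showing that $\mathcal{O}_c$ is actually attained (rather than merely an upper bound from part~(1)) demands a direct calculation of the relevant residues of degenerate Eisenstein series. The most delicate piece is the $\alpha$-odd regime, in which $\mathcal{O}_c$ contains an $(r-1)$-column paired with a $(\beta+1)$-column and one must verify that a subtle cancellation between the contributions of these two columns does not destroy nonvanishing. A secondary technical challenge in part~(1) is ensuring that each root exchange is legitimate, which comes down to checking character compatibility on the exchanged unipotent subgroups; this is precisely where the hypothesis that $\mathcal{O}$ strictly dominates $\mathcal{O}_c$ enters decisively.
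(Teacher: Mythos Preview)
The paper does not contain a proof of this theorem: it is stated as a result established in \cite{F-G3}, Section~4, and is simply quoted here for use in the subsequent computations. There is therefore no proof in the present paper against which your proposal can be compared.

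That said, your outline is broadly in the spirit of how such results are obtained in the literature: the construction of $\Theta_{2l}^{(r)}$ as a residue of Eisenstein series, the use of the constant-term expansion along parabolics with Levi $GL_r\times Sp$ (as in \cite{F-G1}, Proposition~1, which is invoked elsewhere in this paper), and the input from \cite{K-P} on theta representations of $GL_r^{(r)}$ are indeed the standard tools. However, what you have written is only a strategic sketch, not a proof. In particular, your treatment of part~(1) asserts that ``one of these reductions forces the inner integral to vanish'' without indicating which reduction applies to which orbit, and the actual argument requires a careful case analysis over all partitions dominating $\mathcal{O}_c$. Likewise, your handling of the base cases in part~(2) defers the genuinely hard work (the cases $l\in\{r-3,r-2,r-1\}$) to ``direct residue computations'' without carrying any of them out; you yourself flag this as the main obstacle, and it is. If you intend this as an actual proof rather than a plan, those computations must be supplied.
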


Thus the vanishing properties of Fourier coefficients implied by \cite{F-G3}, Conjecture~1, are known, 
but the non-vanishing properties are known in full only for $r=3,5$.  
We do not need this non-vanishing in general, but only in the case $2l=r-1$, but there to give the optimal results we need more.  In that case, \cite{F-G3}, Conjecture~1 asserts that
$\Theta_{r-1}^{(r)}$ is generic, i.e.\ that there is some function in the space of $\Theta_{r-1}^{(r)}$ that has a nonvanishing Whittaker coefficient with respect to some Whittaker character of 
$Sp_{r-1}$.  Such characters are given by 
$$e_{\delta}(u)=\psi(u_{1,2}+\dots+u_{r_1-1,r_1}+\delta u_{r_1,r_1+1})$$
with $r_1=(r-1)/2$ and $\delta\in F^*$. 
We conjecture this nonvanishing for every class of Whittaker characters modulo the action of the rational split torus.

\begin{conjecture}[Orbit Conjecture]\label{conj1} For each class in $F^*/(F^*)^2$, there is a representative $\delta\in F^*$ and a function $\theta$ in $\Theta_{r-1}^{(r)}$ 
such that the Whittaker integral of $\theta$ with respect to the character $e_{\delta}$ is nonzero.
\end{conjecture}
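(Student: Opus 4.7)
The plan is to realize $\Theta_{r-1}^{(r)}$ as the multi-residue of a degenerate Eisenstein series on $Sp_{r-1}^{(r)}(\A)$ induced from the Borel (or Siegel) parabolic with a genuine character (cf.\ \cite{F-G1}, Section~2), and then to compute its Whittaker coefficient by unfolding. Unfolding $\int_{U(F)\backslash U(\A)} E(ug,s)\,\overline{e_\delta(u)}\,du$ via the Bruhat decomposition on the Eisenstein side produces, for $\mathrm{Re}(s)$ in the convergence range, only the term attached to the longest Weyl element, and this factorizes into local Whittaker integrals. The multi-residue that cuts out $\Theta_{r-1}^{(r)}$ then yields the Whittaker coefficient of the theta representation as a product of local integrals times a global $L$-factor, nonzero at the residue point by standard analytic properties.

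At finite places $v$ not dividing $r$ where $\psi$ is unramified, the metaplectic Casselman--Shalika formula of McNamara and Chinta--Offen expresses the local Whittaker integral as an explicit rational function of the Satake parameters, and its specialization at the parameter of $\Theta_{r-1}^{(r),v}$ is nonzero independently of $\delta_v$. The remaining work is concentrated at the places dividing $r$, the finitely many places where $\psi$ is ramified, and the archimedean places. At each such bad place $v$, the local integral is an exotic Gauss sum depending on the class of $\delta_v$ modulo $(F_v^*)^r$, and its nonvanishing for some local Schwartz datum is equivalent to the local genericity of $\Theta_{r-1}^{(r),v}$ with respect to $e_{\delta,v}$.

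Given a square class $c\in F^*/(F^*)^2$, I would use an approximation argument to choose a representative $\delta\in c$ whose image $\delta_v$ at each bad place lies in a favorable coset of $(F_v^*)^r$, and then select local Schwartz data at these places to make each bad-place integral nonzero. Combined with the nonvanishing of the global $L$-factor and of the unramified local contributions, this produces a globally nonzero Whittaker coefficient of $\Theta_{r-1}^{(r)}$ against $e_\delta$.

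The main obstacle is the local genericity of $\Theta_{r-1}^{(r),v}$ at bad places for Whittaker characters in each square class. For $r=3$ Patterson's explicit cubic Gauss sum evaluation \cite{Pat} handles this, and for $r\le 5$ the regularity of the unipotent orbit from Theorem~\ref{theta05} yields the existence of some nonzero Whittaker coefficient; however, neither directly controls the $\delta$-dependence for general $r$. I would attack the general case by an inductive descent, relating the Whittaker coefficient of $\Theta_{r-1}^{(r)}$ on $Sp_{r-1}^{(r)}$ to Fourier coefficients of $\Theta_{r-3}^{(r)}$ on a smaller cover via a Fourier--Jacobi unfolding, hoping the induction terminates at a Patterson-type base case that can be computed by hand.
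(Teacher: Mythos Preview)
The statement you are attempting to prove is labeled a \emph{Conjecture} in the paper (Conjecture~\ref{conj1}), and the paper does not prove it. The authors remark only that it is known for $r=3$ over $\Q(e^{2\pi i/3})$ by Patterson's work, and that by Gao the local Whittaker functional on each nonarchimedean component of $\Theta_{r-1}^{(r)}$ exists and is unique; they then \emph{assume} the conjecture in order to obtain the sharpest form of their main theorem (and give an unconditional but weaker statement in Proposition~\ref{prop7}). So there is no proof in the paper to compare against, and your proposal should be read as a strategy toward an open problem rather than as a reconstruction of a known argument.

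As a strategy, your outline has a genuine gap at exactly the place you flag. The unfolding of the Eisenstein Whittaker integral and the unramified Casselman--Shalika computation are fine, and Gao's result already gives local genericity at good places. But the step ``select local Schwartz data at bad places to make each bad-place integral nonzero'' is not available here: functions in $\Theta_{r-1}^{(r)}$ are multi-residues of Borel Eisenstein series, not theta kernels built from Schwartz functions, so the only freedom is in the choice of section in the induced representation, and the residue process may annihilate precisely those sections whose local Whittaker integrals are nonzero for a given $\delta$. Passing from ``the local Whittaker functional is nonzero for each $\delta_v$'' (which is essentially Gao) to ``the global Whittaker coefficient of the residue is nonzero for a $\delta$ in each square class'' is the entire content of the conjecture, and neither your approximation argument nor the proposed inductive Fourier--Jacobi descent to $\Theta_{r-3}^{(r)}$ closes this gap as stated: the descent changes the cover and the rank simultaneously, and there is no evident base case beyond Patterson's $r=3$ computation. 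In short, your sketch correctly identifies the architecture one would expect a proof to have, but the decisive local-to-global step remains open, which is why the paper leaves this as a conjecture.
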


We remark that by a result of Gao \cite{Gao}, for any choice of additive character, or equivalently for any $\delta\in F^*$,
the local Whittaker functional for each nonarchimedean
local component of $\Theta_{r-1}^{(r)}$ exists and is unique up to scalars.  We also mention that for even degree covers an analogous statement is not
expected to hold (after all, the process of induction in the even cover case requires the Weil factor, which depends on a choice of additive character).

Next we describe the groups at hand in more detail, following the same notation as in \cite{F-G3}.   
All parabolic subgroups we use are standard parabolic subgroups whose unipotent radical consists of upper triangular unipotent matrices.
Let $P_{a,b,c}$ denote the parabolic subgroup of $Sp_{2(ab+c)}$ whose Levi part is $GL_{a}\times\ldots\times GL_{a}\times Sp_{2c}$ where $GL_{a}$ 
appears $b$ times. Let $U_{a,b,c}$ denotes the unipotent radical of $P_{a,b,c}$. 

We have the following matrices and subgroups of $U_{a,b,c}$.
For $1\le i\le b-1$, let
\begin{equation}\label{mat1}
u_{a,b,c}^i(X_i)=\begin{pmatrix} I_\alpha&&&&&&\\ &I_a&X_i&&&&\\ &&I_a&&&&\\ &&&I_\beta&&&\\
&&&&I_a&X_i^*&&\\ &&&&&I_a&\\ &&&&&&I_\alpha\end{pmatrix},\qquad
X_i\in \Mat_{a}.
\end{equation}
Here $\alpha=a(i-1)$,  and $\beta=2(ab+c)-2a(i+1)$.  Here and below, we use an asterisk to indicate that the entries are determined by the condition that the matrix is symplectic
(here the entries of $X_i^*$ are determined by those of $X_i$.)
We denote the subgroup consisting of all matrices $u_{a,b,c}^i(X_i)$ with $X_i\in \Mat_{a}$ by $U_{a,b,c}^i$. 
Let
\begin{equation}\label{mat2}
u_{a,c}'(Y,Z)=\begin{pmatrix} I_{2a(b-1)}&&&&\\ &I_{a}&Y&Z&\\ &&I_{2c}&Y^*&\\ &&&I_{a}&\\
&&&&I_{2a(b-1)}\end{pmatrix},\qquad Y\in \Mat_{a\times 2c}, Z\in \Mat_a^0,
\end{equation}
and let $U_{a,c}'$ be the subgroup $U_{a,b,c}$ of matrices of this form.

Each matrix $u\in U_{a,b,c}$ may be factored uniquely as
$$u=u_{a,b,c}^i(X_i)u'$$ 
with $u_{a,b,c}^i(X_i)\in U_{a,b,c}^i$ and where $u'\in U_{a,b,c}$ is such that all its $(p,j)$ entries are zero
for $\alpha+1\le p\le \alpha+a$ and  $\alpha+a+1\le j\le \alpha+2a$. We refer to $u_{a,b,c}^i(X_i)$ 
as the $i$-th coordinate of $u$. Similarly, we define the $u_{a,c}'(Y,Z)$ coordinate of $u$.
Every $u\in U_{a,b,c}$ has a  factorization of the form 
\begin{equation}\label{mat3}
u=u_{a,c}'(Y,Z)\prod_{i=1}^{b-1} u_{a,b,c}^i(X_i)u_1
\end{equation}
where $u_1\in U_{a,b,c}$ has zeroes in the first $b$ $a\times a$ blocks directly above the main diagonal and in positions $Y$ and $Z$ in \eqref{mat2} 
(and thus is also zero in the corresponding last $b$ blocks directly above the main diagonal and in $Y^*$), and
the matrices $X_i, Y$ and $Z$ are uniquely determined. Using this factorization, we define a character $\psi_{U_{a,b,c}}$ of $U_{a,b,c}(F)\backslash U_{a,b,c}({\A})$ by
$$\psi_{U_{a,b,c}}(u)=\psi(\text{tr}(X_1+\cdots +X_{b-1})).$$

The group $U_{2k,n}'$  has the structure of a generalized Heisenberg group. We make use of a homomorphism 
$l_{4kn}:U_{2k,r_1,n}\to {\mathcal H}_{4kn+1}$ given as follows. 
First, on the center 
of $U_{2k,n}'$, which consists of all matrices \eqref{mat2} such that $Y=0$, let $l_{4kn}(u_{2k,n}'(0,Z))=(0,0,\text{tr}'(Z))$. 
Here, for $Z=(Z_{i,j})\in \Mat_{2k}^0$ we write  
$$\text{tr}'(Z)=Z_{1,1}+Z_{2,2}+\ldots +Z_{k,k}.$$
Since $U_{2k,n}'$ modulo its center is isomorphic to ${\mathcal H}_{4kn+1}$ modulo its center, one may extend $l_{4kn}$ to $U_{2k,n}'$ by 
taking any isomorphism between these two quotients. We postpone a more precise description of which extension we choose until Section~\ref{whcoeff} where it is needed
(see below \eqref{wh2}). 
After defining $l_{4kn}$ on $U_{2k,n}'$ we extend it trivially to $U_{2k,r_1,n}$.  

\section{The Generalized Theta Lifting}\label{gend-theta-lift}

In this section we review the basic integral construction introduced in \cite{F-G3}, Section 1. 
We require two embeddings.  First, as in the classic theta lift, let $\iota_1:SO_{2k}\times Sp_{2n}\to Sp_{4nk}$ be the tensor product embedding.  Second, for given $r$, let $\iota_2: SO_{2k}\times Sp_{2n}\to
Sp_{2n+2k(r-1)}$ be the embedding 
\begin{equation}\label{iota-2}
\iota_2(h,g)=\diag(h,\dots,h,g,h^*,\dots,h^*)
\end{equation} where each of the matrices $h,h^*$ is repeated $r_1$ times and $h^*$ is chosen so that the matrix is in the
symplectic group.  We recall that $r_1=(r-1)/2$.  

These maps may be extended to covering groups, but this requires some care. As explained in \cite{F-G3}, Section 2, by properly choosing cocycles that realize the covering groups,
composing with the projections from 
$SO_{2k}^{(r)}(A)$ to $SO_{2k}(\A)$ and from $Sp_{2n}^{(r)}(\A)$ to $Sp_{2n}(\A)$,
and using the splitting of Kudla \cite{Ku1} and Sweet \cite{Sw}, the map $\iota_1$ may be extended to a map
$\iota_1:SO_{2k}^{(r)}(\A)\times Sp_{2n}^{(r)}(\A)\to Sp_{4nk}^{(2)}(\A)$
(we use the same notation for this extension).  Also, the map $\iota_2$ may be extended to a map
$\iota_2:SO_{2k}^{(r)}(\A)\times Sp_{2n}^{(r)}(\A)\to Sp_{2n+2k(r-1)}^{(r)}(\A).$

Let $\theta_{2n+2k(r-1)}^{(r)}$ be a vector in the space of the representation $\Theta_{2n+2k(r-1)}^{(r)}$,  and $\theta_{4kn}^{(2),\psi}$ be a vector in the space of the 
representation $\Theta_{4kn}^{(2)}$ with additive character $\psi$. 
Let $h\in SO_{2k}^{(r)}({\A})$ and $g\in Sp_{2n}^{(r)}({\A})$.
Consider the Fourier coefficient 
\begin{equation}\label{lift1}
\int\limits_{U_{2k,r_1,n}(F)\backslash U_{2k,r_1,n}({\A})}
\theta_{4kn}^{(2),\psi}(l_{4kn}(u)\iota_1(h,g))
\theta_{2n+2k(r-1)}^{(r)}(u\iota_2(h,g))\psi_{U_{2k,r_1,n}}(u)\,du.
\end{equation}
The idea is then to use this Fourier coefficient as an integral kernel.

Let $\pi^{(r)}$ denote a genuine irreducible cuspidal automorphic representation of $Sp_{2n}^{(r)}({\A})$, and 
let $\varphi^{(r)}$ be a vector in the space of $\pi^{(r)}$. Let $\sigma_{n,k}^{(r)}$ denote the representation of $SO_{2k}^{(r)}({\A})$ generated by all functions $f(h)$ given by the integrals
\begin{multline}\label{lift2}
\int\limits_{Sp_{2n}(F)\backslash Sp_{2n}^{(r)}({\A})}
\int\limits_{U_{2k,r_1,n}(F)\backslash U_{2k,r_1,n}({\A})}
\overline{\varphi^{(r)}(g)} \\
\theta_{4kn}^{(2),\psi}(l_{4kn}(u)\iota_1(h,g))
\theta_{2n+2k(r-1)}^{(r)}(u\iota_2(h,g))\psi_{U_{2k,r_1,n}}(u)\,du\,dg
\end{multline}
as $\varphi^{(r)}$, $\theta_{4kn}^{(2),\psi}$ and $\theta_{2n+2k(r-1)}^{(r)}$ vary over their representation spaces.
This defines a mapping from the set of irreducible cuspidal genuine automorphic representations of 
$Sp_{2n}^{(r)}({\A})$ to the set of genuine representations of the quotient $SO_{2k}(F)\backslash SO_{2k}^{(r)}({\A})$. 
This is the map that we shall study in the sequel.  We remark that as a function of $g=(g_1,\zeta)\in Sp_{2n}^{(r)}({\A})$, with $g_1\in. Sp_{2n}(\A)$, $\zeta\in\mu_r$,
the integrand in \eqref{lift2} is independent of $\zeta$, so the
integrand can also be pushed down to a function on the group $Sp_{2n}({\A})$ and the integration taken over the adelic quotient of this group.

\section{The Whittaker coefficients of the representation 
$\sigma_{n,k}^{(r)}$}\label{whcoeff}

In this Section we compute the Whittaker coefficients of the representation $\sigma_{n,k}^{(r)}$. 
Let $V_{2k}$ denote the maximal unipotent subgroup of $SO_{2k}$.
Let $\delta\in F^*$.  For $v=(v_{i,j})\in V_{2k}(\A)$, define
\begin{equation}\label{whit-char}\psi_{V_{2k},\delta}(v)=\psi(v_{1,2}+v_{2,3}+\ldots +\delta v_{k-1,k}+v_{k-1,k+1}).
\end{equation}
Then $\psi_{V_{2k},\delta}$ is a Whittaker character of the quotient
$V_{2k}(F)\backslash V_{2k}({\A})$. Up to conjugation by the rational split torus all generic characters of this quotient are of this form,
and in view of this action it suffices to consider $\delta\in F^*/(F^*)^2.$
Our goal in this section is to study the integrals
\begin{equation}\label{wh1}
{\mathcal W}_{k,\delta}(f)=\int\limits_{V_{2k}(F)\backslash V_{2k}({\A})}f(vh)
\psi_{V_{2k},\delta}(v)\,dv,
\end{equation}
for $f(h)$ a function in the space of the representation 
$\sigma_{n,k}^{(r)}$.

We start by fixing some notations.  
Given $u\in U_{2k,r_1,n}$, let
\begin{equation}\label{mat4}
u=u_{2k,n}'(Y,Z)\prod_{i=1}^{r_1-1} u_{2k,r_1,n}^i(X_i)u_1
\end{equation}
be its factorization as in equation \eqref{mat3}.  
Write  $Y\in \Mat_{2k\times 2n}$ as $Y=\begin{pmatrix} Y_1\\ Y_2\end{pmatrix}$ where $Y_1,Y_2\in \Mat_{k\times 2n}$. 
Let $U_{2k,r_1,n}^0$ denote the subgroup of $U_{2k,r_1,n}$ consisting of all matrices of the form \eqref{mat4} such that $Y_2=0$. 

For every $0\le j\le \min(k-2, r_1-1)$ we  define a unipotent subgroup $U_j$ of $Sp_{2(n+k(r-1)-jr)}$ as follows. Let 
$P_{2k,r_1,n}^j$ denote the standard parabolic subgroup of $Sp_{2(n+k(r-1)-jr)}$ whose Levi part is $GL_{2k-2j-1}^j\times GL_{2k-2j}^{r_1-j}\times Sp_{2n}$. 
Let $U_{2k,r_1,n}^{1,j}$ denote the standard unipotent radical of $P_{2k,r_1,n}^j$. Similarly to \eqref{mat4}, an element $u\in U_{2k,r_1,n}^{1,j}$ has a factorization 
\begin{equation}\label{mat5}
u=u_{2k-2j,n}'(Y,Z)u_{a_j,b_j,c_j,d_j}^{j}(X_j)
\prod_{\underset{i\ne j}{i=1}}^{r_1-1} u^i_{a_i,b_i,c_i,d_i}(X_i)u_1
\end{equation}
with $Y\in \Mat_{(2k-2j)\times 2n}$, $Z\in \Mat_{2k-2j}^0$, $X_j\in \Mat_{(2k-2j-1)\times (2k-2j)}$, $X_i\in \Mat_{2k-2j-1}$ if $1\le i\le j-1$, and  $X_i\in \Mat_{2k-2j}$ if $j+1\le i\le r_1-1$. 
Here for each $i$, $1\leq i\leq r_1-1$, we denote
\begin{equation}\label{mat6}
u_{a,b,c,d}^i(X_i)=\begin{pmatrix} I_a&&&&&&\\ &I_b&X_i&&&&\\ &&I_c&&&&\\ &&&I_d&&&\\
&&&&I_c&X_i^*&&\\ &&&&&I_b&\\ &&&&&&I_a\end{pmatrix}
\end{equation}
and the indices $a_i,b_i,c_i,d_i$ are given as follows.
If $1\le i\le j-1$, then  $a_i=(2k-2j-1)(i-1)$, $b_i=c_i=2k-2j-1$; 
if $i=j$ then $a_j=(2k-2j-1)(j-1)$, $b_j=2k-2j-1$, $c_j=2k-2j$;
and 
if $j+1\le i\le r_1-1$, then $a_i=(2k-2j)(i-1)-j$, $b_i=c_i=2k-2j$.
In all cases $d_i=2(n+k(r-1)-jr)-2(a_i+b_i+c_i)$.
For $Y\in \Mat_{2(k-j)\times 2n}$  write  $Y=\begin{pmatrix} Y_1\\ Y_2\end{pmatrix}$ where $Y_1,Y_2\in \Mat_{(k-j)\times 2n}$. 
We define the group $U_j$ to be the subgroup of $U_{2k,r_1,n}^{1,j}$ consisting of all matrices of the form \eqref{mat5} such that $Y_2=0$. 
Note that $U_0=U_{2k,r_1,n}^0$. 

Let $\psi_{U_j}$ denote the character of the quotient $U_j(F)\backslash U_j({\A})$ defined as follows. Write $u\in U_j$ as in equation \eqref{mat5}. Then 
\begin{equation}\label{wh2}
\psi_{U_j}(u)=
\psi\Bigg(\text{tr}_0(X_j) + \text{tr}'(Z) +
\sum_{\substack{{i=1}\\i\ne j}}^{r_1-1}\text{tr}(X_i)\Bigg).
\end{equation}
Here if $X_j[a,b]$ denotes the $(a,b)$ entry of $X_j$, then 
$$\text{tr}_0(X_j)=X_j[1,1]+X_j[2,2]+\ldots +X_j[2k-2j-1,2k-2j-1]$$ (we remind the reader that $X_j$ is not a square matrix). 

We start the computation of \eqref{wh1} by first unfolding the theta series $\theta_{4kn}^{(2),\psi}$. 
Choosing $l_{4kn}(u_{2k,n}'(Y,Z))=(Y_2,Y_1,\text{tr}'(Z))$, we deduce that the embedding $\iota_1$ of $(v,g)\in V_{2k}\times Sp_{2n}$ in $Sp_{4kn}$ 
preserves this choice of polarization. Indeed,  
the group $\iota_1(V_{2k}\times Sp_{2n})$ is contained in the maximal parabolic subgroup of $Sp_{4kn}$ whose Levi part is $GL_{2kn}$. Thus, 
\begin{align}\notag 
\theta_{4kn}^{(2),\psi}(l_{4kn}(u_{2k,n}'(Y,Z))(v,g))=&
\sum_{\xi\in \Mat_{k\times 2n}(F)}\omega_\psi(l_{4kn}(u_{2k,n}'(Y,Z))\iota_1(v,g))\phi(\xi)\\
&= \sum_{\xi\in Mat_{k\times 2n}(F)}\omega_\psi(l_{4kn}(u_{2k,n}'(\xi '+Y,Z))\iota_1(v,g))\phi(0).\notag
\end{align}
Here $\xi'=\begin{pmatrix} 0\\ \xi\end{pmatrix}$. Plugging this into integral \eqref{wh1}, 
collapsing the summation over $\xi$ with the corresponding integration over $U_{2k,r_1,n}(F)\backslash U_{2k,r_1,n}({\A})$, integral \eqref{wh1} is equal to 
\begin{multline}\notag
\int\limits_{Mat_{k\times 2n}({\A})}
\int\limits_{Sp_{2n}(F)\backslash Sp_{2n}({\A})}
\int\limits_{V_{2k}(F)\backslash V_{2k}({\A})}
\int\limits_{U_{2k,r_1,n}^0(F)\backslash U_{2k,r_1,n}^0({\A})}
\overline{\varphi^{(r)}(g)}\\ 
\omega_\psi(l_{4kn}(u)(\iota_1(v,g)l_{4kn}(u'_{2k,n}(Y',0)))\phi(0)
\theta_{2n+2k(r-1)}^{(r)}(u\iota_2(v,g)u'_{2k,n}(Y',0))\\
\psi_{U_{2k,r_1,n}^0}(u)\psi_{V_{2k},\delta}(v)\,dv\,du\,dg\,dY'.
\end{multline}
Here, we write $Y'=\begin{pmatrix} 0\\ Y_2\end{pmatrix}$.

From the action of the Weil representation (see, for example, \cite{G-R-S2} Section 1, part {\bf 6}), we have the identity
$\omega_\psi(l_{4kn}(u)\iota_1(v,g))\phi(0)=\phi(0)$.
Since $\phi$ is an arbitrary Schwartz function, we deduce that the integral \eqref{wh1} is zero for all choices of data if and only if the integral
\begin{multline}\label{wh3}
\int\limits_{Sp_{2n}(F)\backslash Sp_{2n}({\A})}
\int\limits_{V_{2k}(F)\backslash V_{2k}({\A})}
\int\limits_{U_{2k,r_1,n}^0(F)\backslash U_{2k,r_1,n}^0({\A})}
\overline{\varphi^{(r)}(g)}\\
\theta_{2n+2k(r-1)}^{(r)}(u\iota_2(v,g))\psi_{U_{2k,r_1,n}^0}(u)
\psi_{V_{2k},\delta}(v)\,du\,dv\,dg
\end{multline}
is zero for all choices of data.

For every $0\le j\le \min(k-2, r_1-1)$,  define the integral
\begin{multline}\label{wh4}
L(j)=\int\limits_{Sp_{2n}(F)\backslash Sp_{2n}({\A})}
\int\limits_{V_{2k-2j}(F)\backslash V_{2k-2j}({\A})}
\int\limits_{U_j(F)\backslash U_j({\A})}\overline{\varphi^{(r)}(g)}
\\ \theta_{2(n+k(r-1)-jr)}^{(r)}(u\iota_{3,j}(v,g))\psi_{U_j}(u)
\psi_{V_{2k-2j},\delta}(v)\,du\,dv\,dg.
\end{multline}
Here  for $v=\left(\begin{smallmatrix} v_0&B\\ &1\end{smallmatrix}\right)\in V_{2k-2j}(\A)$, $g\in Sp_{2n}(\A)$, the matrix $\iota_{3,j}(v,g)\in Sp_{2(n+k(r-1)-jr)}(\A)$ is given by
\begin{equation}\label{iota-3-j}
\iota_{3,j}(v,g)=\text{diag}(v_0,\ldots,v_0,v\ldots,v,g,v^*,\ldots,v^*,v_0^*,\ldots,v_0^*),
\end{equation}
where on the right the matrix $v$ appears $r_1-j$ times and the matrix $v_0$ appears $j$ times. 
Notice that since $v$ is an upper unipotent matrix, so is $v_0$; however, $v_0$ is not an orthogonal matrix. 
Also, $\iota_{3,0}(v,g)$ is exactly the embedding introduced in \eqref{iota-2}: $\iota_{3,0}(v,g)=\iota_2(v,g)$.
In \eqref{wh4}, we first
extend $\iota_{3,j}(\cdot,\cdot)$ to covering groups in the usual way.  That is, we observe that $V_{2k-2j}(\A)$ is unipotent, so canonically embeds in its cover by means of the trivial section, and if 
$g=(g_1,\zeta)\in Sp_{2n}^{(r)}(\A)$, $g_1\in Sp_{2n}(\A)$, $\zeta\in\mu_r$, then
we set $\iota_{3,j}(v,g)=(\iota_{3,j}(v,g_1),\zeta)\in Sp^{(r)}_{2(n+k(r-1)-jr)}(\A)$.  
Then as a function of $g=(g_1,\zeta)\in Sp_{2n}^{(r)}(\A)$, the integrand in \eqref{wh4} is independent of $\zeta\in \mu_r$, so the integrand descends to the group $Sp_{2n}(\A)$ (and is then 
integrated over this group).  
Also, $L(0)$ is equal to the integral \eqref{wh3}.

We introduce additional notation.  For any index $a$ and composition $(n_1,\dots,n_k)$ of $a$, let ${\mathcal U}_{n_1,\dots,n_k}$ be the unipotent radical of the standard
parabolic subgroup of $GL_a$ with Levi part $GL_{n_1}\times\dots\times GL_{n_k}$.  Also we write $L_a$ for the full subgroup of upper triangular unipotent matrices of $GL_a$
(so $L_a=\mathcal{U}_{1,\dots,1}$, but we use the less adorned notation).

\begin{lemma}\label{lem1}
For each $j$ with $0\le j< \min(k-2, r_1-1)$, the integral  $L(j)$ is zero for all choices of data if and only if integral $L(j+1)$ is zero for all choices of data.
\end{lemma}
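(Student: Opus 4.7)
The plan is to convert $L(j)$ into $L(j+1)$ by a sequence of Fourier expansions along certain abelian unipotent subgroups, vanishing arguments using the Fourier-coefficient bound in Theorem~\ref{theta05}, part~\ref{theta05-1}, and root exchanges in the formulation of \cite{G-R-S4}, Section~7.

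First I would isolate the outermost simple-root pieces of the Whittaker character $\psi_{V_{2k-2j},\delta}$, namely the root subgroups of $V_{2k-2j}$ whose entries are $v_{1,2}$ and its orthogonal partner, and transport them through the embedding $\iota_{3,j}$ into one set of outer rows/columns on $Sp_{2(n+k(r-1)-jr)}$. Because each of the $r_1-j$ copies of $v$ appearing along the diagonal of $\iota_{3,j}(v,g)$ contributes the same row data, the net effect is to promote one Whittaker row on the orthogonal side into $r_1-j$ identical rows on the symplectic side, which then combine with the existing unipotent $U_j$. I would next Fourier-expand along the complementary abelian unipotent subgroup whose roots lie in the support of those rows but are not yet integrated, writing the theta function as a sum of Fourier coefficients attached to various unipotent orbits of $Sp_{2(n+k(r-1)-jr)}$. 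By Theorem~\ref{theta05}, part~\ref{theta05-1}, all Fourier coefficients associated with orbits strictly greater than or incomparable to $\mathcal{O}_c(\Theta_{2(n+k(r-1)-jr)}^{(r)})$ vanish; only the Whittaker-type mode corresponding to the expected orbit below $\mathcal{O}_c$ survives, and this surviving mode identifies the theta function, restricted to its Whittaker support along one $r$-string, as a theta function on the smaller cover $Sp_{2(n+k(r-1)-(j+1)r)}^{(r)}(\A)$.

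With the degenerate modes killed, I would then apply root exchange repeatedly (in the style of \cite{G-R-S4}, Section~7) to move integrations from positions on which the character is trivial to the positions dictated by the definitions of $\psi_{U_{j+1}}$ and $\psi_{V_{2k-2(j+1)},\delta}$. Collecting everything, the resulting integral has the exact shape of $L(j+1)$: an integration over $Sp_{2n}(\A)$ against $\overline{\varphi^{(r)}}$, an integration over $V_{2k-2(j+1)}(F)\backslash V_{2k-2(j+1)}(\A)$ with character $\psi_{V_{2k-2(j+1)},\delta}$, an integration over $U_{j+1}(F)\backslash U_{j+1}(\A)$ with character $\psi_{U_{j+1}}$, and a theta in the space of $\Theta_{2(n+k(r-1)-(j+1)r)}^{(r)}$ evaluated at $u\,\iota_{3,j+1}(v,g)$. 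Every Fourier expansion and root exchange used is reversible (the unwanted modes vanish whether one expands in one direction or the other), so the equivalence of vanishing runs in both directions.

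The main obstacle is the combinatorial bookkeeping: one must verify, for each auxiliary Fourier coefficient of $\theta^{(r)}_{2(n+k(r-1)-jr)}$ that is introduced in the argument, that its support is attached to a unipotent orbit strictly greater than or incomparable to $\mathcal{O}_c$, so that Theorem~\ref{theta05} forces its vanishing; and that the root exchanges correctly transport the outermost shell of $V_{2k-2j}$, together with the $r_1-j$ copies it produces on the symplectic side, into the group $U_{j+1}$ with the character $\psi_{U_{j+1}}$ as defined in \eqref{wh2}. The hypothesis $j<\min(k-2,r_1-1)$ is precisely what guarantees that one still has a layer left to peel off on both the orthogonal side (so that $V_{2k-2(j+1)}$ still carries a nontrivial Whittaker character) and on the metaplectic theta side (so that enough copies of $v$ remain in $\iota_{3,j+1}$ for the descent to produce a bona fide smaller theta).
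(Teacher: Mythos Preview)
Your outline has the right ingredients in it---root exchange in the sense of \cite{G-R-S4} and the vanishing of large Fourier coefficients from Theorem~\ref{theta05}, part~\ref{theta05-1}---but the heart of the argument is missing, and what you have written does not actually explain how the rank of the theta function drops by~$r$.

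The sentence ``this surviving mode identifies the theta function, restricted to its Whittaker support along one $r$-string, as a theta function on the smaller cover $Sp_{2(n+k(r-1)-(j+1)r)}^{(r)}(\A)$'' is doing all the work, and it is not justified. In the paper this step is carried out as follows. After the initial root exchanges (your $M_i$/$Q_i$ stage), one conjugates by an explicit monomial element $w_0^j\in Sp_{2(n+k(r-1)-jr)}(F)$, carefully constructed so that the resulting unipotent integration can be reorganized, after a further round of root exchanges, into integration over the \emph{full} unipotent radical of the maximal parabolic of $Sp_{2(n+k(r-1)-jr)}$ with Levi $GL_r\times Sp_a$, $a=2(n+k(r-1)-(j+1)r)$, together with a Whittaker integration on the $GL_r$ factor. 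At that point one invokes \cite{F-G1}, Proposition~1, which identifies this constant term with a function in $\Theta_{GL_r}^{(r)}\otimes\Theta_a^{(r)}$; since $\Theta_{GL_r}^{(r)}$ is generic, the Whittaker integral on the $GL_r$ side is nonzero for some data, and what remains is exactly $L(j+1)$. None of this is visible in your sketch: there is no Weyl element, no recognition of the $GL_r$-parabolic constant term, and no appeal to \cite{F-G1}. The Fourier-coefficient bound of Theorem~\ref{theta05} is used in the paper, but only as a tool inside the second batch of root exchanges (to kill certain auxiliary terms), not as the mechanism that produces the smaller theta. Your proposed mechanism---``Fourier-expand along the complementary abelian subgroup and keep only the orbit below $\mathcal{O}_c$''---does not by itself yield a function in $\Theta_{2(n+k(r-1)-(j+1)r)}^{(r)}$; you would still need to explain why the surviving piece factors as (generic $GL_r$-Whittaker)$\times$(smaller theta), and that is precisely the content of the Weyl conjugation plus \cite{F-G1}, Proposition~1.
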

\begin{proof}
The first step is to perform a certain root exchange. For this notion, see \cite{G-R-S4}, Lemma 7.1.
In this process one needs to introduce two unipotent groups, and under the conditions stated in \cite{G-R-S4} Section 7.1 
one can then perform the root exchange. We will also use \cite{G-R-S4} Corollary 7.1 repeatedly to deduce that the two integrals which we study have the same vanishing properties. 

Fix $j$. We start by defining unipotent groups $M_1$ and $Q_1$ of $Sp_{2(n+k(r-1)-jr)}$.  These will 
be the first groups on which we carry out the root exchange process. 
We define $M_1$ to be the image of $\mathcal{U}_{1,2k-2j-3,1}$ inside $Sp_{2(n+k(r-1)-jr)}$ under the embedding $u_1\to \text{diag} (u_1,I,u_1^*)$ 
where $I$ is the identity matrix of size $2(n+(k-j)(r-3)-(j-1))$. 
The group $Q_1$ is defined to be the group of all matrices of the form
\begin{equation}\label{mat7}
u_{0,2k-2j-1,2k-2j-1,d_1}^1(X_1);\quad X_1=\begin{pmatrix} 0&&\\ d&0_{2k-2j-3}& \\ e&f&0\end{pmatrix},\quad  d^t, f\in \Mat_{1\times (2k-2j-3)}, e\in \Mat_1
\end{equation}
where $d_1=2(n+k(r-1)-4(2k-2j-1)-jr)$ and $0_\alpha$ denotes the zero square matrix of size $\alpha$.
Notice that $Q_1$ is a subgroup of $U_j$.

We now carry out root exchange between the groups $M_1$ and $Q_1$.
In the notation of \cite{G-R-S4} Lemma 7.1, we let $B=U_j$, $D$ denote the semi-direct product of $M_1$ and $Q_1\backslash U_j$, and $Y=Q_1$. 
We conclude that integral \eqref{wh4} is equal to
\begin{multline}\notag
\int\limits_{Q_1({\A})}\int
\int\limits_{M_1(F)\backslash M_1({\A})}\ \ \ 
\int\limits_{Q_1({\A})U_j(F)\backslash U_j({\A})}
\overline{\varphi^{(r)}(g)}\\
\theta_{2(n+k(r-1)-jr)}^{(r)}(um_1\iota_{3,j}(v,g)q_1)\psi_{j}(u)\psi_{V_{2k-2j},\delta}(v)\,du\,dm_1\,dv\,dg\,dq_1.
\end{multline}
Here, the domains of integration of the variables $v$ and $g$ are the same as for integral $L(j)$ in \eqref{wh4} above.
Applying Corollary 7.1 in \cite{G-R-S4}, we deduce that the integral
$L(j)$ is zero for all choices of data if and only if the integral
\begin{multline*}
\int
\int\limits_{M_1(F)\backslash M_1({\A})}\ \ \ 
\int\limits_{Q_1({\A})U_j(F)\backslash U_j({\A})}
\overline{\varphi^{(r)}(g)}\\
\theta_{2(n+k(r-1)-jr)}^{(r)}(um_1\iota_{3,j}(v,g))\psi_{U_j}(u)\psi_{V_{2k-2j},\delta}(v)\,du\,dm_1\,dv\,dg
\end{multline*}
is zero for all choices of data. 

For $2\le i\le r_1$ we define the subgroups $M_i$ of $Sp_{2(n+k(r-1)-jr)}$ as follows (these also depend on $j$ but we suppress this dependence).
First, for $2\le i\le j$, the group $M_i$ is the image of $\mathcal{U}_{1,2k-2j-3,1}$ in $Sp_{2(n+k(r-1)-jr)}$ under the embedding
$$u_i\to \text{diag} (I_\alpha,u_i,I_\beta,u_i^*,I_\alpha),\qquad \alpha=(2k-2j-1)(i-1)$$ with $\beta$ chosen so that the embedding is into $Sp_{2(n+k(r-1)-jr)}$.
The group $M_{j+1}$ is the image of $\mathcal{U}_{1,1,2k-2j-3,1}$  in $Sp_{2(n+k(r-1)-jr)}$
under the map
\begin{equation}\label{mat82}
u_{j+1}\to \text{diag} (I_\alpha,u_{j+1},I_\beta,u_{j+1}^*,I_\alpha), \qquad \alpha=(2k-2j-1)j.
\end{equation}
For $i$ in the range $j+2\le i\le r_1-1$, $M_i$ is the image of $\mathcal{U}_{1,2k-2j-2,1}$
under the map
$$u_i\to \text{diag} (I_\alpha,u_i,I_\beta,u_i^*,I_\alpha),\qquad \alpha=(2k-2j)(i-1)-j.$$
Finally, the group $M_{r_1}$ is the image of the group $\mathcal{U}_{1,2k-2j-1}$
under the embedding
$$u_{r_1}\to \text{diag} (I_\alpha,u_{r_1},I_\beta,u_{r_1}^*,I_\alpha)\qquad \alpha=(2k-2j)(r_1-1)-j.$$ 
In each embedding above, $\beta$ is chosen such that the resulting matrix is in $Sp_{2(n+k(r-1)-jr)}$. 

Next, for $2\le i\le r_1$ and $j$ fixed, we define groups $Q_i$. 
For $2\le  i<r_1$, $Q_i$ is the group of all matrices of the form $u_{a_i,b_i,c_i,d_i}^i(X_i)$ (see \eqref{mat6}), where $X_i$ is specified as follows. 
If $2\leq i<j$ then $X_i$ runs over all matrices of the form $X_1$ that appear in equation \eqref{mat7}. 
For $i=j,j+1$, $X_i$ consists of matrices of the following forms:
$$X_j=\begin{pmatrix} 0&&&\\ a&0_{2k-2j-3}&& \\ b&c&0&0\end{pmatrix},\quad
X_{j+1}=\begin{pmatrix} 0&&&\\ d&0_{2k-2j-3}&& \\ e&f&0&\\
g&h&s&0\end{pmatrix}$$
where $a^t,d^t,c,f,h\in \Mat_{1\times (2k-2j-3)}$ and $b,e,g,s\in \Mat_1$. For $j+1<i<r_1$,  
\begin{equation}\label{mat70}\notag
X_i=\begin{pmatrix} 0&&\\ a&0_{2k-2j-2}& \\ b&c&0\end{pmatrix}\qquad  a^t, c\in \Mat_{1\times (2k-2j-2)}, b\in \Mat_1.
\end{equation}
Finally, we define the group $Q_{r_1}$ to be all the matrices $u_{2k-2j,n}'(0,Z)$ (see \eqref{mat5}) where $Z$
is of the form 
$$Z=\begin{pmatrix} 0&&\\ a&0_{2k-2j-2}&\\ b&a^*&0\end{pmatrix}\qquad a\in  \Mat_{1\times (2k-2j-2)},  b\in \Mat_1.$$

Let $M_0$ be the group $M_0=\prod_{i} M_i$.
Perform a root exchange similar to the one above. Using \cite{G-R-S4} Section 7, we deduce that the integral $L(j)$ is zero for all choices of data if and only if the integral
\begin{multline}\notag
\int
\int\limits_{M_0(F)\backslash M_0({\A})}\ \ \ 
\int\limits_{Q_{r_1}({\A})\ldots Q_2({\A})Q_1({\A})U_j(F)\backslash U_j({\A})}
\overline{\varphi^{(r)}(g)}\\
\theta_{2(n+k(r-1)-jr)}^{(r)}(um_0\iota_{3,j}(v,g))\psi_{U_j}(u)\psi_{V_{2k-2j},\delta}(v)\,du\,dm_0\,dv\,dg
\end{multline}
is zero for all choices of data.  Here, the domains of integration of the variables $v$ and $g$ are the same as for integral $L(j)$ above.

Let $V'_{2k-2j}$ be the unipotent radical  of the standard parabolic of $SO_{2k-2j}$ with Levi part $GL_1\times SO_{2k-2j-2}$, and let $\psi_{V_{2k-2j}'}$ be the Whittaker
character restricted to this subgroup: $\psi_{V_{2k-2j}'}(v')=\psi(v'_{1,2})$.  Let $\iota_3:SO_{2k-2j-2}\to SO_{2k-2j}$ be the embedding $\iota_3(v)=\text{diag}(1,v,1)$.
Then every matrix $v$ in $V_{2k-2}$ has a unique factorization $v=v'\iota_3(v'')$ with $v'\in V'_{2k-2j}$ and $v''\in V_{2k-2j-2}$.
Since $j<\text{min}\{k-2, r_1-1\}$, we have $2k-2j>4$. Hence we have the factorization $\psi_{V_{2k-2j},\delta}(v)=\psi_{V_{2k-2j}'}(v')\psi_{V_{2k-2j-2},\delta}(v'')$.

Define the group $M=M_0V'_{2k-2j}$. 
After changing variables, we obtain that $L(j)$ is zero for all choices of data if and only if the integral  
\begin{multline}\label{wh7}
\int
\int\limits_{V_{2k-2j-2}(F)\backslash V_{2k-2j-2}({\A})}\ \ 
\int\limits_{M(F)\backslash M({\A})}\ \ \ 
\int\limits_{Q_{r_1}({\A})\ldots Q_2({\A})Q_1({\A})U_j(F)\backslash U_j({\A})}
\overline{\varphi^{(r)}(g)}\\
\theta_{2(n+k(r-1)-jr)}^{(r)}(um\iota_{3,j}(v,g))\psi_{U_j}(u)\psi_{V_{2k-2j-2},\delta}(v)\psi_M(m)\,du\,dm\,dv\,dg
\end{multline}
is zero for all choices of data.
Here if $m\in M_j(\A)\subseteq M(\A)$ is the image of $u_{j+1}$ under the map \eqref{mat82} then $\psi_M(m)=\psi(u_{j+1}[2k-2j-1,2k-2j])$, and the character $\psi_M$ is extended trivially from $M_j(\A)$ to $M(\A)$.

The next step is to define certain monomial matrices $w_0^j\in Sp_{2(n+k(r-1)-jr)}$ whose non-zero entries are $\pm 1$. These matrices are of the form
\begin{equation}\label{weyl1}
w_0^j=\begin{pmatrix} w_1&&w_2\\ &I_{2n}&\\ w_3&&w_4\end{pmatrix},\qquad w_i\in \Mat_{k(r-1)-jr} \text{~for~}1\le i\le 4.
\end{equation}
Since these matrices are symplectic, it is enough to specify the non-zero entries in the matrix $\begin{pmatrix} w_1&w_2\end{pmatrix}$.  Let $\alpha=2n+k(r-1)-jr$.
The nonzero entries of the first $r$ rows of this matrix are as follows.   
The matrix has entry $1$ at position $(i, (i-1)(2k-2j-1)+1)$ for $1\le i\le j$; at position $(i,(2k-2j-1)j+(2k-2j)(i-j-1)+1)$ for $j+1\le i\le r_1$; 
at position $(r_1+i,\alpha+(i-1)(2k-2j)+1)$ for $1\le i\le r_1-j$;
at position $(r-j, \alpha +(r_1-j-1)(2k-2j)+2)$; and 
at position $(r-j+i,\alpha+(r_1-j)(2k-2j)+(i-1)(2k-2j-1)+1)$ for $1\le i\le j$.
The next $k(r-1)-(j+1)r$ rows of the matrix $\begin{pmatrix} w_1&w_2\end{pmatrix}$ are zero in positions in $w_2$.
As for the matrix $w_1$, let  
$w_1^0$ denote the matrix obtained from $w_1$ by omitting  the first $r$ rows. Then 
$$w_1^0= \begin{pmatrix} w_{1,1}^0&0\\ 0&w_{2,2}^0\end{pmatrix}\qquad
w_{1,1}^0\in \Mat_{(j+1)\beta\times (j+1)(\beta+2)}, \quad
w_{2,2}^0\in \Mat_{(r_1-j-1)\gamma\times ((r_1-j-1)(\gamma+2)+1)},$$
with $\beta=2k-2j-3$ and $\gamma=2k-2j-2$.  The matrix $w_{1,1}^0$ is given by
\begin{equation}\label{w-one-one-zero} w_{1,1}^0=\begin{pmatrix} 0&I_\beta&0&0&0_\beta&\ldots&&&&&&\\
0&0_\beta&0&0&I_\beta&0&0&0_\beta&\ldots&&&\\ 0&0_\beta&0&0&0_\beta&0&0&I_\beta&0&\ldots&&\\
\vdots&\vdots&\vdots&\vdots&\vdots&\vdots&\vdots&\vdots&\vdots&\ldots&\\
 0&0_\beta&0&0&0_\beta&0&0&0_\beta&0&\ldots&0&I_\beta&0\end{pmatrix},
\end{equation}
where this block matrix has $j+1$ rows each of height $\beta$,
 the unadorned $0$ is the zero matrix in $\Mat_{\beta\times 1}$, and the identity matrix $I_\beta$ appears in  the $i$-th row of the block matrix above in 
the $(3i-1)$-th column, $1\le i\le j+1$.  The matrix $w_{2,2}^0$ is given by 
$$w_{2,2}^0=\begin{pmatrix} 0&0&I_\gamma&0&0&0_\gamma&\ldots&&&&&&\\
0&0&0_\gamma&0&0&I_\gamma&0&0&0_\gamma&\ldots&&&\\ 0&0&0_\gamma&0&0&0_\gamma&0&0&I_\gamma&0&\ldots&&\\
 \vdots&\vdots&\vdots&\vdots&\vdots&\vdots&\vdots&\vdots&\vdots&\vdots&\ldots&\\ 
0&0&0_\gamma&0&0&0_\gamma&0&0&0_\gamma&0&\ldots&0&I_\gamma&0
\end{pmatrix}.
$$
This block matrix has $r_1-j-1$ rows each of height $\gamma$, each unadorned $0$ is the zero matrix in $\Mat_{\gamma\times 1}$,
and for $1\le i\le r_1-j-1$, the identity matrix  $I_\gamma$ appears in the $i$-th row in the $3i$-th column.

We remark that though $w_0^j$ is defined here for $j$ in the range $1\le j< (r-3)/2$, the same description makes sense for $j=(r-3)/2$.  We will make use
of $w_0^{(r-3)/2}$ in Section~\ref{case2} below.

Since the function $\theta_{2(n+k(r-1)-jr)}^{(r)}$ is invariant under $Sp_{2(n+k(r-1)-jr)}(F)$, we have 
$$\theta_{2(n+k(r-1)-jr)}^{(r)}(um\iota_{3,j}(v,g))=\theta_{2(n+k(r-1)-jr)}^{(r)}(w_0^jum\iota_{3,j}(v,g)(w_0^j)^{-1}w_0^j).$$
After conjugation, we deduce that the integral \eqref{wh7} is zero for all choices of data if and only if the integral
\begin{multline}\label{wh8}
\int 
\overline{\varphi^{(r)}(g)}
\theta_{2(n+k(r-1)-jr)}^{(r)}\left (\begin{pmatrix} A&B&C\\ &I_a&B^*\\ &&A^*\end{pmatrix}\begin{pmatrix} I_r&&\\ D&I_a&\\ E&D^*&I_r\end{pmatrix}u\iota_{3,j+1}(v,g)\right )\\ 
\widetilde{\psi}(A)\psi_{U_{j+1}}(u)\psi_{V_{2k-2j-2},\delta}(v)\psi'(D)\ d(...)
\end{multline}
is zero for all choices of data.  Here $a=2(n+k(r-1)-(j+1)r)$, and the domains of integration and characters in this integral are given as follows. 

First, the variables $v$ and $g$ are integrated  as in \eqref{wh7}.  
The variable $u$ is integrated over  $U_{j+1}(F)\backslash U_{j+1}({\A})$. The embedding  of these groups in $Sp_{2(n+k(r-1)-jr)}$ is given by 
$$u\iota_{3,j+1}(v,g)\to \text{diag}(I_r,u\iota_{3,j}(v,g),I_r).$$ This means that we can view these groups as subgroups of $Sp_{2(n+k(r-1)-(j+1)r)}$ embedded in
$Sp_{2(n+k(r-1)-jr)}$ by the map $h\to \text{diag}(I_r,h,I_r)$ (we do not introduce notation).  Also, let $L_r$ denote the upper triangular maximal unipotent subgroup of $GL_r$. 
Then the variable $A$ is integrated over $L_r(F)\backslash L_r({\A})$. The character $\widetilde{\psi}$ is the Whittaker character of the group $L_r$, given for $A=(A_{i,j})\in L_r$ by 
$\widetilde{\psi}(A)=\psi(A_{1,2}+A_{2,3}+\cdots +A_{r-1,r})$. 

To give the domain of integration of the variable $C$, for a positive integer $b$, let $T_b$ denote the group of all upper triangular matrices in $\Mat_b$. 
Let $T_{b,0}$ denote the subgroup of $T_b$ consisting of all upper triangular matrices with zero entries on the diagonal. Let $T_{b,0}^0=T_{b,0}\cap \Mat_b^0$. 
Let $C(r)$ denote the subgroup of $\Mat_r^0$ consisting of all matrices 
$$C=\begin{pmatrix} C_1&C_2&C_3\\ &C_4&C_2^*\\ &&C_1^*\end{pmatrix},\qquad
C_1\in T_j, C_2\in \Mat_{j\times (r-2j)}, C_3\in \Mat_j^0, 
C_4\in T_{r-2j,0}^0.$$
The variable $C$ in integral \eqref{wh8} is integrated over $C(r)(F)\backslash C(r)({\A})$. 

To give the integration domain of the variable $E$, let $T_{b,0,0}$ be the group 
consisting of all matrices in $T_{b,0}$ such that the all entries of the diagonal immediately above the main diagonal are zero. Let $E(r)$ be the group of all matrices in $\Mat_r^0$ of the form
$$E=\begin{pmatrix} E_1&E_2&E_3\\ &E_4&E_2^*\\ &&E_1^*\end{pmatrix},\qquad
E_1\in T_{j+1,0,0}, E_2\in \Mat_{(j+1)\times (r-2j-2)}, E_3\in \Mat_{j+1}^0, E_4\in T_{r-2j-2,0}^0.$$
Then $E$ is integrated over $E(r)(F)\backslash E(r)({\A})$. 

Let $B(r,a)$ denote the subgroup of $\Mat_{r\times a}$ consisting of all matrices $B=(B_{\alpha,\beta})$ such that $B_{\alpha,\beta}=0$ for
the following pairs of integers $(\alpha,\beta)$. First, if $1\le\alpha\le j+2$, then $1\le\beta\le (\alpha-1)(2k-2j-3)$. 
When $j+3\le\alpha\le (r-1)/2$ we have $1\le \beta\le (\alpha-1)(2k-2j-2)-(j+1)$. When $\alpha=(r+1)/2$ we have $1\le\beta\le n+a/2$. 
For $1+(r+1)/2\le\alpha\le r-j-2$ we have $1\le\beta\le (2\alpha-r-1)(k-j-1)+n+a/2$. When $\alpha=r-j-1, r-j$ we have $1\le \beta\le (r-2\alpha -3)(k-j-1)+n+a/2$, 
and finally, for $r-j+1\le\alpha\le r$ we have $1\le\beta\le (r-3-2j)(k-j-1)+(\alpha+j-r)(2k-2j-3)$. Then, the variable $B$ in integral \eqref{wh8} is integrated over the quotient $B(r,a)(F)\backslash B(r,a)({\A})$. 

Finally, to define the integration domain of the variable $D$, let $D(a,r)$ denote the subgroup of $\Mat_{a\times r}$ defined as follows. 
Given $B\in B(r,a)$, let $B'$ denote the matrix obtained from $B$ by omitting the last row. 
Then a matrix $D\in \Mat_{a\times r}$ is in $D(a,r)$ if all entries of its first column are zeros, and for {\sl{all}} $B\in B(r,a)$ we have $D'B'=0$. Then $D$ in integral \eqref{wh8} is  integrated over
$D(a,r)(F)\backslash D(a,r)({\A})$. (In \eqref{wh8} there is also a character on the group $D(a,r)$; however, since it will not be needed below, we will not specify it.)

The situation here is very similar to the integral studied in \cite{G-R-S3} Lemma 2.4, equation (2.4). 
As in that reference we now perform a root exchange in integral \eqref{wh8}, exchanging the non-trivial columns in the matrices $D$ and $E$ with corresponding rows in 
the matrices $B$ and $C$. We give some details. 

Let $I$ be the identity matrix of size $2(n+k(r-1)-jr)$, and consider the two unipotent groups 
$\{I+\sum_{i=1}^{2k-2j-3}m_ie'_{r+i,3}+m_{2k-2j-2}e'_{a+r+1,3}\}$ and
$\{I+\sum_{i=1}^{2k-2j-3}l_ie'_{2,r+i}+l_{2k-2j-2}e'_{2,a+r+1}\}.$   Notice that the first group is a subgroup of the group of matrices of the form
$$\begin{pmatrix} I_r&&\\ D&I_a&\\ E&D^*&I_r\end{pmatrix},\qquad
D\in D(a,r), E\in E(r).$$
Then the conditions of \cite{G-R-S4}, Lemma 7.1, are satisfied. We perform a root exchange between these two groups. 
Proceeding in this way, and using the vanishing of the Fourier coefficients of the representation $\Theta_{2(n+k(r-1)-jr)}^{(r)}$ given in Theorem~\ref{theta05}, part~\ref{theta05-1}, 
we deduce that the integral \eqref{wh8} is equal to
\begin{multline}\label{wh9}
\int 
\overline{\varphi^{(r)}(g)}
\theta_{2(n+k(r-1)-jr)}^{(r)}\left (\begin{pmatrix} A&B&C\\ &I_a&B^*\\ &&A^*\end{pmatrix}u\iota_{3,j+1}(v,g)\begin{pmatrix} I_r&&\\ D&I_a&\\ E&D^*&I_r\end{pmatrix}\right )\\ 
\widetilde{\psi}(A)\psi_{U_{j+1}}(u)\psi_{V_{2k-2j-2},\delta}(v)\psi'(D)\,d(...)
\end{multline}
where now the variable $D$ is integrated over $D(a,r)({\A})$, and $E$ is integrated over $E(r)({\A})$. 
Also, the variable $B$ is now integrated over $\Mat_{r\times a}(F)\backslash \Mat_{r\times a}({\A})$, and $C$ is integrated over $\Mat_r^0(F)\backslash \Mat_r^0({\A})$.
All other variables in \eqref{wh9} are integrated as in integral \eqref{wh8}. We remark that the conjugation of $u\iota_{3,j+1}(v,g)$ across the matrix involving $D$ and $E$ is possible 
since the corresponding groups normalize the group generated by the symplectic matrices involving these two variables. 

Applying Corollary 7.1 in \cite{G-R-S4}, we deduce that integral \eqref{wh8} is zero for all choices of data if and only if the integral 
\begin{equation}\label{wh10}
\int 
\overline{\varphi^{(r)}(g)}
\theta_{2(n+k(r-1)-jr)}^{(r)}\left (\begin{pmatrix} A&B&C\\ &I_a&B^*\\ &&A^*\end{pmatrix}u\iota_{3,j+1}(v,g)\right ) 
\widetilde{\psi}(A)\psi_{U_{j+1}}(u)\psi_{V_{2k-2j-2},\delta}(v)\,d(...)
\end{equation}
is zero for all choices of data. Here $B$ is integrated over $\Mat_{r\times a}(F)\backslash \Mat_{r\times a}({\A})$, and $C$ is integrated over $\Mat_r^0(F)\backslash \Mat_r^0({\A})$. Notice that the group generated by all matrices 
$$\begin{pmatrix} I_r&B&C\\ &I_a&B^*\\ &&I_r\end{pmatrix},\qquad
B\in \Mat_{r\times a}, C\in \Mat_r^0$$
is the unipotent radical of the maximal parabolic subgroup of $Sp_{2(n+k(r-1)-jr)}$ whose Levi part is $GL_r\times Sp_a$.
Hence, we can apply Proposition 1 in \cite{F-G1}. Since the theta representation of the group $GL_r^{(r)}({\A})$ is generic, it follows that integral \eqref{wh10} is zero for all choices of data if and only if the integral 
\begin{equation}\label{wh11}
\int 
\overline{\varphi^{(r)}(g)}
\theta_{2(n+k(r-1)-(j+1)r)}^{(r)} (u\iota_{3,j+1}(v,g)) 
\psi_{U_{j+1}}(u)\psi_{V_{2k-2j-2},\delta}(v)\,du\,dv\,dg
\end{equation}
is zero for all choices of data. Here all variables are integrated as in integral \eqref{wh10}. Since integral \eqref{wh11} is equal to $L(j+1)$ the Lemma follows.
\end{proof}

As mentioned before Lemma \ref{lem1}, the integral $L(0)$ is equal to \eqref{wh3}, which is zero for all choices of data if and only if the integral \eqref{wh1} is zero for all choices of data.
Thus it follows from Lemma \ref{lem1} that \eqref{wh1} is zero for all choices of data if and only if the integral $L(\text{min}(k-2,r_1-1))$ is zero for all choices of data. We analyze the two cases 
for this minimum separately.

\section{The case $k\le (r+1)/2$}\label{case1}
Before stating the result we will prove in this Section, we fix some notation.  Recall that the 
 unipotent groups $U_{2k,r_1,n}^{1,j}$ were defined for $0\le j\le \min(k-2,r_1-1)=k-2$ above (following \eqref{mat4}). 
 For the computations we will carry out now, we need to extend the definition to $j=k-1$ and introduce a suitable character of this group.
 
If $k<(r+1)/2$, then we define $U_{2k,r_1,n}^{1,k-1}$ to be the unipotent radical of the standard parabolic subgroup of $Sp_{2(n+r-k)}$ whose Levi part is 
 $GL_1^{k-1}\times GL_2^{r_1-k+1}\times Sp_{2n}$. (Note that $r_1-k+1>0$.) This unipotent group has the same factorization as in \eqref{mat5}, and once again 
 we define $U_{k-1}$ to be the subgroup of $U_{2k,r_1,n}^{1,k-1}$ consisting of all matrices of the form \eqref{mat5} such that $Y_2=0$. 
 For $u\in U_{k-1}$, the factorization \eqref{mat5} is given by
\begin{equation}\label{mat9}
u=u_{2,n}'(Y,Z)u_{k-2,1,2,d_{k-1}}^{k-1}(X_{k-1})\prod_{i=1}^{k-2} u_{i-1,1,1,d_i}^{i}(X_i)\prod_{i=k}^{r_1-1} u_{2i-k-1,2,2,d_i}^{1,i}(X_i)u_1
\end{equation}
with $Y=\left(\begin{smallmatrix} Y_1\\ 0 \end{smallmatrix}\right)$, $Y_1\in \Mat_{1\times n}$. Here each $d_j$ is defined so that the matrix is in $Sp_{2(n+r-k)}$.
The matrix $X_{k-1}$ has size $1\times 2$, and we write $X_{k-1}=\begin{pmatrix} x_1&x_2\end{pmatrix}$. 
Define a character of the group $U_{k-1}$ as follows. Given $u\in U_{k-1}$ with the factorization \eqref{mat9}, set
\begin{equation}\label{whca11}
\psi_{U_{k-1,\delta}}(u)=
\psi\Bigg(\delta^{-1} x_1+x_2 + \text{tr}'(Z) +
\sum_{\substack{{i=1}\\i\ne k-1}}^{r_1-1}\text{tr}(X_i)\Bigg).
\end{equation}
Here $\psi(\text{tr}'(Z))$ is defined as in \eqref{wh2}, and $\delta\in F^\times$ (see \eqref{wh1}). 

When $k=(r+1)/2$, we define $U_{2k,r_1,n}^{1,k-1}$ to be the unipotent radical of the standard parabolic subgroup of $Sp_{2(n+k-1)}$ whose 
Levi part is $GL_1^{k-1}\times Sp_{2n}$. The corresponding factorization is now
\begin{equation}\label{mat10}
u=u_{1,n}'(Y,Z)\prod_{i=1}^{k-2} u_{1,r_1,n}^{1,i}(X_i)u_1
\end{equation}
where $X_i$ and  $Z$ are scalars. The subgroup $U_{2k,r_1,n}^{0,k-1}$ consists of all matrices $u$ as in \eqref{mat10} such that $Y=0$; this is also $U_{k-1}$. 
We define the character $\psi_{U_{2k,r_1,n,\delta}^{0,k-1}}$ by
\begin{equation}\label{whca12}
\psi_{U_{2k,r_1,n,\delta}^{0,k-1}}(u)=
\psi(\delta^{-1} Z)\psi(X_1+X_2+\cdots +X_{k-2}).
\end{equation}

In this section we prove the following Lemma.
\begin{lemma}\label{lem2}
Suppose that $k\le (r+1)/2$. Then the integral \eqref{wh1} is zero for all choices of data if and only if the integral 
\begin{equation}\label{whca13}
\int\limits_{Sp_{2n}(F)\backslash Sp_{2n}({\A})}\  
\int\limits_{U_{2k,r_1,n}^{0,k-1}(F)\backslash U_{2k,r_1,n}^{0,k-1}({\A})}\overline{\varphi^{(r)}(g)}
\theta_{2(n+r-k)}^{(r)}(u\iota_{3,k-1}(1,g))\psi_{U_{2k,r_1,n,\delta}^{0,k-1}}(u)\,
du\,dg
\end{equation}
is zero for all choices of data.
\end{lemma}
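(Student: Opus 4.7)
Since $k\le (r+1)/2$ we have $k-2\le r_1-1$, so $\min(k-2,r_1-1)=k-2$. As explained just after the statement of Lemma~\ref{lem1}, iterating Lemma~\ref{lem1} reduces the vanishing of \eqref{wh1} for all choices of data to the vanishing of $L(k-2)$ for all choices of data. Hence it is enough to prove that $L(k-2)$ vanishes for all data if and only if the integral \eqref{whca13} does.

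The plan is to carry out one further transformation modeled on the proof of Lemma~\ref{lem1}, but adapted to the boundary situation $j=k-2$, where $V_{2k-2j}=V_4$ is the maximal unipotent of $SO_4$. In this case the Whittaker character $\psi_{V_4,\delta}(v)=\psi(\delta v_{1,2}+v_{1,3})$ does not split as a product of an ``outer'' $GL_1$-character and an ``inner'' character on a nontrivial $V_{2k-2j-2}$, as it did for smaller $j$: both of its nontrivial entries are outer and must be retained through the manipulation. We therefore form unipotent subgroups $M_i$ and $Q_i$, $1\le i\le r_1$, analogous to those in Lemma~\ref{lem1} (with $M_{k-1}$ playing the role of the $M_{j+1}$ of \eqref{mat82}), perform the same sequence of root exchanges via \cite{G-R-S4}, Corollary~7.1, conjugate by the Weyl element $w_0^{k-2}$ of the form \eqref{weyl1}--\eqref{w-one-one-zero} (whose description makes sense verbatim at this boundary), and apply the further root exchanges that follow \eqref{wh8}. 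Throughout, we invoke Theorem~\ref{theta05}, part~\ref{theta05-1}, to kill Fourier coefficients of $\Theta^{(r)}_{2(n+k(r-1)-(k-2)r)}$ attached to unipotent orbits larger than $\mathcal{O}_c$. Finally, a single application of Proposition~1 of \cite{F-G1}, together with the genericity of the theta representation on the $r$-fold cover of $GL_r$, collapses the integration over the unipotent radical of the maximal parabolic of Levi type $GL_r\times Sp_{2(n+r-k)}$ that has been produced, yielding the integral \eqref{whca13}.

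The main obstacle will be to track carefully where the scalar $\delta$ from $\psi_{V_4,\delta}$ ultimately ends up. The expected outcome is that the $v_{1,3}$-contribution (coefficient $1$) migrates to the $x_2$-entry of $X_{k-1}$ in \eqref{whca11} (respectively, to the ``square-symplectic'' entry governed by $\mathrm{tr}'(Z)$ in \eqref{whca12}), while the $\delta v_{1,2}$-contribution migrates to become the coefficient $\delta^{-1}$ of the $x_1$-entry in \eqref{whca11} (resp.\ of $Z$ in \eqref{whca12}). The apparent inversion $\delta\mapsto\delta^{-1}$ should reflect the torus normalization by $\diag(\delta,1,\ldots,1,\delta^{-1})$ needed after the Weyl conjugation in order to restore the standard generic character on the $GL_r$-factor before Proposition~1 of \cite{F-G1} is invoked. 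A secondary technical point, reminiscent of the boundary situation handled in \cite{G-R-S3}, Lemma~2.4, is to verify that the various root exchanges remain valid at the boundary $j=k-2$, where several of the groups $Q_i$ and $M_i$ degenerate compared to their shape inside the inductive step of Lemma~\ref{lem1}; this will require a case split on whether $k<(r+1)/2$ or $k=(r+1)/2$, matching the two forms of character \eqref{whca11} and \eqref{whca12} in the statement.
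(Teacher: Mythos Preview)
Your overall architecture matches the paper's: reduce to $L(k-2)$ via Lemma~\ref{lem1}, perform the root exchanges with the $M_i,Q_i$ at $j=k-2$, conjugate by $w_0^{k-2}$, do the $D,E$ vs.\ upper-triangular root exchanges, and finish with Proposition~1 of \cite{F-G1}. The case split $k<(r+1)/2$ versus $k=(r+1)/2$ is also correct.

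However, your proposed mechanism for handling $\delta$ is wrong, and this is the one genuinely new step in the lemma. After conjugating by $w_0^{k-2}$, the character $\widetilde{\psi}(A)$ on the $GL_r$-block is already the standard Whittaker character; the $\delta$ from $\psi_{V_4,\delta}$ lands not on $A$ but on the $B$-block, as a nontrivial character $\psi_{1,\delta}(B)=\psi(\delta B_{r-k+1,\,2n+2r-3k+2})$. This obstructs the root exchange of column $r-k+2$ of $D$ against the corresponding row of $B$: the hypotheses of \cite{G-R-S4}, Lemma~7.1, require the character to be trivial on that $B$-row, and here it is not. A torus conjugation by $\diag(\delta,1,\ldots,1,\delta^{-1})$ cannot fix this: it only rescales the coefficient in $\psi_{1,\delta}(B)$ by a nonzero scalar, leaving a nontrivial character on $B$ (and in fact it would simultaneously destroy the standard form of $\widetilde{\psi}(A)$).

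The paper's device is different: one inserts the rational \emph{unipotent} element $x_1(\delta)=I_{2(n+2r-k)}+\delta^{-1}e'_{2n+3r-3k+2,\,r-k+2}$ on the left by automorphicity of $\theta^{(r)}$ and conjugates it to the right. The resulting change of variables cancels $\psi_{1,\delta}(B)$ entirely and simultaneously transfers the $\delta$-dependence into the character on the inner $u$-variable, replacing $\psi_{U^{0,k-1}_{2k,r_1,n,0}}$ by $\psi_{U^{0,k-1}_{2k,r_1,n,\delta}}$; this is exactly where the $\delta^{-1}$ in \eqref{whca11}--\eqref{whca12} originates. Only after this step do the remaining root exchanges and the appeal to Proposition~1 of \cite{F-G1} go through as in Lemma~\ref{lem1}. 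You should replace the torus normalization idea with this unipotent conjugation.
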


\begin{proof}
Since $k\le (r+1)/2$, we have $k-2\le r_1-1$. Thus it follows 
from Lemma \ref{lem1} that the integral \eqref{wh1} is zero for all choices of data if and only if the integral $L(k-2)$ is zero for all choices of data. 
Substituting $j=k-2$, we may write $L(k-2)$ as
\begin{multline}\label{whca14}
\int\limits_{Sp_{2n}(F)\backslash Sp_{2n}({\A})}\ 
\int\limits_{V_4(F)\backslash V_4({\A})}\ \ 
\int\limits_{U_{2k,r_1,n}^{0,k-2}(F)\backslash U_{2k,r_1,n}^{0,k-2}({\A})}\overline{\varphi^{(r)}(g)}\\
\theta_{2(n+2r-k)}^{(r)}(u\iota_{3,k-2}(v,g))\psi_{U_{2k,r_1,n}^{0,k-2}}(u)
\psi_{V_{4},\delta}(v)\,du\,dv\,dg.
\end{multline}

The proof is now similar to the proof of Lemma \ref{lem1}. 
First, for $1\le i\le r_1$ define the groups $M_i$ and $Q_i$ as in the proof of Lemma \ref{lem1}, with $j=k-2$. 
As in that Lemma define $M=V'_4\prod_i M_i$. 
Thus $M$ consists of all matrices 
\begin{equation}\label{mat80}
\text{diag}(m_1,m_2,\ldots,m_{k-2},m_{k-1},\ldots,m_{r_1},I_{2n},m_{r_1}^*,\dots)
\end{equation}
where the matrix $m_i$ (for $1\le i\le r_1$) is of the form
$$m_i=\begin{pmatrix} 1&a&b\\ &I_{\alpha-2}&c\\ &&1\end{pmatrix}\ \ \ \ 
a,c^t\in \Mat_{1\times (\alpha-2)};\ \ b\in \Mat_{1\times 1}$$
with $\alpha=3$ for $1\le i\le k-2$, $\alpha=4$ for $k-1\le i\le r_1$. 
Performing similar root exchanges, we deduce that the integral \eqref{whca14} is zero for all choices of data if and only if the integral  
\begin{multline}\label{whca15}
\int\limits_{Sp_{2n}(F)\backslash Sp_{2n}({\A})}\ 
\int\limits_{M(F)\backslash M({\A})}\ \ 
\int\limits_{Q_{r_1}({\A})\ldots Q_1({\A})U_{2k,r_1,n}^{0,k-2}(F)\backslash U_{2k,r_1,n}^{0,k-2}({\A})}\overline{\varphi^{(r)}(g)}\\
\theta_{2(n+2r-k)}^{(r)}(um\iota_{3,k-2}(1,g))\psi_{U_{2k,r_1,n}^{0,k-2}}(u)
\psi_{M,\delta}(m)\,du\,dm\,dg
\end{multline}
is zero for all choices of data.
The character $\psi_{M,\delta}$ is non-trivial only on the variable $m_{k-1}$ in \eqref{mat80}, and if
$$m_{k-1}=\begin{pmatrix} 1&a&b&c\\ &1&&d\\ &&1&e\\ &&&1\end{pmatrix}\in GL_4(\A)$$
then $\psi_{M,\delta}(m)=\psi_{M,\delta}(m_{k-1})=\psi(d+\delta e)$.

The next step is to define a Weyl element $w_0^{k-2}$ as in \eqref{weyl1}. In this case the $w_i$ are matrices in $\Mat_{2r-k}$, and it is enough to specify the 
matrix $\begin{pmatrix} w_1&w_2\end{pmatrix}$. We first describe the first $r$ rows of this matrix. This matrix has the value $1$ at the locations $(i,3(i-1)+1)$ for $1\le i\le k-2$;
$(i,4i-k-1)$ for $k-1\le i\le r_1$; $(i, 2n-k+4i-1)$ for $r_1+1\le i\le r-k+1$; $(r-k+2, 2n+4r-5k+5)$; and at $(i,2n+r-2k+3i-1)$ for $r-k+3\le i\le r$.  All other entries in these rows are $0$.
For the next $r-k$ rows of the matrix $\begin{pmatrix} w_1&w_2\end{pmatrix}$ the entries in $w_2$ are all $0$. As for $w_1$, let  
$w_1^0$ denote the matrix obtained from $w_1$ by omitting  the first $r$ rows. Then we choose
$$w_1^0= \begin{pmatrix} w_{1,1}^0&0\\ 0&w_{2,2}^0\end{pmatrix},\qquad
w_{1,1}^0\in \Mat_{(k-1)\times 3(k-1)},\quad
w_{2,2}^0\in \Mat_{(r-2k+1)\times (2r-4k+3)}.$$
For $1\le i\le k-1$, the matrix $w_{1,1}^0$ has the value $1$ at the $(i,3i)$ locations and $0$ otherwise. If $k<(r+1)/2$, the matrix $w_{2,2}^0$ is given by
$$w_{2,2}^0=\begin{pmatrix} 0&0&I_2&0&0&0&\ldots&&&&&&\\
0&0&0&0&0&I_2&0&0&0&\ldots&&&\\ 0&0&0&0&0&0&0&0&I_2&0&\ldots
&&\\ \vdots&\vdots&\vdots&\vdots&\vdots&\vdots&\vdots&\vdots&\vdots&\vdots&\ldots&\\ 0&0&0&0&0&0&0&0&0&0&\ldots&0&I_2&0
\end{pmatrix}.
$$
This block matrix has $r_1-k+1$ rows, and the identity matrix  $I_2$ at the $i$-th row is in the $3i$-th column. 
Each zero represents the zero matrix in $\Mat_{2\times 1}$ or $\Mat_{2\times 2}$. When $k=(r+1)/2$, then $w_1^0=\begin{pmatrix} w_{1,1}^0&0\end{pmatrix}$ 
where the zero represents the zero matrix in $\Mat_{(k-1)\times 1}$.

We use the left invariance property of the function $\theta_{2(n+2r-k)}^{(r)}$ to conjugate by the 
Weyl element $w_0^{k-2}$. In a similar way to arriving at \eqref{wh8}, we deduce that the integral \eqref{whca15} is zero for all choices of data if and only if the integral
\begin{multline}\label{whca16}
\int 
\overline{\varphi^{(r)}(g)}
\theta_{2(n+2r-k)}^{(r)}\left (\begin{pmatrix} A&B&C\\ &I_a&B^*\\ &&A^*\end{pmatrix}\begin{pmatrix} I_r&&\\ D&I_a&\\ E&D^*&I_r\end{pmatrix}u\iota_{3,k-2}(1,g)\right )\\ 
\widetilde{\psi}(A)\psi_{U_{2k,r_1,n,0}^{0,k-1}}(u)\psi_{1,\delta}(B)\psi'(D)\,d(...)
\end{multline}
is zero for all choices of data. Here $a=2(n+r-k)$. The variable  $g$ is integrated as in \eqref{whca15}, and the 
variable $u$ is integrated over the quotient $U_{2k,r_1,n}^{0,k-1}(F)\backslash U_{2k,r_1,n}^{0,k-1}({\A})$.
The character $\psi_{U_{2k,r_1,n,0}^{0,k-1}}$ is defined  by 
$$\psi_{U_{2k,r_1,n,0}^{0,k-1}}(u)=\begin{cases}
\psi\Big(x_2 + \text{tr}'(Z) +
\sum_{\underset{i\ne k-1}{i=1}}^{r_1-1}\text{tr}(X_i)\Big)&\text{if $k<(r+1)/2$}\\ 
\psi(X_1+X_2+\cdots +X_{k-2})&\text{if $k=(r+1)/2$.}\end{cases}$$
Here the notation is as in  \eqref{whca11} and \eqref{whca12}. The variable $A$ and the character $\widetilde{\psi}(A)$ are 
as in \eqref{wh8}, and the variables $B,C,D$ and $E$ are defined in a similar way to integral \eqref{wh8}. 
If $B=(B_{\alpha,\beta})\in \Mat_{r\times a}(\A)$ then $\psi_{1,\delta}(B)=\psi(\delta B_{r-k+1, 2n+2r-3k+2})$.
If $D=(D_{\alpha,\beta})\in \Mat_{a\times r}(\A)$ then $\psi'(D)=\psi(D_{2n+2r-3k+1, r-k+2})$
when $k<(r+1)/2$ and $\psi'(D)=\psi(D_{k-1, r-k+2})$ when $k=(r+1)/2$.

At this point the argument deviates from the prior case, more precisely in handling column $r-k+2$ of $D$, 
as the character $\psi_{1,\delta}(B)$ is not trivial 
on row $r-k+1$ of $B$. We proceed as follows. Define the matrix $x_1(\delta)=I_{2(n+2r-k)}+\delta^{-1}e'_{2n+3r-3k+2, r-k+2}$ in $Sp_{2(n+2r-k)}(F)$. 
By automorphicity, we have $\theta_{2(n+2r-k)}^{(r)}(h)=\theta_{2(n+2r-k)}^{(r)}(x_1(\delta)h)$ for all $h\in  Sp^{(r)}_{2(n+2r-k)}({\A})$. 
Conjugating the matrix $x_1(\delta)$ to the right and changing variables, integral \eqref{whca16} is equal to
\begin{multline}\label{whca17}
\int 
\overline{\varphi^{(r)}(g)}
\theta_{2(n+2r-k)}^{(r)}\left (\begin{pmatrix} A&B&C\\ &I_a&B^*\\ &&A^*\end{pmatrix}\begin{pmatrix} I_r&&\\ D&I_a&\\ E&D^*&I_r\end{pmatrix}u\iota_{3,k-2}(1,g)x_1(\delta)\right )\\ 
\widetilde{\psi}(A)\psi_{U_{2k,r_1,n,\delta}^{0,k-1}}(u)\psi'(D)\,d(...).
\end{multline}
Notice that here the character $\psi_{1,\delta}(B)$ has been omitted (it has been cancelled out), and also  $\psi_{U_{2k,r_1,n,0}^{0,k-1}}$ has been replaced by $\psi_{U_{2k,r_1,n,\delta}^{0,k-1}}$. 
Both of these changes are the result of the change of variables required to move the matrix $x_1(\delta)$ to the right.

We may now perform root exchanges similar to those carried out in analyzing \eqref{wh8}. 
As in that case, we proceed by the columns of the matrices $D$ and $E$ starting from the first non-zero column and then in increasing order. 
Using the smallness of the representation $\Theta_{2(n+2r-k)}^{(r)}$, i.e.\ Theorem~\ref{theta05}, part~\ref{theta05-1}, 
we deduce that the integral \eqref{whca17} is zero for all choices of data if and only if the integral 
\begin{equation*}
\int 
\overline{\varphi^{(r)}(g)}
\theta_{2(n+2r-k)}^{(r)}\left (\begin{pmatrix} A&B&C\\ &I_a&B^*\\ &&A^*\end{pmatrix}u\iota_{3,k-2}(1,g)\right ) 
\widetilde{\psi}(A)\psi_{U_{2k,r_1,n,\delta}^{0,k-1}}(u)\,d(...)
\end{equation*}
is zero for all choices of data. Here, $B$ is integrated over the quotient $\Mat_{r\times a}(F)\backslash \Mat_{r\times a}({\A})$, 
and $C$ is integrated over  $\Mat_r^0(F)\backslash \Mat_r^0({\A})$.  Now arguing as above (after \eqref{wh10}), the Lemma follows.
\end{proof}

Now we present a criterion for the vanishing of all Whittaker coefficients ${\mathcal W}_{k,\delta}(f)$. 
This criterion depends on the Descent Conjecture, which was described briefly in the Introduction.
To avoid disrupting the continuity of the proofs, we do not give additional details about it now, but defer its formulation and discussion to Section~\ref{4G}, Conjecture~\ref{conj20} below.
For the criterion, we require the unipotent group $U_{1,r_1-k+1,n}$ and its character $\psi_{U_{1,r_1-k+1,n}}$, defined in Section~\ref{basic}.
Also, we now write $l_n$ for the projection from the group $U_{1,r_1-k+1,n}$ onto the Heisenberg group ${\mathcal H}_{2n+1}$.
The criterion is this.
\begin{proposition}\label{prop0} Suppose that the Descent Conjecture holds.
Suppose that $k\le (r+1)/2$. Then the Whittaker coefficients ${\mathcal W}_{k,\delta}(f)$ are zero for all choices of data $f$ if and only if the integral 
\begin{multline}\label{whca19}
\int\limits_{Sp_{2n}(F)\backslash Sp_{2n}({\A})}\  
\int\limits_{U_{1,r_1-k+1,n}(F)\backslash U_{1,r_1-k+1,n}({\A})}\overline{\varphi^{(r)}(g)}
\theta_{2n}^{(2),\psi^\delta}(l_{2n}(u)g)\\\theta_{2n+r-2k+1}^{(2r),\psi^\delta}\left(u\begin{pmatrix}I_{\tfrac{r+1}{2}-k}&&\\&g&\\&&I_{\tfrac{r+1}{2}-k}\end{pmatrix}\right)\psi_{U_{1,r_1-k+1,n}}(u)\,
du\,dg
\end{multline}
is zero for all choices of data.
\end{proposition}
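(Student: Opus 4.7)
The proof has two stages. The first stage, already accomplished by Lemma~\ref{lem2}, reduces the vanishing of $\mathcal{W}_{k,\delta}(f)$ for all data to the vanishing of the integral \eqref{whca13} for all data. The plan for the second stage is to identify \eqref{whca13} with \eqref{whca19} by means of the Descent Conjecture (Conjecture~\ref{conj20}). The Descent Conjecture describes a Fourier-Jacobi-type descent of $\Theta_{2(n+r-k)}^{(r)}$, realizing it (after a Fourier coefficient along a suitable unipotent subgroup with a $\delta$-twisted character) as the product of the classical theta representation $\Theta_{2n}^{(2),\psi^\delta}$ on the double cover of $Sp_{2n}$ and the theta representation $\Theta_{2n+r-2k+1}^{(2r),\psi^\delta}$ on the $2r$-fold cover of $Sp_{2n+r-2k+1}$.

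The main observation is that the unipotent $U_{2k,r_1,n}^{0,k-1}$ in \eqref{whca13}, together with its character $\psi_{U_{2k,r_1,n,\delta}^{0,k-1}}$, precisely realizes this Fourier coefficient. In more detail, the $u'_{2,n}(Y,Z)$ piece of \eqref{mat9} (or the analogous piece of \eqref{mat10}) projects to $\mathcal{H}_{2n+1}$ via $l_{2n}$, and will yield the Heisenberg-Weil factor $\theta_{2n}^{(2),\psi^\delta}(l_{2n}(u)g)$; the remaining $X_i$ coordinates, together with the $x_1$ entry on which the character is $\delta$-twisted as in \eqref{whca11} or \eqref{whca12}, provide the Fourier coefficient whose descent is $\theta_{2n+r-2k+1}^{(2r),\psi^\delta}$. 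Substituting the descent identity into \eqref{whca13} and rewriting the argument of the second theta as $u\,\diag(I_{(r+1)/2-k},g,I_{(r+1)/2-k})$ (after a Weyl-element conjugation analogous to $w_0^{k-2}$ employed in Lemma~\ref{lem2}, and possibly an alignment shift of the $x_1(\delta)$ type) should reduce the outer integration exactly to $U_{1,r_1-k+1,n}(F)\backslash U_{1,r_1-k+1,n}(\A)$ against the character $\psi_{U_{1,r_1-k+1,n}}$, producing \eqref{whca19}. The biconditional follows because the Descent Conjecture identifies the relevant representation spaces rather than giving only a one-sided nonvanishing statement; consequently the substitution step is reversible.

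The main obstacle is the bookkeeping: tracking the Weil-type $\psi^\delta$ twists through the descent, aligning the Heisenberg embedding $l_{2n}$ on $U_{1,r_1-k+1,n}$ with the polarization implicit in the descent formula, and handling uniformly the two structurally distinct subcases $k<(r+1)/2$ (with \eqref{mat9} and \eqref{whca11}) and $k=(r+1)/2$ (with \eqref{mat10} and \eqref{whca12}). In the latter case the index $r_1-k+1$ equals zero and the unipotent $U_{1,r_1-k+1,n}$ is trivial, so the outer integration degenerates and the integrand simplifies accordingly; this degenerate case needs to be matched directly against the descent formula. Once these alignments are in place, the identification of the two integrals is a direct consequence of the Descent Conjecture, with the classical $\theta_{2n}^{(2),\psi^\delta}$ factor arising from the Heisenberg-Weil structure and $\theta_{2n+r-2k+1}^{(2r),\psi^\delta}$ arising from the descent itself.
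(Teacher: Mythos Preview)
Your overall skeleton---reduce to \eqref{whca13} via Lemma~\ref{lem2}, then invoke the Descent Conjecture---matches the paper. But you have mis-stated what the Descent Conjecture actually provides, and this leads to a genuine gap in how $\theta_{2n}^{(2),\psi^\delta}$ enters the picture. Conjecture~\ref{conj20} does \emph{not} realize a Fourier coefficient of $\Theta_{2(n+r-k)}^{(r)}$ as a product $\Theta_{2n}^{(2),\psi^\delta}\otimes\Theta_{2n+r-2k+1}^{(2r),\psi^\delta}$. It identifies the descent integral \eqref{whca200}---which already has $\overline{\theta_{2n+r-2k+1}^{(2),\psi^\delta}}$ baked into its definition---with $\Theta_{2n+r-2k+1}^{(2r),\psi^\delta}$ alone. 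The classical theta on the double cover that naturally appears lives on $Sp_{2n+r-2k+1}$, not on $Sp_{2n}$.

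Concretely, the paper's route from \eqref{whca13} is: after a conjugation by $x(\delta)$ and a new Weyl element $w$ (different from $w_0^{k-2}$), plus root exchanges, one reaches an integral \eqref{whca115} in which the $v$-integration is a Fourier coefficient attached to the orbit $((r-1)1^a)$. Ikeda's theory of Fourier--Jacobi coefficients is then invoked (this is the step you omit) to express this coefficient via a dense span of functions of the form $\theta_{2n+r-2k+1}^{(2),\psi^\delta}\cdot(\text{descent integral})$; only then does the Descent Conjecture identify the second factor with $\theta_{2n+r-2k+1}^{(2r),\psi^\delta}$. This yields \eqref{whca118}, which still has an inner integration of $\theta_{2n+r-2k+1}^{(2),\psi^\delta}$ over $\widetilde{B}_2(r_1,a)$; a separate unfolding of that classical theta series (equation \eqref{whca119}) is what collapses it to $\theta_{2n}^{(2),\psi^\delta}(l_{2n}(u)g)$. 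So the factor $\theta_{2n}^{(2),\psi^\delta}$ does not come from ``the $u'_{2,n}(Y,Z)$ piece projecting to $\mathcal{H}_{2n+1}$'' as you propose---it comes from Ikeda's $\theta_{2n+r-2k+1}^{(2),\psi^\delta}$ reduced by a further theta unfolding. Your sketch is missing both the Ikeda input and this reduction step, and without them the argument does not close.
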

Equivalently, the Whittaker coefficient \eqref{wh1} is not zero for some choice of data if and only if the integral \eqref{intro4} that appears in Proposition~\ref{general-two} is not zero for some choice of data.

\begin{proof}
By Lemma~\ref{lem2}, it suffices to prove that the integral \eqref{whca13} is zero for all choices of data if and only if the integral \eqref{whca19} is zero for all choices of data. 

Suppose first that $k<(r+1)/2$. Let $a=2(n+r-k)$ and let $x(\delta)$ denote the 
unipotent element  $x(\delta)=I_a-\sum_{i=1}^{r_1-k}\delta^{-1}e'_{k+2i,k+2i+1}\in Sp_{2(n+r-k)}(F)$. By automorphicity, 
the function $\theta_{2(n+r-k)}^{(r)}(h)$ is left-invariant under this element. 
Moving this matrix from left to right and changing variables in $U_{2k,r_1,n}^{0,k-1}$, it follows that  \eqref{whca13} is zero for all choices of data if and only if the integral 
\begin{equation}\label{whca110}
\int\limits_{Sp_{2n}(F)\backslash Sp_{2n}({\A})}\  
\int\limits_{U_{2k,r_1,n}^{0,k-1}(F)\backslash U_{2k,r_1,n}^{0,k-1}({\A})}\overline{\varphi^{(r)}(g)}
\theta_{2(n+r-k)}^{(r)}(u\iota_{3,k-1}(1,g))\psi'_{U_{2k,r_1,n,\delta}^{0,k-1}}(u)\,du\,dg
\end{equation}
is zero for all choices of data. Here the character $\psi'_{U_{2k,r_1,n,\delta}^{0,k-1}}$ is defined, using the factorization \eqref{mat9}, by
\begin{equation}\label{whca111}\notag
\psi'_{U_{2k,r_1,n,\delta}^{0,k-1}}(u)=
\psi\Bigg(x_2 + \text{tr}'_\delta(Z) +
\sum_{\underset{i\ne k-1}{i=1}}^{r_1-1}\text{tr}(X_i)\Bigg)
\end{equation}
where for $Z=\begin{pmatrix} z_1&z_2\\ z_3&z_1\end{pmatrix}\in \Mat^0_2$, we define $\text{tr}'_\delta(Z)=z_1+\delta^{-1}z_3$.

Let $w\in Sp_{2(n+r-k)}$ be given by $w=\text{diag}(I_{k-1},w_1,I_{2n},w_1^*,I_{k-1})$ where $w_1\in \Mat_{r-2k+1}(F)$ is given as follows. 
Write $w_1=\begin{pmatrix} w_{1,1}\\ w_{1,2}\end{pmatrix}$ where $w_{1,i}\in \Mat_{(r_1-k+1)\times (r-2k+1)}(F)$. 
The matrix $w_{1,1}$ has a $1$ at the positions $(i,2i)$ and $w_{1,2}$ has a $1$ at the positions $(i,2i-1)$ for each $i$, $1\le i\le r_1-k+1$, and all other entries of the matrix $w_1$ are $0$.

Conjugating in \eqref{whca110} by $w$, we deduce that this integral is zero for all choices of data if and only if the integral
\begin{multline}\label{whca112}
\int 
\overline{\varphi^{(r)}(g)}
\theta_{2(n+r-k)}^{(r)}\left (\begin{pmatrix} A&B&C\\ &I_a&B^*\\ &&A^*\end{pmatrix}\begin{pmatrix} I_{r_1}&&\\ D&I_a&\\ &D^*&I_{r_1}\end{pmatrix}\begin{pmatrix} I_{r_1}&&\\ &u&\\ &&I_{r_1}\end{pmatrix}
\begin{pmatrix}I_{r-k}&&\\&g&\\&&I_{r-k}\end{pmatrix}\right )\\ 
\widetilde{\psi}(A)\psi_{U_{1,r_1-k+1,n}}(u)\psi'(B)\psi_\delta(C)\,d(...)
\end{multline}
is zero for all choices of data.
Here $a=2n+r-2k+1$. The domain of integration and characters here are described as follows. The variable $u$ is integrated over  $U_{1,r_1-k+1,n}(F)\backslash U_{1,r_1-k+1,n}({\A})$, and
$g$ is integrated over $Sp_{2n}(F)\backslash Sp_{2n}({\A})$. The variable $A$ is integrated over $L_{r_1}(F)\backslash L_{r_1}({\A})$, where we recall
that $L_{r_1}$ is the maximal upper unipotent subgroup of $GL_{r_1}$. The character $\widetilde{\psi}(A)$ is the Whittaker character, defined following \eqref{wh8}. Let $B({r_1},a)$ denote the 
subgroup of $\Mat_{{r_1}\times a}$ of matrices  $B=(B_{\alpha,\beta})$ such that $B_{\alpha,\beta}=0$ for all $k-1\le\alpha\le r_1-1$ 
and $1\le\beta\le \alpha-k+2$, and for $\alpha={r_1}$ and $1\le\beta\le 2n+r_1-k+1$. Then the variable $B$ is integrated over 
$B({r_1},a)(F)\backslash B({r_1},a)({\A})$. The character $\psi'(B)$ is given by $\psi'(B)=\psi(B_{r_1,2n+r_1-k+2})$. The variable $C$ is integrated over 
$\Mat_{r_1}^0(F)\backslash \Mat_{r_1}^0({\A})$, and the character $\psi_\delta$ is given by $\psi_\delta(C)=\psi(\delta^{-1}C_{r_1,1})$. Finally, let $D(a,{r_1})$ denote the 
subgroup of $\Mat_{a\times {r_1}}$ consisting of all matrices of the form
$D=\begin{pmatrix} 0&D'\\ 0&0\end{pmatrix}$ such that $D'\in \Mat_{r_1-k+1}$ and $D'_{\alpha,\beta}=0$ for all $1\le \alpha\le r_1-k+1$ and $\beta\le\alpha$. 
Then $D$ is integrated over $D(a,{r_1})(F)\backslash D(a,{r_1})({\A})$.

Next, we perform a root exchange that is similar to the one performed before \eqref{wh9}. 
This implies that the integral \eqref{whca112} is zero for all choices of data if and only if the integral
\begin{multline}\label{whca113}
\int 
\overline{\varphi^{(r)}(g)}
\theta_{2(n+r-k)}^{(r)}\left(\begin{pmatrix} A&B&C\\ &I_a&B^*\\ &&A^*\end{pmatrix}\begin{pmatrix} I_{r_1}&&\\ &u&\\ &&I_{r_1}\end{pmatrix}
\begin{pmatrix}I_{r-k}&&\\&g&\\&&I_{r-k}\end{pmatrix}\right )\\
\widetilde{\psi}(A)\psi_{U_{1,r_1-k+1,n}}(u)\psi'(B)
\psi_\delta(C)\,d(...)
\end{multline}
is zero for all choices of data. Here the variable $B$ is integrated over  $\widetilde{B}({r_1},a)(F)\backslash \widetilde{B}({r_1},a)({\A})$ where $\widetilde{B}({r_1},a)$ is the 
subgroup of $\Mat_{{r_1}\times a}$ consisting of all matrices $B=(B_{\alpha,\beta})$ such that $B_{{r_1},\beta}=0$ for all $1\le\beta\le 2n+r_1-k+1$.
All other variables are integrated as in \eqref{whca112}.

Write $\widetilde{B}({r_1},a)=\widetilde{B}_1({r_1},a)\widetilde{B}_2({r_1},a)$ where $\widetilde{B}_1({r_1},a)$ consists of all matrices in $\widetilde{B}({r_1},a)$ with bottom row zero, and 
$\widetilde{B}_2({r_1},a)$ consists of all matrices $(B_{\alpha,\beta})\in \widetilde{B}({r_1},a)$ such that $B_{\alpha,\beta}=0$ for all $\alpha$ with $1\le \alpha\le r_1-1$, and for $\alpha=r_1$ and all 
$\beta$ with $1\le \beta\le 2n+r_1-k+1$. 
Recall that the group $U_{1,r_1,n-k+r_1+1}$ was defined in Section~\ref{basic} and 
each element $u\in U_{1,r_1,n-k+r_1+1}$ has a factorization \eqref{mat3}. Let $U_{1,r_1,n-k+r_1+1}^0$ denote the subgroup of 
$U_{1,r_1,n-k+r_1+1}$ consisting of $u$ such that in the factorization \eqref{mat3}, $Y=0$. 
It is not hard to check that this is the subgroup of $Sp_{2(n+r-k)}$ consisting of all matrices 
\begin{equation}\label{whca114} v=
\begin{pmatrix} A&B&C\\ &I_a&B^*\\ &&A^*\end{pmatrix},\qquad
A\in L_{r_1},\quad B\in \widetilde{B}_1({r_1},a),\quad
\ C\in \Mat_{r_1}^0.
\end{equation}
Thus integral \eqref{whca113}
is equal to
\begin{multline}\label{whca115}
\int 
\overline{\varphi^{(r)}(g)}
\theta_{2(n+r-k)}^{(r)}\left (v\begin{pmatrix} I_{r_1}&B&\\ &I_a&B^*\\ &&I_{r_1}\end{pmatrix}\begin{pmatrix} I_{r_1}&&\\ &u&\\ &&I_{r_1}\end{pmatrix}
\begin{pmatrix}I_{r-k}&&\\&g&\\&&I_{r-k}\end{pmatrix}\right )\\
\psi_{U_{1,r_1,n-k+r_1+1}^{0,\delta}}(v)
\psi_{U_{1,r_1-k+1,n}}(u)\psi'(B)\,dv\,dB\,du\,dg.
\end{multline}
Here $v$ is integrated over $U_{1,r_1,n-k+r_1+1}^0(F)\backslash U_{1,r_1,n-k+r_1+1}^0({\A})$ and 
$$\psi_{U_{1,r_1,n-k+r_1+1}^{0,\delta}}(v)=
\widetilde{\psi}(A)\psi_\delta(C)$$ on matrices $v$ given by \eqref{whca114}. 
The variable $B$ is integrated over $\widetilde{B}_2({r_1},a)(F)\backslash 
\widetilde{B}_2({r_1},a)({\A})$ and the character $\psi'$ is the restriction of the character of $B({r_1},a)$ to the subgroup 
$\widetilde{B}_2({r_1},a)$. The variables $u$ and $g$ are integrated as before. 

We observe that the set of all matrices of the form 
$$\begin{pmatrix} I_{r_1}&B&\\ &I_a&B^*\\ &&I_{r_1}\end{pmatrix}\begin{pmatrix} I_{r_1}&&\\ &u&\\ &&I_{r_1}\end{pmatrix}
\begin{pmatrix}I_{r-k}&&\\&g&\\&&I_{r-k}\end{pmatrix}$$
with $B\in \widetilde{B}_2({r_1},a), u\in U_{1,r_1-k+1,n},
g\in Sp_{2n}$
is a subgroup of ${\mathcal H}_{a+1}\rtimes Sp_a$, and the integration in \eqref{whca115} over the adelic quotient $U_{1,r_1,n-k+r_1+1}^0(F)\backslash U_{1,r_1,n-k+r_1+1}^0({\A})$ is a 
Fourier coefficient which corresponds to the unipotent orbit $((r-1)1^a)$. Thus we may apply Ikeda's work on Fourier-Jacobi coefficients \cite{I1}. This implies that the space of functions
\begin{multline}\label{whca116}
(B,u,g)\to\theta_{2n+r-2k+1}^{(2),\psi^\delta}
\left(l_{2n+r-2k+1}(B)u\begin{pmatrix}I_{\tfrac{r+1}{2}-k}&&\\&g&\\&&I_{\tfrac{r+1}{2}-k}\end{pmatrix}\right)\\
\int\limits_{U_{1,r_1,n+r_1-k}(F)\backslash U_{1,r_1,n+r_1-k}({\A})}
\overline{\theta_{2n+r-2k+1}^{(2),\psi^\delta}}\left(l_{2n+r-2k+1}(v')u\begin{pmatrix}I_{\tfrac{r+1}{2}-k}&&\\&g&\\&&I_{\tfrac{r+1}{2}-k}\end{pmatrix}\right)\\
\theta_{2(n+r-k)}^{(r)}\left(v'u\begin{pmatrix}I_{r-k}&&\\&g&\\&&I_{r-k}\end{pmatrix}\right)\psi_{U_{1,r_1,n+r_1-k}}(v')\,dv'
\end{multline}
is a dense subspace of the space of functions 
\begin{equation}\label{whca117}
(B,u,g)\to \int
\theta_{2(n+r-k)}^{(r)}\left (v\begin{pmatrix} I_{r_1}&B&\\ &I_a&B^*\\ &&I_{r_1}\end{pmatrix}\begin{pmatrix} I_{r_1}&&\\ &u&\\ &&I_{r_1}\end{pmatrix} \begin{pmatrix}I_{r-k}&&\\&g&\\&&I_{r-k}\end{pmatrix}\right)
\psi_{U_{1,r_1,n-k+r_1+1}^{0,\delta}}(v)\,dv
\end{equation}
where $v$ is integrated as in \eqref{whca115}.
It follows from Conjecture~\ref{conj20} that, as a function of $ug$, the integral in \eqref{whca116} is equal to a function of the form
$$\theta_{2n+r-2k+1}^{(2r),\psi^\delta}\left(u\begin{pmatrix}I_{\tfrac{r+1}{2}-k}&&\\&g&\\&&I_{\tfrac{r+1}{2}-k}\end{pmatrix}\right)$$
with $\theta_{2n+r-2k+1}^{(2r),\psi^\delta}$ in the space attached to the corresponding  theta representation.

We deduce that the integral \eqref{whca115} is zero for all choices of data if and only if the integral
\begin{multline}\label{whca118}
\int\limits_{Sp_{2n}(F)\backslash Sp_{2n}({\A})}\  
\int\limits_{U_{1,r_1-k+1,n}(F)\backslash U_{1,r_1-k+1,n}({\A})}\overline{\varphi^{(r)}(g)}\\
\theta_{2n+r-2k+1}^{(2r),\psi^\delta}\left(u\begin{pmatrix}I_{\tfrac{r+1}{2}-k}&&\\&g&\\&&I_{\tfrac{r+1}{2}-k}\end{pmatrix}\right)\psi_{U_{1,r_1-k+1,n}}(u)\\
\int\limits_{\widetilde{B}_2({r_1},a)(F)\backslash 
\widetilde{B}_2({r_1},a)({\A})}
\theta_{2n+r-2k+1}^{(2),\psi^\delta}
\left(l_{2n+r-2k+1}(B)u\begin{pmatrix}I_{\tfrac{r+1}{2}-k}&&\\&g&\\&&I_{\tfrac{r+1}{2}-k}\end{pmatrix}\right))\psi'(B)\,dB\,du\,dg
\end{multline}
is zero for all choices of data. 

To complete the proof of the Proposition we need to compute the inner integration in \eqref{whca118}. 
We do this by unfolding the theta series. Recall that the action of the Weil representation is normalized as in \cite{G-R-S2}, Section 1, part {\bf 6}. From the definition of 
$\widetilde{B}_2({r_1},a)$ we deduce that 
$$l_{2n+r-2k+1}(B)=(0_{2n+r_1-k+1},B_{r_1,2n+r_1-k+2},B_{r_1,2n+r_1-k+3},\ldots,B_{r_1,2n+r-2k+2},0).$$
With this notation, $\psi'(B)=\psi(B_{r_1,2n+r_1-k+2})$. 
Unfolding the theta series, the inner integration in integral \eqref{whca118} is equal to
\begin{multline}\label{whca119}
\int\sum_{\xi\in F^{n+r_1-k+1}}\omega_{\psi^\delta}\left(l_{2n+r-2k+1}(B)u\begin{pmatrix}I_{\tfrac{r+1}{2}-k}&&\\&g&\\&&I_{\tfrac{r+1}{2}-k}\end{pmatrix}\right) \phi(\xi)\psi'(B)\,dB\\
=\sum_{\xi\in F^n}\omega_{\psi^\delta}\left(u\begin{pmatrix}I_{\tfrac{r+1}{2}-k}&&\\&g&\\&&I_{\tfrac{r+1}{2}-k}\end{pmatrix}\right)\phi(0_{r_1-k},\delta,\xi)
\end{multline}
where we have used the action of the Weil representation to derive this last equality. However, for an appropriate Schwartz function, the right hand side of 
equation \eqref{whca119} is equal to $\theta_{2n}^{(2),\psi^\delta}(l_{2n}(u)g)$. This completes the proof of the Proposition for the case when $k<(r+1)/2$. 

When $k=(r+1)/2$, the proof is simpler. Starting with integral \eqref{whca13}, from the definition of the character \eqref{whca12} we see that no conjugation by a 
Weyl element is needed. The first step is to use the result of \cite{I1} as described in 
\eqref{whca116} and  \eqref{whca117}. Notice that in this case $B=0$. Thus we obtain \eqref{whca19} as claimed.
\end{proof}

\section{The case $k> (r+1)/2$}\label{case2}
We study this case using a similar approach to the case $k\leq (r+1)/2$.  First, for $0\le j\le \min(n,k-(r+1)/2)$ we define a family of integrals denoted by ${\mathcal L}(j)$. 
Then we prove that ${\mathcal L}(j)$ is zero for all choices of data if and only if ${\mathcal L}(j+1)$ is zero for all choices of data. 
However, this case requires a new ingredient, namely the Fourier expansion along a unipotent subgroup of a symplectic group, and makes use of the cuspidality of $\pi$ in an essential way.

We start by fixing some notation. Fix an integer $j$ in the range $1\le j\le \min(n,k-(r+1)/2)$.  Recall that the group $U_{n,j}$ and its character $\psi_{U_{n,j}}$ were defined in Section~\ref{intro}.
We let $l_{2n-2j}$ denote the homomorphism from $U_{n,j}$ onto ${\mathcal H}_{2n-2j+1}$, the Heisenberg group of $2n-2j+1$ variables, given 
on $u=(u_{\alpha,\beta})\in U_{n,j}$ by 
\begin{equation}\label{the-hom-ell}l_{2n-2j}(u)=
\begin{cases}(u_{j,j+1},u_{j,j+2},\ldots,u_{j,2n-2j},u_{j,2n-2j+1})&\text{for $j<n$}\\u_{n,n+1}&\text{for $j=n$.}\end{cases}
\end{equation}
We identify ${\mathcal H}_{2n-2j+1}$ with an upper subgroup of $Sp_{2n-2j+2}$ as in \cite{G-R-S4}, p.\ 8. 

Recall that $L_{2k-r-2j+2}$ denotes the maximal unipotent subgroup of $GL_{2k-r-2j+2}$ and 
$V_{2k-r-2j+1}$ denotes the maximal unipotent subgroup of $SO_{2k-r-2j+1}$.
Let $V_{0,2k-r-2j+2}$ be the subgroup of $L_{2k-r-2j+2}$ generated by all matrices of the form
\begin{equation}\label{end0}
v_0=\begin{pmatrix} 1&a\\ &v\end{pmatrix},\qquad a\in \Mat_{1\times (2k-r-2j+1)},  v\in V_{2k-r-2j+1}.
\end{equation}
Let $\psi_{V_{0,2k-r-2j+2},\delta}$ be the character of $V_{0,2k-r-2j+2}(F)\backslash V_{0,2k-r-2j+2}(\A)$ which is the trivial extension of the Whittaker character of $V_{2k-r-2j+1}(\A)$, i.e.\ $\psi_{V_{0,2k-r-2j+2},\delta}(v_0)=\psi_{V_{2k-r-2j+2},\delta}(v)$. 
Also, let $\psi'_{0,V_{2k-r-2j},\delta}$ be the character of the group $V_{0,2k-r-2j}(F)\backslash V_{0,2k-r-2j}(\A)$ defined as follows. Write $v_0$ as in 
\eqref{end0} with $j+1$ in place of $j$. Then we set
\begin{equation}\label{psi-prime}
\psi'_{0,V_{2k-r-2j},\delta}(v_0)=\psi(a_{1,1})\psi_{V_{2k-r-2j-1},\delta}(v).
\end{equation}

Finally, we define a unipotent subgroup of a symplectic group, denoted $U_{0,j}$, and a character $\psi_{U_{0,j}}$ of the adelic quotient
of this group, as follows. Consider the unipotent group $U_{a,b,c}$ where $a=2k-r-2j+2, b=r_1$ and $c=n-j+1$. 
For $u\in U_{a,b,c}$, we have the factorization \eqref{mat3}. Let $R$ denote the subgroup of $\Mat_{(2k-r-2j+2)\times 2(n-j+1)}$ of all matrices 
$Y=(Y_{\alpha,\beta})\in \Mat_{(2k-r-2j+2)\times 2(n-j+1)}$ such that $Y_{\alpha,\beta}=0$ for all $(2k-r-2j+5)/2\le \alpha\le 2k-r-2j+2$ and $1\le \beta\le 2n-2j+1$, and such that
$Y_{\alpha,1}=0$ for all $2\le\alpha\le (2k-r-2j+3)/2$. 
We define $U_{0,j}$ to be the subgroup of $U_{a,b,c}$ such that if $u$ is factored as in \eqref{mat3}, then $Y\in R$.
Also, for $Y\in R(\A)$ let $\psi_R(Y)=\psi(Y_{1,1}+Y_{2k-r-2j+2,2n-2j+2})$ and for 
$Z=(Z_{\alpha,\beta})\in \Mat_{2k-r-2j+2}^0(\A)$, let $\psi_0(Z)=\psi(Z_{2,1}+\cdots +Z_{(2k-r-2j+3)/2,(2k-r-2j+1)/2})$.
Then we let $\psi_{U_{0,j}}$ be the character of $U_{0,j}(F)\backslash U_{0,j}(\A)$ given by
\begin{equation}\label{psi-111}
\psi_{U_{0,j}}(u)=\psi(\text{tr}(X_1+\cdots +X_{r_1}))\psi_R(Y)\psi_0(Z).
\end{equation}

We now introduce the integrals ${\mathcal L}(j)$ for each $j$, 
$0\le j\le \min(n,k-(r+1)/2)$.
First, ${\mathcal L}(0)$ is defined to be the integral $L(r_1-1)=L((r-3)/2)$; see equation \eqref{wh4}.  Next, for $1\le j\le \min(n,k-(r+1)/2)$, 
let $a_j=2(n-j)+(2k-r-2j+2)(r-1)+2$, and for $v_0\in V_{0,2k-r-2j+2}$ and $h\in Sp_{2(n-j+1)}$, let
\begin{equation*}
\iota_{4,j}(v_0,h)=\text{diag}(v_0,\ldots,v_0,h,v_0^*\ldots,v_0^*)\in Sp_{a_j}.
\end{equation*} 
Here, $v_0$ appears $r_1$ times. 
The map $\iota_{4,j}$ is identical to $\iota_2$ except for the size of the groups (which, in each variable, depends on $j$) and that $v_0$ is not necessarily orthogonal.
Since $v_0$ is upper unipotent, it is split in the covering group by the
trivial section, and the map $\iota_{4,j}$  
extends to a one-to-one homomorphism (which we continue to denote $\iota_{4,j}$) from $V_{0,2k-r-2j+2}(\A)\times Sp^{(r)}_{2(n-j+1)}(\A)$ to $Sp^{(r)}_{a_j}(\A)$.
Then we define
\begin{multline}\label{end1}
{\mathcal L}(j)=\int\limits_{Sp_{2n-2j}(F)\backslash Sp_{2n-2j}({\A})}\ 
\int\limits_{U_{n,j}(F)\backslash U_{n,j}({\A})}\  
\int\limits_{V_{0,2k-r-2j+2}(F)\backslash V_{0,2k-r-2j+2}({\A})}\ 
\int\limits_{U_{0,j}(F)\backslash U_{0,j}({\A})} \\ 
\overline{\varphi^{(r)}}\left(u\begin{pmatrix}I_j&&\\&g&\\&&I_j\end{pmatrix}\right)\theta^{(r)}_{a_j}\left(u_0\iota_{4,j}\left(v_0,l_{2n-2j}(u)\begin{pmatrix}1&&\\&g&\\&&1\end{pmatrix}\right)\right)\\
\psi_{U_{0,j}}(u_0)\psi_{V_{0,2k-r-2j+2},\delta}(v_0)\psi_{U_{n,j}}(u)\,du_0\,dv_0\,du\,dg.
\end{multline}
 In \eqref{end1}, the integrand is a function of $g=(g_1,\zeta)\in Sp^{(r)}_{2n}(\A)$, but, as earlier in this paper,
the integrand is independent of $\zeta\in\mu_r$.  Thus we write the integral over the group and need not keep additional track of the cover.

The analysis of these integrals is by induction, and is given by the following key Lemma.
\begin{lemma}\label{lem4}
For $0\le j< \min(n,k-(r+1)/2)$, the integral ${\mathcal L}(j)$ is zero for all choices of data if and only if the integral ${\mathcal L}(j+1)$ is zero for all choices of data.
\end{lemma}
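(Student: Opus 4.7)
My plan closely mirrors that of Lemma~\ref{lem1}: apply a sequence of root exchanges to $\mathcal{L}(j)$, conjugate by a carefully chosen Weyl element, use the smallness of the theta representation from Theorem~\ref{theta05} part~\ref{theta05-1} together with Proposition~1 of \cite{F-G1} to descend the theta from $Sp^{(r)}_{a_j}$ to $Sp^{(r)}_{a_{j+1}}$, and finally arrive at $\mathcal{L}(j+1)$. The new ingredient, announced at the start of this section, is a Fourier expansion along a unipotent subgroup of $Sp_{2n-2j}$ containing the variable $g$, together with the essential use of the cuspidality of $\pi^{(r)}$.

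First I would introduce auxiliary unipotent subgroups $M_i$ and $Q_i$ of $Sp_{a_j}$ adapted to the shape of $\iota_{4,j}$ and to the factorization of elements of $U_{0,j}$, analogous to those in the proof of Lemma~\ref{lem1}. Using \cite{G-R-S4} Lemma~7.1 and Corollary~7.1, the integration over $U_{0,j}$ together with the $Q_i$ can be enlarged to introduce an integration over $\prod_i M_i$ without altering the vanishing properties. Next I would choose a Weyl element $w_0$ in $Sp_{a_j}$ of the form \eqref{weyl1}, adapted to the present indices, and use the left $Sp_{a_j}(F)$-invariance of $\theta^{(r)}_{a_j}$ to conjugate. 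After conjugation the integrand takes the shape of \eqref{wh8}, with a block-triangular structure involving matrices $A,B,C$ above the diagonal, $D,E$ below it, and a Whittaker character on a $GL_r$-type unipotent. A second round of root exchanges would then move $D$ and $E$ past the theta function, using Theorem~\ref{theta05} part~\ref{theta05-1} to discard the inner Fourier coefficients that correspond to unipotent orbits larger than $\mathcal{O}_c$, and Proposition~1 of \cite{F-G1} would descend $\Theta^{(r)}_{a_j}$ to $\Theta^{(r)}_{a_j-2r}=\Theta^{(r)}_{a_{j+1}}$, mirroring the passage from \eqref{wh7} to \eqref{wh11}.

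The step with no analogue in Lemma~\ref{lem1} is the Fourier expansion along a unipotent subgroup of $Sp_{2n-2j}$. After the above manipulations the integrand carries $\overline{\varphi^{(r)}}(u\,\diag(I_j,g,I_j))$, and I would expand along the row and column of $Sp_{2n-2j}$ needed to enlarge $U_{n,j}$ to $U_{n,j+1}$. The constant term vanishes by the cuspidality of $\pi^{(r)}$; the nonzero orbits, collapsed by the action of the rational split torus, produce exactly the character $\psi_{U_{n,j+1}}$ and the enlarged unipotent $U_{n,j+1}$ appearing in $\mathcal{L}(j+1)$. The shrinking of $V_{0,2k-r-2j+2}$ to $V_{0,2k-r-2j}$ is effected by the Weyl conjugation, which absorbs the top row $a$ in \eqref{end0} and transforms $\psi_{V_{0,2k-r-2j+2},\delta}$ into the character $\psi'_{0,V_{2k-r-2j},\delta}$ of \eqref{psi-prime} on the smaller group. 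Matching the characters $\psi_{U_{0,j+1}}$, $\psi_{V_{0,2k-r-2j},\delta}$, and $\psi_{U_{n,j+1}}$ at the end yields $\mathcal{L}(j+1)$.

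The main obstacle will be the bookkeeping of the many unipotent subgroups and characters as they are transformed by conjugations, root exchanges, and the Fourier expansion, and verifying that the hypotheses of \cite{G-R-S4} Lemma~7.1 are satisfied at each step so that Corollary~7.1 can be invoked to preserve the vanishing equivalence. A secondary subtlety is that $v_0 \in V_{0,2k-r-2j+2}$ is upper unipotent but not orthogonal, so $\iota_{4,j}$ does not land in the standard orthogonal subgroup; one must check that the cocycles on the $r$-fold cover, as well as the characters, behave correctly both under $w_0$-conjugation and in the step that combines the Fourier expansion along $Sp_{2n-2j}$ with the cuspidality of $\pi^{(r)}$.
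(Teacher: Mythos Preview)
Your outline captures the broad strategy, but there are two genuine gaps that would prevent the argument from going through as written.

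First, you place the Fourier expansion \emph{after} the full root exchange and the descent via Proposition~1 of \cite{F-G1}. In fact the root exchange on the $D,E$ block cannot be completed before the Fourier expansion: the entries of the $(r+1)/2$-th row of $D$ lying over the $2n$ (respectively $2n-2j$) columns coming from the symplectic variable $g$ have no partners in $B$ or $C$ to exchange with, because those positions are occupied by the embedding $\iota_{4,j}(\cdot,g)$. It is precisely this obstruction that forces the Fourier expansion along a Heisenberg-type subgroup $S_j\subset Sp_{a_j}$ at this intermediate stage. Only after treating the two $Sp_{2n-2j}(F)$-orbits in that expansion can one finish the root exchange and then apply Proposition~1 of \cite{F-G1} to descend the theta. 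Your ordering would leave you unable to carry out the descent.

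Second, and more seriously, your claim that ``the constant term vanishes by the cuspidality of $\pi^{(r)}$'' is false when $j=0$. For $j\ge 1$ the trivial orbit indeed produces, after a change of variables absorbing $l_{2n-2j}(u)$ into $S_j$, the constant term of $\overline{\varphi^{(r)}}$ along the unipotent radical of the parabolic with Levi $GL_j\times Sp_{2n-2j}$, and cuspidality applies. But for $j=0$ there is no such parabolic: the trivial orbit contribution is a nonzero integral of the shape \eqref{end4}, and showing it vanishes requires a separate inductive argument (a family of integrals $K(j)$, further root exchanges and Weyl conjugations, and ultimately Theorem~\ref{theta05} part~\ref{theta05-1} applied to a coefficient attached to an orbit of the form $((r+1)(2k-r-1)1^{\cdots})$ or an analogous degeneration). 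This is a substantial piece of the proof that your proposal does not anticipate. You should treat $j=0$ and $j\ge 1$ separately and supply the missing vanishing argument for the trivial orbit in the base case.
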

\begin{proof}
We start by establishing the Lemma for $j=0$. That is, we prove that ${\mathcal L}(0)$ is zero for all choices of data if and only if ${\mathcal L}(1)$ is zero for all choices of data. 
By definition,  ${\mathcal L}(0)=L((r-3)/2)$. Using the value $j=(r-3)/2$ in \eqref{wh4},  we have
\begin{multline}\label{end2}
{\mathcal L}(0)=\int\limits_{Sp_{2n}(F)\backslash Sp_{2n}({\A})}
\int\limits_{V_{2k-r+3}(F)\backslash V_{2k-r+3}({\A})}
\int\limits_{U_{r_0}(F)\backslash U_{r_0}({\A})}\overline{\varphi^{(r)}(g)}
\\ \theta_{a_0}^{(r)}(u\iota_{3,r_0}(v,g))\psi_{U_{r_0}}(u)
\psi_{V_{2k-r+3},\delta}(v)\,du\,dv\,dg.
\end{multline}
Here $a_0=2n+2k(r-1)-r(r-3)$, $r_0=r_1-1$, and for $v\in V_{2k-r+3}$ and $g\in Sp_{2n}$ we recall that  $\iota_{3,r_0}(v,g)=\text{diag}(v_0,\ldots,v_0,v,g,v^*,v_0^*\ldots,v_0^*)$
where each $v_0$ is repeated $r_0$ times (see \eqref{iota-3-j}).

The first part of the computation here is similar to the first part of the proof of Lemma~\ref{lem1}. 
For $\alpha=1,2$, let $M_\alpha$ denote the unipotent subgroup of $L_{2k-r+1+\alpha}$ consisting of all matrices of the form
$$\begin{pmatrix} 1&a\\ &I_{2k-r+\alpha}\end{pmatrix},\qquad a\in \Mat_{1\times (2k-r+\alpha)}.$$
As in the proof of Lemma~\ref{lem1}, let $M$  be the subgroup of $Sp_{a_0}$ of matrices of the form
$$m=\text{diag}(m_1,m_2,\cdots,m_{r_1},I_{2n},m_{r_1}^*,\cdots,m_{2}^*,m_{1}^*)$$
with $m_i\in M_1$ for $1\le i\le (r-3)/2$ and $m_{r_1}\in M_2$.
Also, let $Q_i$, $1\le i\le r_1$, denote the following subgroups of $Sp_{a_0}$. 
For $1\le i\le (r-5)/2$, $Q_i$ is the subgroup of all matrices of the form
$$u^i_{(2k-r+2)(i-1),2k-r+2,2k-r+2,a_i}(X_i),\qquad X_i=\begin{pmatrix} 0&\\ a&0_{2k-r+1}\end{pmatrix},\quad a\in \Mat_{(2k-r+1)\times 1}$$
with $a_i=a_0-2(2k-r+2)(i+1)$ (see \eqref{mat5});
for $i=(r-3)/2$, the group $Q_i$ is generated by
$$u^i_{(2k-r+2)(r-5)/2,2k-r+2,2k-r+3,a_i}(X_i),\qquad X_i=\begin{pmatrix} 0&\\ a&0_{2k-r+2}\end{pmatrix},\quad a\in \Mat_{(2k-r+2)\times 1}$$
with $a_i=a_0-(2k-r+2)(r-3)-2(2k-r+3)$; and for $i=(r-1)/2$, $Q_i$ is the subgroup of matrices of the form
$$u'_{2k-r+3,n}(0,Z),\qquad Z=\begin{pmatrix} 0&&\\ a&0_{2k-r+1}&\\ b&a^*&0
\end{pmatrix}, \quad b\in \Mat_1,\quad a\in \Mat_{(2k-r+1)\times 1}.$$

Performing root exchange between these two groups, we obtain that ${\mathcal L}(0)$ is zero for all choices of data if and only if an integral similar to \eqref{wh7} is zero for all 
choices of data with $j=(r-3)/2$. We write this integral after conjugating by a certain Weyl element. 
The Weyl elements $w_0^j$ were defined above in equation \eqref{weyl1} for $j$ in the range $1\le j< (r-3)/2$.  As mentioned there, 
that description still determines the matrix uniquely if $j=(r-3)/2$. We use the Weyl element $w_0^{(r-3)/2}$ now.

Performing the above root exchange, and then conjugating by $w_0^{(r-3)/2}$, we deduce that integral \eqref{end2} is zero for all choices of data if and only if the integral 
\begin{multline}\label{end3}
\int 
\overline{\varphi^{(r)}(g)}\widetilde{\psi}(A)\psi_{U}(u)\psi'_{0,V_{2k-r},\delta}(v_0)\psi'(D)\\
\theta_{a_0}^{(r)}\left (\begin{pmatrix} A&B&C\\ &I_{a_1}&B^*\\ &&A^*\end{pmatrix}\begin{pmatrix} I_r&B(v_0)&\\ &I_{a_1}&B^*(v_0)\\ &&I_r\end{pmatrix}
\begin{pmatrix} I_r&&\\ D&I_{a_1}&\\ E&D^*&I_r\end{pmatrix}\begin{pmatrix}I_r&&\\&u\iota_{4,1}(v_0,g)&\\&&I_r\end{pmatrix}\right ) \,
d(...)
\end{multline}
is zero for all choices of data. In \eqref{end3}, the variables $A,B,C,D$ and $E$ are integrated over the same domain as in \eqref{wh8} 
with $j=(r-3)/2$. We describe them explicitly. First $A$ is integrated over $L_r(F)\backslash L_r({\A})$. The character $\widetilde{\psi}(A)$ is the Whittaker character. 
Let $B(r,a_0)$ denote the subgroup of $\Mat_{r\times (a_0-2r)}$ which consists of all matrices $B=B_{\alpha,\beta}$ such that $B_{i,(i-1)(2k-r)}=0$ for all $1\le i\le r_1$ and
$B_{i,(i-1)(2k-r)+2n}=0$ for all $r_1+1\le i\le r$. The variable $B$ is integrated over $B(r,a_0)(F)\backslash B(r,a_0)({\A})$.
Let $C(r)$ denote the subgroup of $\Mat^0_r$ consisting of all matrices 
$$C=\begin{pmatrix} C_1&C_2\\ C_3&C^*_1\end{pmatrix},\qquad C_1\in \Mat_{r_1,r_1+1},\quad C_2\in \Mat^0_{r_1},\quad C_3\in \Mat^0_{r_1+1}$$
such that $C_3=0$, and such that, writing $C_1=(C_1[\alpha,\beta])$, one has
$C_1[\alpha,\beta]=0$ for all $1\le \alpha\le r_1$ and 
$\beta<\alpha$. The variable $C$ is integrated over $C(r)(F)\backslash C(r)({\A})$. The matrices $D$ and $E$ are integrated over $D(r,a_0)(F)\backslash D(r,a_0)({\A})$ 
and $E(r)(F)\backslash E(r)({\A})$, respectively, defined following \eqref{wh8}. The character $\psi'(D)$ is given by $\psi'(D)=\psi(D_{\alpha,\beta})$ 
where $\alpha=1+(2k-r)(r-3)/2$ and $\beta=(r+3)/2$. The variable $u$ is integrated over $U(F)\backslash U({\A})$, where $U$ is the subgroup of 
$U_{2k-r,r_1,n}\subseteq Sp_{2n+(2k-r)(r-1)}$ consisting of all 
matrices \eqref{mat3} such that $Y\in \Mat_{(2k-r)\times 2n}$ has bottom $(2k-r-1)/2$ rows all zeroes.
The character $\psi_U$ is given by 
\begin{equation}\label{psi-sub-U}\psi_U(u)=\psi\left(\text{tr}(X_1+\cdots+X_{r_1-1})\right)\psi_0(Z)
\end{equation} 
(using the factorization \eqref{mat3} of $u$ with $a=2k-r$, $b=r_1$, $c=n$).
The variable $v_0$ is integrated 
over $V_{0,2k-r}(F)\backslash V_{0,2k-r}({\A})$;
the character $\psi'_{0,V_{2k-r},\delta}$ was defined in \eqref{psi-prime}. The $g$ variable is integrated over $Sp_{2n}(F)\backslash Sp_{2n}({\A})$. 

Finally, we describe the matrix $B(v_0)\in \Mat_{r\times (a_0-2r)}$. 
The first $n+a_0/2$ columns of $B(v_0)$ are all zero. Write $B=\begin{pmatrix} 0&B_1\end{pmatrix}$ where $B_1\in \Mat_{r\times (a_0/2-n-2r)}$. Then the first $(r+1)/2$ rows of $B_1$ 
are zero. Write $B_1=\left(\begin{smallmatrix} 0\\ B_{1,1}\end{smallmatrix}\right)$ where $B_{1,1}\in \Mat_{(r-1)/2\times (a_0/2-n-2r)}$. Recall that in \eqref{end2}, $v=(v_{\alpha,\beta})$ is a 
matrix in $V_{2k-r+3}$. For such a matrix, write 
\begin{equation}\label{a-v-zero}
a(v_0)=(v_{2,2k-r+2},v_{3,2k-r+2},\ldots,v_{2k-r+1,2k-r+2})\in \Mat_{1\times (2k-r)}.
\end{equation}
Then 
$$B_{1,1}=\begin{pmatrix} a(v_0)&0&0&\ldots&0\\ 0&a(v_0)&0&\ldots&0\\ \vdots&\vdots&\vdots&\ddots&\vdots\\ 0&0&0&\ldots &a(v_0)\end{pmatrix}.$$
To simplify the notation and for later use as well, for $0\le j < \min(n,k-(r+1)/2)$, given a matrix $B(v_0)\in \Mat_{r\times a_{j+1}}(\A)$, we write
\begin{equation}\label{mat100}
p_j(B(v_0))=\begin{pmatrix} I_r&B(v_0)&\\ &I_{a_{j+1}}&B^*(v_0)\\ &&I_r\end{pmatrix}\in Sp_{a_j}(\A).
\end{equation}

We now proceed in the same way as we did in analyzing \eqref{wh8} above. That is, we perform root exchange between variables in the $D$ and $E$ matrices and suitable upper triangular  matrices. 
The process for the first $r_1$ rows is similar to the one performed for \eqref{wh8}.  
Similarly, we exchange the last $a_0/2+n$ entries of the $(r+1)/2$-th row. 
However, in the case at hand it is not possible to perform root exchange for the entries in positions $((r+1)/2, a_0/2-n+\alpha)$ of the left-most 
matrix in the argument of $\theta_{a_0}^{(r)}$ in the integral \eqref{end3}, for $1\le \alpha\le 2n$. 

We conclude that the integral ${\mathcal L}(0)$ is zero for all choices of data if and only if an integral, denoted $I$, is zero for all choices of data. The integral $I$ has
the same form as integral \eqref{end3} but with a different domain of integration.   
For $I$, the variable $B$ is integrated over  $B_1(r,a_0)(F)\backslash B_1(r,a_0)({\A})$, where $B_1(r,a_0)$ is defined as follows. 
If $B=(B_{\alpha,\beta})\in B_1(r,a_0)$, then $B_{(r+1)/2,\beta}=0$ for all $1\le\beta\le 2n+(2k-r)r_1$, and the bottom $r_1$ rows of $B$ satisfy the same vanishing
conditions as for $B(r,a_0)$. 
The variable $C$ is now integrated over $C_1(r)(F)\backslash C_1({\A})$ where $C_1(r)$ is the subgroup of $\Mat^0_r$ such that $C_{\alpha,\beta}=0$ for 
all $r_1\le\alpha\le r$ and $1\le\beta\le r_1$. The variable $D$ is integrated over $D_1(r,a_0)(F)\backslash D_1(r,a_0)({\A})$ where $D_1(r,a_0)$ is the 
subgroup of $D(r,a_0)$ of all matrices whose first $r_1+1$ columns are zeros, and similarly we define the group $E_1(r)$. 

Let $S_0$ denote the subgroup of $Sp_{a_0}$ consisting of all matrices of the form 
$$I_{a_0}+\sum_{i=1}^{2n}\alpha_ie'_{r_1+1,a_0/2-n+i}+\beta e_{r_1+1,a_0-r_1}.$$
Its center, denoted by $Z(S_0)$, is the group generated by all the above matrices such that $\alpha_i=0$ for all $i$. It is a subgroup of $C_1(r)$. 
This means that we may expand integral $I$ along $S_0(F)Z(S_0)({\A})\backslash S_0({\A})$. The group $Sp_{2n}(F)$ acts on this expansion with two orbits. 

First, we consider the contribution from the trivial orbit.  In this case, we can further perform a root exchange similar to the analysis of integral \eqref{wh8} above. 
Arguing as in \eqref{wh9}-- \eqref{wh11}, the vanishing of this contribution is equivalent to the vanishing of the integral  
\begin{equation}\label{end4}
\int \overline{\varphi^{(r)}(g)}
\theta_{a_1}^{(r)}(u\iota_{4,1}(v_0,g))\psi_{U}(u)\psi'_{0,V_{2k-r},\delta}(v_0)\,
du\,dv_0\,dg,
\end{equation}
with the domain of integration as in \eqref{end3}. 
The proof that integral \eqref{end4} is zero for all choices of data is similar to the computations we performed above. 
It will be convenient to postpone the proof and give the details later. We do so after Proposition~\ref{prop8} below.

Assuming this vanishing, we conclude that the integral $I$ is equal to the integral
\begin{multline}\label{end5}
\int\limits_{U_{n,1}(F)Sp_{2n-2}(F)\backslash Sp_{2n}({\A})}\ \int 
\overline{\varphi^{(r)}(g)}\widetilde{\psi}(A)\psi_{U}(u)\psi'_{0,V_{2k-r},\delta}(v_0)\psi'(D)\psi_1(B)\\
\theta_{a_0}^{(r)}\left (\begin{pmatrix} A&B&C\\ &I_{a_1}&B^*\\ &&A^*\end{pmatrix}p_0(B(v_0))\begin{pmatrix} I_r&&\\ D&I_{a_1}&\\ E&D^*&I_r\end{pmatrix}
\begin{pmatrix}I_r&&\\&u\iota_{4,1}(v_0,g)&\\&&I_r\end{pmatrix}\right ) \,
d(...).
\end{multline}
Here, all variables except the $g$ and $B$ variables, are integrated as in the integral $I$. The $B$ variable is integrated over $B_2(r,a_0)(F)\backslash B_2(r,a_0)({\A})$, 
where $B_2(r,a_0)=B_1(r,a_0)(Z(S_0)\backslash S_0)$. The group $B_2(r,a_0)$ may be identified with all $B=(B_{\alpha,\beta})\in \Mat_{r,a_0}$ such that $B_{r_1+1,\beta}=0$ for all $1\le \beta\le (2k-r)r_1$. 
All other rows are the same as for matrices in $B_1(r,a_0)$. Also, the character $\psi_1(B)=\psi(B_{r_1+1,(2k-r)r_1+1})$. 

Write 
$$\int\limits_{U_{n,1}(F)Sp_{2n-2}(F)\backslash Sp_{2n}({\A})}=
\int\limits_{U_{n,1}({\A})Sp_{2n-2}({\A})\backslash Sp_{2n}({\A})}\ \ 
\int\limits_{U_{n,1}(F)Sp_{2n-2}(F)\backslash U_{n,1}({\A})Sp_{2n-2}({\A})}.$$
Arguing as in \cite{G-S}, Section~7, we deduce that the integral \eqref{end5} is zero for all choices of data if and only if the integral 
\begin{multline}\label{end6}
\int\limits_{Sp_{2n-2}(F)\backslash Sp_{2n-2}({\A})}\int\limits_{U_{n,1}(F)\backslash U_{n,1}({\A})} \int 
\overline{\varphi^{(r)}}\left(u_1\begin{pmatrix}1&&\\&g&\\&&1\end{pmatrix}\right)\widetilde{\psi}(A)\psi_{U}(u)\psi'_{0,V_{2k-r},\delta}(v_0)\psi'(D)\psi_1(B)\\
\theta_{a_0}^{(r)}\left (\begin{pmatrix} A&B&C\\ &I_{a_1}&B^*\\ &&A^*\end{pmatrix}p_0(B(v_0))\begin{pmatrix} I_r&&\\ D&I_{a_1}&\\ E&D^*&I_r\end{pmatrix}
\begin{pmatrix}I_r&&\\&u\iota_{4,1}(v_0,l_{2n-2}(u_1)g)&\\&&I_r\end{pmatrix}\right ) \,
d(...)
\end{multline}
is zero for all choices of data.

Let $x(1)=I_{a_0}-e'_{1+a_0/2,r_1+2}$. Then $\theta_{a_0}^{(r)}(h)=\theta_{a_0}^{(r)}(x(1)h)$. Using this in integral \eqref{end6} and then conjugating $x(1)$ to the right, we obtain the integral
\begin{multline}\label{end7}
\int\limits_{Sp_{2n-2}(F)\backslash Sp_{2n-2}({\A})}\int\limits_{U_{n,1}(F)\backslash U_{n,1}({\A})} \int 
\overline{\varphi^{(r)}}\left(u_1\begin{pmatrix}1&&\\&g&\\&&1\end{pmatrix}\right)\widetilde{\psi}(A)\psi'_{U}(u)\psi'_{0,V_{2k-r},\delta}(v_0)\psi'(D)\\ 
\theta_{a_0}^{(r)}\Bigg(\begin{pmatrix} A&B&C\\ &I_{a_1}&B^*\\ &&A^*\end{pmatrix}p_0(B(v_0))\begin{pmatrix} I_r&&\\ D&I_{a_1}&\\ E&D^*&I_r\end{pmatrix}
\begin{pmatrix}I_r&&\\&uu'_{2k-r,n}(Y(v_0),0)&\\&&I_r\end{pmatrix}\\ \iota_{4,1}(v_0,l_{2n-2}(u_1)g)x(1)\Bigg)\,d(...).
\end{multline}
We highlight the changes in the characters. First $\psi_1(B)$ is cancelled. 
Second, the character $\psi'_{U}$ is defined as follows. Given the decomposition of $u\in U_{2k-r,r_1,n}(\A)$ as in \eqref{mat3}, we have $\psi'_{U}(u)=\psi_{U}(u)\psi(Y_{1,1})$. 
Also, we note that the matrix $u'_{2k-r,n}(Y(v_0),0)$ has the property that $Y(v_0)\in \Mat_{2k-r,2n}(\A)$ has all entries zero except the last column; this
last column is equal to $J_{2k-r}a(v_0)^t$. 
(The vector $a(v_0)$ is given in \eqref{a-v-zero}.)

At this point we carry out a root exchange between variables in $D$ and $E$ with suitable upper triangular matrices. 
This is similar to the steps in the proof of Lemma~\ref{lem1} that go from \eqref{wh8} to \eqref{wh9}.  We then argue as in that proof, from \eqref{wh9} to \eqref{wh11}.
We conclude that in the case at hand, the integral \eqref{end7} is zero for all choices of data if and only if the integral
\begin{multline}\label{end8}
\int\limits_{Sp_{2n-2}(F)\backslash Sp_{2n-2}({\A})}\int\limits_{U_{n,1}(F)\backslash U_{n,1}({\A})}  \int 
\overline{\varphi^{(r)}}\left(u_1\begin{pmatrix}1&&\\&g&\\&&1\end{pmatrix}\right)\psi'_{U}(u)\psi'_{0,V_{2k-r},\delta}(v_0)\\ 
\theta_{a_1}^{(r)}(uu'_{2k-r,n}(Y(v_0),0)\iota_{4,1}(v_0,l_{2n-2}(u_1)g))\,d(...)
\end{multline}
is zero for all choices of data. (All other domains of integration here are the same as in \eqref{end7}.)

We perform one final root exchange, as follows. Let $R_1$ denote the subgroup of $V_{0,2k-r}$ consisting of all matrices  of the form
$$\begin{pmatrix} 1&a&0\\ &I_{(2k-r-1)/2}&0\\ &&I_{(2k-r-1)/2}\end{pmatrix},\qquad a\in \Mat_{1\times (2k-r-1)/2}. $$ 
Let $R_2$ denote the subgroup of $U$ consisting of all elements of the form
$$u'_{2k-r,n}(Y,0),\qquad Y=\alpha_1e_{2,1}+\alpha_2e_{3,1}+\cdots +\alpha_{(2k-r-1)/2}e_{(2k-r-1)/2,1}$$
where $e_{\alpha,\beta}$ is the $(2k-r)\times 2n$ matrix whose $(\alpha,\beta)$ entry is $1$ and with all other entries $0$.
Using $\psi'_{U}$, we perform a root exchange between $R_1$ and $R_2$. From this we deduce that the integral \eqref{end8} is zero for all choices of data if and only if the integral
\begin{multline}\label{end9}
\int
\int\limits_{R_2({\A})U(F)\backslash U({\A})}
\int\limits_{R(F)\backslash R({\A})}
\overline{\varphi^{(r)}}\left(u_1\begin{pmatrix}1&&\\&g&\\&&1\end{pmatrix}\right)\psi'_{U}(u)\psi_{0,V_{2k-r},\delta}(v_0)
\psi_R(u'_{2k-r,n}(Y_0,0))\\ 
\theta_{a_1}^{(r)}(uu'_{2k-r,n}(Y_0,0)\iota_{4,1}(v_0,l_{2n-2}(u_1)g))\,d(...)
\end{multline}
is zero for all choices of data. Here $R\cong \Mat_{(2k-r-1)/2\times 1}$  is the group of all matrices of the form 
$$u'_{2k-r,n}(Y_0,0)=\alpha_1e_{(2k-r+3)/2,2n}+\alpha_2e_{(2k-r+5)/2,2n}+\cdots +\alpha_{2k-r}e_{2k-r,2n},$$
and $\psi_R(u'_{2k-r,n}(Y_0,0))=\psi(\alpha_{2k-r})$. (Notice that in \eqref{end9}, the character 
$\psi'_{0,V_{2k-r},\delta}$ of \eqref{end8} is replaced by $\psi_{0,V_{2k-r},\delta}$.) 
The domains of integration of the variables $v_0$ and $g$ are the same as in \eqref{end8}. 

The group generated by $R_2\backslash U$ and $R$ is isomorphic to $U_{0,1}$ (defined before integral \eqref{end1} above). Also, under this identification the product of the 
characters $\psi'_{U}$ and $\psi_R$ is equal to the character $\psi_{U_{0,1}}$. Thus integral \eqref{end9} is equal to integral ${\mathcal L}(1)$. 
This completes the proof that ${\mathcal L}(0)$ is zero for all choices of data if and only if ${\mathcal L}(1)$ is zero for all choices of data.

Next we prove that for $1\le j<\min(n,k-(r+1)/2)$, ${\mathcal L}(j)$ is zero for all choices of data if and only if ${\mathcal L}(j+1)$ is zero for all choices of data.
Many steps are similar to the proof for $j=0$ above. Starting with ${\mathcal L}(j)$, we again define the group $M$ and the groups $Q_i$, and  
perform a root exchange between them.  We then
define a Weyl element $w_0(j)\in Sp_{a_j}(F)$. Its definition is slightly different for $j>0$, so we give it here. We let
\begin{equation*}
w_0(j)=\begin{pmatrix} w_1&&w_2\\ &I_{2(n-j)}&\\  w_3&&w_4\end{pmatrix},\qquad
w_1,w_2,w_3,w_4\in \Mat_{r_1(2k-r-2j+2)+1}.
\end{equation*}
The matrix $w_0(j)$ is symplectic, so it suffices to specify $\begin{pmatrix} w_1& w_2\end{pmatrix}$.
The first $r$ rows of this matrix have entry $1$ at positions $(i,(i-1)(2k-r-2j+2)+1)$ for $1\le i\le r_1+1$, and at positions $(r_1+i, (r_1+i-1)(2k-r-2j+2)+1)$ for $2\le i\le r_1$.
For the next $r_1(2k-r-2j+2)+1-r$ rows, all entries in $w_2$ are zero, while for $w_1$, these rows form the matrix $w_{1,1}^0$ in \eqref{w-one-one-zero} with $\beta=2k-r-2j$. 

Introducing $w_0(j)$ and conjugating, we conclude that ${\mathcal L}(j)$ is zero for all choices of data if and only if the integral
\begin{multline*}
\int 
\overline{\varphi^{(r)}}\left(u\begin{pmatrix}I_j&&\\&g&\\&&I_j\end{pmatrix}\right)\widetilde{\psi}(A)\psi_{U}(u_0)\psi'_{0,V_{2k-r},\delta}(v_0)\psi_{U_{n,j}}(u)\psi'(D)\\
\theta_{a_j}^{(r)}\Bigg(\begin{pmatrix} A&B&C\\ &I_{a_{j+1}}&B^*\\ &&A^*\end{pmatrix}p_j(B(v_0))\begin{pmatrix} I_r&&\\ D&I_{a_{j+1}}&\\ E&D^*&I_r\end{pmatrix}
\begin{pmatrix}I_r&&\\& u_0\iota_{4,j+1}(v_0,1)&\\&&I_r\end{pmatrix}\\
w_0(j)\iota_{4,j}(1,l_{2n-2j}(u)g)\Bigg) \,
d(...)
\end{multline*}
is zero for all choices of data. 
The variables $A, B, C, D$ and $E$, and their characters are integrated over the same regions as in \eqref{end3}. 
The variable $v_0$ is integrated over the quotient  $V_{0,2k-r-2j}(F)\backslash V_{0,2k-r-2j}({\A})$; its character 
$\psi'_{0,V_{2k-r-2j},\delta}$ is defined similarly to \eqref{psi-prime}.
The variable $u_0$ is integrated over $U(F)\backslash U({\A})$, where $U$ is the subgroup of $U_{2k-r-2j,r_1,n-j}\subset Sp_{a_{j+1}}$ consisting of
$u_0$ with factorization \eqref{mat3} such that $Y\in \Mat_{(2k-r-2j)\times (2n-2j)}$ has bottom $(2k-r-2j-1)/2$ rows all zero. 
Notice that $u_0\in Sp_{a_{j+1}}$. The character $\psi_U$ is given by the same formula as \eqref{psi-sub-U}, using the factorization \eqref{mat3} of $u_0$
with $a=2k-r-2j$, $b=r_1$, $c=n-j$.
Finally, $u$ and $g$ are integrated as in \eqref{end1}. 

We again proceed as we did in analyzing \eqref{wh8} above (also see following \eqref{mat100}). We carry out a root exchange between root groups appearing in the 
$D, E$ variables and suitable upper triangular matrices. 
Then let $S_j$ be the subgroup of $Sp_{a_j}$ of matrices of the form
$$I_{a_0}+\sum_{i=1}^{2n-2j}\alpha_ie'_{r_1+1,a_0/2-n-j+i}+\beta e_{r_1+1,a_0-r_1}.$$
The center of $S_j$, $Z(S_j)$, is the group generated by the matrices above such that $\alpha_i=0$ for all $i$. Then $Z(S_j)$ is a subgroup of $C_1(r)$, the same group as defined 
above in treating ${\mathcal L}(0)$. 
Hence we may expand the integral along $S_j(F)Z(S_j)({\A})\backslash S_j({\A})$. The group $Sp_{2n-2j}(F)$ acts on this expansion with two orbits. 
First we consider the contribution from the constant term. The group $S_j$ is isomorphic to the Heisenberg group ${\mathcal H}_{2n-2j+1}$. 
Also, we have $w_0(j)(1,l_{2n-2j}(u))_{a_j}w_0(j)^{-1}\in S_j$. Changing variables in $S_j$, we obtain the constant term of $\overline{\varphi^{(r)}}$ along the unipotent radical of the maximal 
parabolic subgroup of $Sp_{2n}$ whose Levi part is $GL_j\times Sp_{2n-2j}$. From the cuspidality of $\varphi^{(r)}$, it follows that the contribution from the constant term along $S_j$ is zero. 

We are left with the contribution of the non-trivial orbit. Arguing as in the case $j=0$ (beginning with \eqref{end5}) 
we obtain the integral ${\mathcal L}(j+1)$. 
This concludes the proof of Lemma~\ref{lem4}.
\end{proof}

We conclude from Lemma~\ref{lem4}  that for $k>(r+1)/2$, the Whittaker coefficient \eqref{wh1} is zero for all choices of data if and only if the integral 
${\mathcal L}(\min(n,k-(r+1)/2))$ is zero for all choices of data.  We analyze the two possibilities for this minimum separately.

\begin{proposition}\label{prop1}
Suppose that $k\ge n+(r+3)/2$. Then the Whittaker coefficient \eqref{wh1} is zero for all choices of data.
\end{proposition}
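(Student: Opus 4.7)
The plan is to iterate Lemmas~\ref{lem1} and~\ref{lem4} to reduce the statement to the vanishing of $\mathcal{L}(n)$, and then to exhibit $\mathcal{L}(n)$ as a Fourier coefficient of $\theta^{(r)}_{a_n}$ attached to a unipotent orbit that is not dominated by $\mathcal{O}_c(\Theta^{(r)}_{a_n})$. The vanishing will then follow from Theorem~\ref{theta05}~part~(\ref{theta05-1}).

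First I note that $k\ge n+(r+3)/2$ forces $k-(r+1)/2\ge n+1$, so $\min(n,k-(r+1)/2)=n$. By Lemma~\ref{lem1} together with the discussion preceding Lemma~\ref{lem4}, $\mathcal{W}_{k,\delta}(f)$ vanishes for all data iff $L((r-3)/2)=\mathcal{L}(0)$ does, and iterating Lemma~\ref{lem4} from $j=0$ to $j=n-1$ reduces this to proving $\mathcal{L}(n)=0$ for every choice of data.

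Next I would examine $\mathcal{L}(n)$. Since $Sp_{2n-2n}$ is trivial, the only residual $\varphi^{(r)}$-integration is the Whittaker integral over $U_{n,n}(F)\backslash U_{n,n}(\A)$ against $\psi_{U_{n,n}}$; the theta factor, together with the characters $\psi_{U_{0,n}}$ and $\psi_{V_{0,M},\delta}$ where $M=2k-r-2n+2\ge 5$, represents a Fourier coefficient of $\theta^{(r)}_{a_n}$ on a specific unipotent subgroup of $Sp_{a_n}$. Applying an analogue of the Weyl element $w_0(j)$ (with $j=n$) and the same root-exchange sequence used in the proof of Lemma~\ref{lem4} up to (but not including) the cuspidality step, I would rewrite this as the Fourier coefficient attached to a symplectic orbit $\mathcal{O}$ of $Sp_{a_n}$ whose partition consists of $r-1$ parts of length $M$ (arising from the $r-1$ copies of $v_0$ with Whittaker character placed along the diagonal by $\iota_{4,n}$) together with a trailing part of size $2$ filling out $a_n=M(r-1)+2$.

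Finally, writing $a_n=\alpha r+\beta$ with $0\le\beta<r$ and recalling
\[\mathcal{O}_c(\Theta^{(r)}_{a_n})=\begin{cases}(r^\alpha\,\beta) & \alpha\text{ even},\\ (r^{\alpha-1}\,(r-1)\,(\beta+1)) & \alpha\text{ odd},\end{cases}\]
the hypothesis $M\ge 5$ is exactly what is needed to guarantee $\mathcal{O}\not\le\mathcal{O}_c(\Theta^{(r)}_{a_n})$ in the dominance order, which one verifies by comparing partial sums case by case on the parities of $\alpha$ and of $M$. Theorem~\ref{theta05}~part~(\ref{theta05-1}) then forces this Fourier coefficient, and hence $\mathcal{L}(n)$, to be zero. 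The main obstacle is the combinatorial verification in this last step: the precise orbit $\mathcal{O}$ must be tracked through every root exchange, and in borderline cases (when $M$ is close to $r$, so that $\mathcal{O}$ initially coincides with $\mathcal{O}_c$) an additional Fourier expansion—analogous to the $S_j$-expansion used in Lemma~\ref{lem4}—may be required to enlarge $\mathcal{O}$ sufficiently for the dominance comparison to succeed.
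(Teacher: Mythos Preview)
Your reduction to $\mathcal{L}(n)$ via Lemmas~\ref{lem1} and~\ref{lem4} is correct. The gap is in the second half: the orbit $(M^{r-1}\,2)$ that you extract is \emph{not} in general large enough to invoke Theorem~\ref{theta05}\,(\ref{theta05-1}). Concretely, take $r=7$ and $M=5$ (so $k=n+5$); then $a_n=32$, $\mathcal{O}_c(\Theta^{(7)}_{32})=(7^4\,4)$, and one checks directly that $(5^6\,2)\le(7^4\,4)$ in the dominance order. More generally $(M^{r-1}\,2)\le\mathcal{O}_c(\Theta^{(r)}_{a_n})$ whenever $M\le r$, i.e.\ for the entire range $n+(r+3)/2\le k\le n+r-1$, so Theorem~\ref{theta05} gives no information there. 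Your ``borderline'' caveat is thus not a borderline at all but the generic case, and the additional $S_j$-type expansion you allude to is not a patch but the whole argument.

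The paper's proof is both different and much shorter: it performs \emph{no} root exchanges on $\mathcal{L}(n)$. Instead it observes that the only coupling between $\overline{\varphi^{(r)}}$ and $\theta^{(r)}_{a_n}$ is through $l_0(u)=u_{n,n+1}$, which sits inside $\theta^{(r)}_{a_n}$ via the one-parameter subgroup $S_n=\{I_{a_n}+\alpha e_{a_n/2,a_n/2+1}\}$. Fourier-expanding $\theta^{(r)}_{a_n}$ along $S_n$ splits $\mathcal{L}(n)$ into two pieces. For each nontrivial character the resulting Fourier coefficient of $\theta^{(r)}_{a_n}$ is attached to the orbit $((r+1)\,1^{a_n-r-1})$, whose leading part $r+1$ already exceeds $r$, so it is never $\le\mathcal{O}_c$ and Theorem~\ref{theta05}\,(\ref{theta05-1}) kills it immediately. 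For the trivial character, the change of variables $u_0[\cdot]\mapsto u_0[\cdot]-u_{n,n+1}$ frees $u_{n,n+1}$ from the theta factor, and the $u$-integral of $\overline{\varphi^{(r)}}$ becomes the constant term along the unipotent radical of the Siegel parabolic of $Sp_{2n}$, which vanishes by cuspidality. Note in particular that cuspidality of $\varphi^{(r)}$ is essential here; your proposed argument never invokes it, which is another sign that the orbit $(M^{r-1}\,2)$ cannot by itself force $\mathcal{L}(n)=0$.
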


\begin{proof}
The integral ${\mathcal L}(\min(n,k-(r+1)/2))={\mathcal L}(n)$ is equal to
\begin{equation*}
\int\limits_{U_{n,n}(F)\backslash U_{n,n}({\A})}\int
\overline{\varphi^{(r)}(u)}
\theta_{a_n}^{(r)}(u_0\iota_{4,n}(v_0,l_0(u)))
\psi_{U_{0,n}}(u_0)\psi'_{0,V_{2k-r-2n+2},\delta}(v_0)\psi_{U_{n,n}}(u) \,
d(...).
\end{equation*}
Here the variables $u_0$ and $v_0$ are integrated as in \eqref{end1} with $j=n$. By definition (see \eqref{the-hom-ell}), if $u=(u_{i,j})\in U_{n,n}$, then $l_0(u)=u_{n,n+1}\in \mathcal{H}_1$; 
in the above expression we embed 
this degenerate Heisenberg group in the group $Sp_2$ as the matrix $\left(\begin{smallmatrix}1&u_{n,n+1}\\&1\end{smallmatrix}\right)$.
Thus we have $\iota_{4,n}(1,l_0(u))=I_{a_n}+u_{n,n+1}e_{a_n/2,a_n/2+1}$.  Expand the above integral 
along the one parameter subgroup $S_n$ of $Sp_{a_n}$  defined by $S_n=\{I_{a_n}+\alpha e_{a_n/2,a_n/2+1}\}$. Then the contribution from each nontrivial orbit is zero. Indeed, each such 
contribution may be expressed as an integral of a Fourier coefficient of $\theta_{a_n}^{(r)}$ which corresponds to the unipotent orbit $((r+1)1^{a_n-r-1})$. By Theorem~\ref{theta05}, part~\ref{theta05-1},
this Fourier coefficient is zero. As for the constant term along $S_n$, after a suitable change of variables, we obtain as inner integration the constant term of $\overline{\varphi^{(r)}}$ 
along the unipotent radical of the parabolic subgroup of $Sp_{2n}$ whose Levi part is $GL_n$. By the cuspidality of $\varphi^{(r)}$, this constant term is zero. Thus ${\mathcal L}(n)$ is 
zero for all choices of data.  
\end{proof}

Next we examine the case $n\ge k-(r+1)/2$. From our work above, the Whittaker coefficient \eqref{wh1} is zero for all choices of data if and only if
the  integral ${\mathcal L}(k-(r+1)/2)$ is zero for all choices of data. 
In this case, $2n-2j=2n-2k+r+1$,  $2k-r-2j+2=3$, and $a_{k-(r+1)/2}=2(n-k+2r)$. Thus
\begin{multline}\label{arc2}
{\mathcal L}(k-(r+1)/2)=\int
\overline{\varphi^{(r)}}\left(u\begin{pmatrix}I_{k-(r+1)/2}&&\\&g&\\&&I_{k-(r+1)/2}\end{pmatrix}\right)\psi_{U_{n,k-(r+1)/2}}(u)\\ 
\theta_{2(n-k+2r)}^{(r)}(u_0\iota_{4,k-(r+1)/2}(v_0,l_{2n-2k+r+1}(u)g))
\psi_{U_{0,k-(r+1)/2},\delta}(u_0)\psi_{0,V_3}(v_0)\,du_0\,du\,dv_0\,dg,
\end{multline}
where 
$g$ is integrated over the quotient  $Sp_{2n-2k+r+1}(F)\backslash Sp_{2n-2k+r+1}({\A})$,
$u$ is integrated over $U_{n,k-(r+1)/2}(F)\backslash U_{n,k-(r+1)/2}({\A})$, 
$u_0$ is integrated over $U_{0,k-(r+1)/2}(F)\backslash U_{0,k-(r+1)/2}({\A})$, 
and $v_0$ is integrated over $V_{0,3}(F)\backslash V_{0,3}({\A})$ with
$$V_{0,3}=\left\{v_0=\begin{pmatrix} 1&x_1&x_2\\ &1&0\\ &&1\end{pmatrix}\right\}.$$
The characters in \eqref{arc2} are given by $\psi_{0,V_3}(v_0)=\psi(x_2)$ (see \eqref{wh1}) and 
$$\psi_{U_{0,k-(r+1)/2},\delta}(u)=\psi(\text{tr}(X_1+\cdots +X_{r_1}))\psi_{R,\delta}(Y)\psi_0(Z)\qquad $$
with $\psi_{R,\delta}(Y)=\psi(Y_{1,1}+\delta Y_{3,2n-2k+r+3})$ (compare \eqref{psi-111}).

To simplify the notation, let $a=2(n-k+2r)$. As in the previous cases, we start with a root exchange. For $1\le i\le r_1-1$, let $M_i$ denote the subgroup of $Sp_a$ consisting of all 
matrices $I_a+\alpha e'_{3i-2,3i-1}+\beta e'_{3i-2,3i}$. Let $M=V_{0,3}M_1\ldots M_{r_1-1}$. Let $\psi_{M}$ denote the character of $M$ 
defined by $\psi_{M}(m)=\psi_M(I_a+ \beta e'_{3r_1-2,3r_1})=\psi(\beta)$. For $1\le i\le r_1-1$, let $Q_i$ denote the subgroup of 
$U_{0,k-(r+1)/2}$ generated by all matrices 
$$u^i_{3(i-1),3,3,a-6(i+1)}(X_i),\qquad X_i=\begin{pmatrix} 0&\\ b&0_2\end{pmatrix},\quad b\in \Mat_{2\times 1}.$$
Then \eqref{arc2} is zero for all choices of data if and only if the integral
\begin{multline}\label{arc3}
\int
\overline{\varphi^{(r)}}\left(u\begin{pmatrix}I_{k-(r+1)/2}&&\\&g&\\&&I_{k-(r+1)/2}\end{pmatrix}\right)
\theta_{a}^{(r)}(u_0m\iota_{4,k-(r+1)/2}(1,l_{2n-2k+r+1}(u)g))\\
\psi_{U_{0,k-(r+1)/2}}(u_0)\psi_{M,\delta}(m)\psi_{U_{n,k-(r+1)/2}}(u) \,du_0\,du\,dm\,dg 
\end{multline}
is zero for all choices of data, 
where $M$ is integrated over $M(F)\backslash M({\A})$ and $u_0$ is integrated over 
$Q_1({\A})\ldots Q_{r_1-1}({\A})U_{n,k-(r+1)/2}(F)\backslash U_{n,k-(r+1)/2}({\A})$. 

Let $x(1)=I_a+e'_{3r_1,3r_1+1}$.  
Let $w_0$ denote the Weyl element of $Sp_a$ 
$$w_0=\text{diag}(w,I_{2n-2k+r+1},w^*)$$ 
where $w$ is the Weyl element of $GL_{3r_1+1}$ specified as follows. The first $r_1+1$ rows of $w$ have the entry $1$ at position $(i,2i+1)$ and $0$ elsewhere. 
The next $2r_1$ rows are given by the matrix
$$\begin{pmatrix} 0&I_2&0&0&\cdots&0\\ 0&0&0&I_2&\cdots&0\\ \vdots&\vdots &
\vdots&\vdots&\cdots&\vdots\\ 0&0&0&0&\cdots&I_2\end{pmatrix}.$$ 

We have $\theta_a^{(r)}(h)=\theta_a^{(r)}(w_0x(1)h)$. Conjugating $w_0x(1)$ to the right, integral \eqref{arc3} is equal to
\begin{multline}\label{arc4}
\int 
\overline{\varphi^{(r)}}\left(u\begin{pmatrix}I_{k-(r+1)/2}&&\\&g&\\&&I_{k-(r+1)/2}\end{pmatrix}\right)
\widetilde{\psi}(A)\psi_{U}(u_0)\psi_{U_{n,k-(r+1)/2}}(u)\psi_\delta(B)\\
\theta_{a}^{(r)}\left (\begin{pmatrix} A&B&C\\ &I_{a'}&B^*\\ &&A^*\end{pmatrix}\begin{pmatrix} I_{r_1+1}&&\\ D&I_{a'}&\\ &D^*&I_{r_1+1}\end{pmatrix}
\begin{pmatrix}I_{r_1+1}&&\\&u_0&\\&&I_{r_1+1}\end{pmatrix}
w_0x(1)\iota_{4,k-(r+1)/2}(1,l_{2n-2k+r+1}(u)g)\right ) \\ \,d(...).
\end{multline}
Here $a'=2n-2k+3r-1$ and $A$ is integrated over $L_{r_1+1}(F)\backslash L_{r_1+1}({\A})$. The character $\widetilde{\psi}(A)$ is the Whittaker character. 
The matrix $B$ is integrated over $B(r_1+1,a')(F)\backslash B(r_1+1,a')({\A})$, where $B(r_1+1,a')$ is the group of matrices 
$(B_{\alpha,\beta}) \in \Mat_{(r_1+1)\times a'}$ such that $B_{\alpha,2\alpha-2}=0$ for all $1\le \alpha\le r_1$, and $B_{r_1+1,\beta}=0$ for all $1\le \beta\le 2(n-k+r)$. 
The character $\psi_\delta(B)$ is given by  
$$\psi_\delta(B)=\psi(B_{r_1+1,2(n-k+r)+1}+\delta B_{r_1+1,2(n-k+r)+2}).$$ 
The variable $C$ is integrated over $C(r_1+1)(F)\backslash C(r_1+1)({\A})$, 
where $C(r_1+1)$ consists of all $C\in \Mat^0_{r_1+1}$ such that $C_{r_1+1,1}=0$. 
The variable $D$ is integrated over $D(a',r_1+1)(F)\backslash D(a',r_1+1)({\A})$ where $D(a',r_1+1)$ is the subgroup of $\Mat_{a'\times (r_1+1)}$ defined in a similar 
way to the group that appears following \eqref{wh8}. Finally, the variable $u_0$ is integrated over $U^0_{2,r_1,n-k+r_1+1}(F)\backslash U^0_{2,r_1,n-k+r_1+1}({\A})$, where the 
group $U^0_{2,r_1,n-k+r_1+1}\subset Sp_{a'}$ was defined after \eqref{mat4}. Its character $\psi_{U}(u_0)$ is defined similarly to equation \eqref{wh2}. 
The variables $u$ and $g$ are integrated as before. 

Carrying out a root exchange between the group $D(a',r_1+1)$ and a suitable group of upper triangular matrices involving roots in the first $r_1$ rows, 
we deduce that  \eqref{arc4} is zero for all choices of data if and only if the integral 
\begin{multline}\label{arc5}
\int 
\overline{\varphi^{(r)}}\left(u\begin{pmatrix}I_{k-(r+1)/2}&&\\&g&\\&&I_{k-(r+1)/2}\end{pmatrix}\right)
\widetilde{\psi}(A)\psi_{U}(u_0)\psi_{U_{n,k-(r+1)/2}}(u)\psi_\delta(B)\\
\theta_{a}^{(r)}\left (\begin{pmatrix} A&B&C\\ &I_{a'}&B^*\\ &&A^*\end{pmatrix}
\begin{pmatrix}I_{r_1+1}&&\\&u_0&\\&&I_{r_1+1}\end{pmatrix}
w_0x(1)\iota_{4,k-(r+1)/2}(1,l_{2n-2k+r+1}(u)g)\right ) \,d(...)
\end{multline}
is zero for all choices of data. Here $C$ is integrated over $\Mat^0_{r_1+1}(F)\backslash \Mat^0_{r_1+1}({\A})$ and $B$ is integrated over 
$B_1(r_1+1,a')(F)\backslash B_1(r_1+1,a')({\A})$, where $B_1(r_1+1,a')$ is the subgroup of $\Mat_{(r_1+1)\times a'}$ of all matrices $B$ such that 
$B_{r_1+1,\beta}=0$ for all $1\le \beta\le 2(n-k+r)$. All other variables are integrated as before.

Let $y(\delta)=I_a+\sum_{i=1}^{r_1}\delta e'_{2i-1,2i}$. Let $w_1$ denote the Weyl element of $Sp_a$ given by
$$w_1=\begin{pmatrix} I_{r_1+1}&&&&\\ &w_{1,1}&&w_{1,2}&\\ &&I_{2n-2k+r+1}&&\\
&w_{2,1}&&w_{2,2}&\\  &&&&I_{r_1+1}\end{pmatrix}.$$
Here, $w_{\alpha,\beta}\in\Mat_{r-1}$ for all $\alpha,\beta\in\{1,2\}$. To define $w_1$ we need only specify the matrices $w_{1,1}$ and $w_{1,2}$. We do so as follows.
The matrix $w_{1,1}$ has the entry $1$ at position $(r_1+i,2i)$ for $1\le i\le r_1$ and all other entries zero.
The matrix $w_{1,2}$ has the 
entry $1$ at position $(i,2i+1)$ for $1\le i\le r_1$ and all other entries zero. 

Introducing $w_1y(\delta)$ and conjugating, integral \eqref{arc5} is equal to 
\begin{multline}\label{arc6}
\int 
\overline{\varphi^{(r)}}\left(u\begin{pmatrix}I_{k-(r+1)/2}&&\\&g&\\&&I_{k-(r+1)/2}\end{pmatrix}\right)
\widetilde{\psi}(A)\psi_{U,\delta}(u_0)\psi_{U_{n,k-(r+1)/2}}(u)\psi'(D)\\
\theta_{a}^{(r)}\left (\begin{pmatrix} A&B&C\\ &I_{a''}&B^*\\ &&A^*\end{pmatrix}\begin{pmatrix} I_r&&\\ D&I_{a''}&\\ E&D^*&I_r\end{pmatrix}
\begin{pmatrix}I_{r}&&\\&u_0&\\&&I_{r}\end{pmatrix}
w_1y(\delta)w_0x(1)\iota_{4,k-(r+1)/2}(1,l_{2n-2k+r+1}(u)g)\right ) \\
\, d(...)
\end{multline}
Here, $a''=a-2r=2(n-k+r)$. The variable $A$ is now integrated over $L_r(F)\backslash L_r({\A})$, and $\widetilde{\psi}(A)$ is the Whittaker character of $L_r$. 
The variable $B$ is integrated over a subgroup of $\Mat_{r\times a''}$. Similarly, for the variables $C, D$ and $E$. We omit the precise descriptions. The variable $u_0$ is integrated 
over $U^0_{1,r_1,n-k+r_1+1}(F)\backslash  U^0_{1,r_1,n-k+r_1+1}({\A})$. 
For $u_0=(u_0[\alpha,\beta])\in U^0_{1,r_1,n-k+r_1+1}(\A)\subset Sp_{a''}(\A)$, the character $\psi_{U,\delta}$ is given for $k\leq n+(r+1)/2$ by
\begin{equation}\label{psi-u-delta}
\psi_{U,\delta}(u_0)=
\psi(u_0[1,2]+\cdots + u_0[r_1-1,r_1]+\delta
{u_0}[r_1,2n-2k+3(r+1)/2]). 
\end{equation}

We now carry out a root exchange between the root subgroups appearing in $D, E$ and 
suitable upper triangular matrices. We deduce that integral \eqref{arc6} is zero for all choices of data if and only if the integral
\begin{multline}\label{arc7}
\int 
\overline{\varphi^{(r)}}\left(u\begin{pmatrix}I_{k-(r+1)/2}&&\\&g&\\&&I_{k-(r+1)/2}\end{pmatrix}\right)
\widetilde{\psi}(A)\psi_{U,\delta}(u_0)\psi_{U_{n,k-(r+1)/2}}(u)\\
\theta_{a}^{(r)}\left (\begin{pmatrix} A&B&C\\ &I_{a''}&B^*\\ &&A^*\end{pmatrix}
\begin{pmatrix}I_{r}&&\\&u_0&\\&&I_{r}\end{pmatrix}
w_1y(\delta)w_0x(1)\iota_{4,k-(r+1)/2}(1,l_{2n-2k+r+1}(u)g)\right )\, d(...)
\end{multline}
is zero for all choices of data. In \eqref{arc7}, 
$B$ is integrated over $\Mat_{r\times a''}(F)\backslash \Mat_{r\times a''}({\A})$, and $C$ is integrated over $\Mat^0_r(F)\backslash \Mat^0_r({\A})$. 
Observe that the integration over the $B$ and $C$ variables give rise to the constant term along the unipotent radical of the parabolic subgroup of $Sp_a$ 
whose Levi part is $GL_r\times Sp_{a''}$. 

Next, we conjugate $w_1y(\delta)w_0x(1)$ across $\iota_{4,k-(r+1)/2}(1,l_{2n-2k+r+1}(u)g)$.
Note that the matrix $\iota_{4,k-(r+1)/2}(1,g)$ commutes with $w_1y(\delta)w_0x(1)$. Also, when conjugating the element $\iota_{4,k-(r+1)/2}((1,l_{2n-2k+r+1}(u))$ by 
$w_1y(\delta)w_0x(1)$, we obtain $u'\iota_{4,k-(r+1)/2}((1,l_{2n-2k+r+1}(u))$ where $u'$ is an element in the unipotent radical that we are integrating over in \eqref{arc7}. 
We conclude that the integral \eqref{arc7} is zero for all choices of data if and only if the integral
\begin{multline}\label{arc8}
\int 
\overline{\varphi^{(r)}}\left(u\begin{pmatrix}I_{k-(r+1)/2}&&\\&g&\\&&I_{k-(r+1)/2}\end{pmatrix}\right) \\
\theta_{2(n-k+r)}^{(r)} \left(u_0\begin{pmatrix}I_{r_1-1}&&\\&\iota_{4,k-r_1}(1,l_{2n-2k+r+1}(u)g)&\\&&I_{r_1-1}\end{pmatrix}\right)
\psi_{U,\delta}(u_0)\psi_{U_{n,k-(r+1)/2}}(u)\,du_0\,du\,dg
\end{multline}
is zero for all choices of data.
Here, all variables are integrated as in \eqref{arc7}.
In particular, the domain of integration of the variable $u_0$ is the quotient $U^0_{1,r_1,n-k+r_1+1}(F)\backslash  U^0_{1,r_1,n-k+r_1+1}({\A})$. 
The Fourier coefficient of $\theta_{2(n-k+r)}^{(r)}$ given by this integration is attached to the unipotent orbit $((r-1)1^{2n-2k+r+1})$. 
Note that this is the same Fourier coefficient as in integral \eqref{whca115}. (There we denoted the integration variable by $v$, but the integration domain and the character are the same.)

We now obtain consequences from the expression \eqref{arc8}.  Suppose first
 that $2n-2k+r+1=0$; that is $k=n+(r+1)/2$. In this case there is no $g$ integration and $l_{2n-2k+r+1}(u)=l_0(u)=u_{n,n+1}$. 
 The character $\psi_{U_{n,n}}(u)$, given by \eqref{character-U-nm} with $m=n$, is independent of $u_{n,n+1}$, but after making the variable change $u_0[n,n+1]\mapsto u_0[n,n+1]-u_{n,n+1}$,
 we obtain the generic (Whittaker) character attached to $Sp_{2n}$ given by 
\begin{equation*}
\psi_{Wh,-\delta}(u)=\psi\left(u_{1,2}+\dots+u_{n-1,n}-\delta u_{n,n+1}\right).
\end{equation*}
 Let $W_{\overline{\varphi^{(r)}},-\delta}(g)$ denote the value of the Whittaker coefficient of $\overline{\varphi^{(r)}}$ with respect to $\psi_{Wh,-\delta}$. 
Thus integral \eqref{arc8} is thus equal to
\begin{equation*}
W_{\overline{\varphi^{(r)}},-\delta}(e)
\int\limits_{U^0_{1,r_1,n-k+r_1+1}(F)\backslash  U^0_{1,r_1,n-k+r_1+1}({\A})} 
\theta_{2(n-k+r)}^{(r)} (u_0)
\psi_{U,\delta}(u_0)\,du_0.
\end{equation*}

The complex conjugate of $\psi_{Wh,-\delta}$ is in the same class as $\psi_{Wh,\delta}$ modulo the conjugation action of the rational torus.
Also, since $k=n+(r+1)/2$, the character $\psi_{U,\delta}$ is a Whittaker character of $Sp_{2(n-k+r)}$.  We arrive at the following statement.

\begin{proposition}\label{prop7} Suppose that $k=n+(r+1)/2$ and $r>1$. 
Fix a nontrivial additive character $\psi$ of $F\backslash \A$.
Then the Whittaker coefficient ${\mathcal W}_{k,\delta}(f)$ is not zero for some choice of data if and only if 
\begin{enumerate} \item $\Theta^{(r)}_{r-1}$ has a nonzero Whittaker coefficient
with respect to the Whittaker character $\psi_{U,\delta}$ of $Sp_{r-1}$ and 
\item $\pi^{(r)}$ is generic with respect to the Whittaker character $\psi_{Wh,\delta}$ of $Sp_{2n}$.
\end{enumerate}
\end{proposition}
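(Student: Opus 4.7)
The plan is to read off the equivalence directly from the factored form displayed immediately before the statement of the proposition, together with a short bookkeeping step on Whittaker orbits. Since almost all of the work is already done in the computation leading to the factorization, the proof itself is short; the content lies in matching the two factors with conditions~(1) and~(2).

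First I would analyze the second factor, namely
$$\int_{U^0_{1,r_1,0}(F)\backslash U^0_{1,r_1,0}({\A})}\theta_{r-1}^{(r)}(u_0)\,\psi_{U,\delta}(u_0)\,du_0.$$
Since $k=n+(r+1)/2$, one has $n-k+r_1+1=0$, so $U^0_{1,r_1,0}$ embeds inside $Sp_{r-1}$, and a direct inspection of the factorization \eqref{mat4} shows that in this degenerate range it coincides with the standard maximal unipotent subgroup of $Sp_{r-1}$. Substituting $2n-2k+3(r+1)/2=r_1+1$ into the definition \eqref{psi-u-delta}, the character $\psi_{U,\delta}$ becomes
$$u_0\mapsto \psi\bigl(u_0[1,2]+\cdots+u_0[r_1-1,r_1]+\delta\,u_0[r_1,r_1+1]\bigr),$$
which is exactly the Whittaker character $e_\delta$ of $Sp_{r-1}$ appearing in Conjecture~\ref{conj1}. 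Hence the second factor is nonzero for some $\theta$ in the space of $\Theta_{r-1}^{(r)}$ if and only if condition~(1) holds.

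Next I would identify the first factor $W_{\overline{\varphi^{(r)}},-\delta}(e)$, which by construction is the value at the identity of the Whittaker coefficient of $\overline{\varphi^{(r)}}$ with respect to $\psi_{Wh,-\delta}$. Varying $\varphi^{(r)}$, this is nonzero for some vector if and only if $\overline{\pi^{(r)}}$ is generic with respect to $\psi_{Wh,-\delta}$, which by complex conjugation is equivalent to $\pi^{(r)}$ being generic with respect to $\overline{\psi_{Wh,-\delta}}=\psi_{Wh,-\delta}^{-1}$. A short torus computation then shows that this last character lies in the same orbit as $\psi_{Wh,\delta}$ under conjugation by the rational split torus of $Sp_{2n}$: with $t_i=(-1)^{n-i}$ one has $t_it_{i+1}^{-1}=-1$ for $i<n$ and $t_n^2=1$, which absorbs the sign on the first $n-1$ simple coordinates while leaving the last coordinate unchanged. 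Thus the first factor is nonzero for some choice of data if and only if condition~(2) holds.

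Finally, the product formula presents ${\mathcal W}_{k,\delta}(f)$ as a product of two factors depending on independent pieces of data (the cusp form $\varphi^{(r)}$ on the one hand, the theta vector on the other). Hence the product is nonzero for some choice of data if and only if each factor is separately nonzero for some choice of data, and the equivalence of the proposition follows. The only genuine subtlety in the argument is the orbit matching used in the second paragraph above; the rest is immediate from the factorization that was already established.
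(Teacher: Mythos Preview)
Your proposal is correct and follows essentially the same approach as the paper. The paper derives the factored form immediately before stating the proposition and then records only two brief observations (that $\psi_{U,\delta}$ is a Whittaker character of $Sp_{r-1}$, and that the complex conjugate of $\psi_{Wh,-\delta}$ lies in the torus orbit of $\psi_{Wh,\delta}$); you have simply written out those observations in full, including the explicit torus element $t_i=(-1)^{n-i}$ realizing the orbit matching, which the paper omits.
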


Next, suppose that $2n-2k+r+1>0$. Since we are in the case $k>(r+1)/2$, this implies that $(r+3)/2\le k\le n+(r-1)/2$. In this case, as noted above, the integration over the 
quotient $U^0_{1,r_1,n-k+r_1+1}(F)\backslash  U^0_{1,r_1,n-k+r_1+1}({\A})$ is attached to the unipotent orbit $((r-1)1^{2n-2k+r+1})$. 
We may use the result of \cite{I1} and  Conjecture~\ref{conj20} and then argue as in the treatment of equations \eqref{whca116} and \eqref{whca117}.
We obtain the following result (we suppress the details, as they are similar to the treatment there).
\begin{proposition}\label{prop8} 
Suppose that $(r+3)/2\le k\le n+(r-1)/2$ and that the Descent Conjecture (Conjecture~\ref{conj20}) holds. Then the Whittaker coefficient \eqref{wh1} is not zero for some choice of data if and only if 
the integral \eqref{intro5} is not zero for some choice of data.
\end{proposition}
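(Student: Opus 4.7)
The plan is to start from integral \eqref{arc8}, which the preceding analysis in this section has already shown is equivalent (in terms of vanishing for all choices of data) to the Whittaker coefficient ${\mathcal W}_{k,\delta}(f)$. The whole remaining task is to reinterpret the inner integration over $U^0_{1,r_1,n-k+r_1+1}(F)\backslash U^0_{1,r_1,n-k+r_1+1}({\A})$, which is a Fourier coefficient of $\theta^{(r)}_{2(n-k+r)}$ attached to the unipotent orbit $((r-1)1^{2n-2k+r+1})$, and this is precisely the same Fourier coefficient that appears in \eqref{whca115}. The argument will therefore run parallel to the proof of Proposition~\ref{prop0}, the only structural difference being that the outer variable $h$ now ranges over $Sp_{2n-2k+r+1}$ (coming from the surviving copy of $g$ together with $l_{2n-2k+r+1}(u)$) rather than over $Sp_{2n}$.

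First I would recognize the group $U^0_{1,r_1,n-k+r_1+1}$ as a unipotent radical on which the Fourier coefficient is of Fourier--Jacobi type. Applying Ikeda's result \cite{I1}, as was done in \eqref{whca116}--\eqref{whca117}, one expresses the space of such coefficients as a dense subspace of expressions involving $\overline{\theta^{(2),\psi^\delta}_{2n+r-2k+1}}$ paired with a descent integral of $\theta^{(r)}_{2(n-k+r)}$ against a smaller unipotent group $U_{1,r_1,n+r_1-k}$ with its generic character. This is the step that invokes the Descent Conjecture (Conjecture~\ref{conj20}), which ensures that the remaining descent coefficient of $\theta^{(r)}_{2(n-k+r)}$ lies in the space of the theta representation $\Theta^{(2r),\psi^\delta}_{2n+r-2k+1}$, producing a function of the form $\theta^{(2r),\psi^\delta}_{2n+r-2k+1}(h)$.

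Next I would unfold the remaining $\theta^{(2),\psi^\delta}_{2n+r-2k+1}$ exactly as in the computation \eqref{whca119}: summing over the polarization, using the explicit action of the Weil representation, and evaluating the Schwartz function at the special point forced by the character $\psi_{U,\delta}$ of \eqref{psi-u-delta}, which absorbs the $\delta$-factor into the theta function. This converts the pair $\bigl(\overline{\theta^{(2)}}, \text{Weil factor}\bigr)$ into the factor $\theta^{(2),\psi^\delta}_{2n+r-2k+1}(l_{2n+r-2k+1}(u)h)$ of \eqref{intro5}. A change of variable setting $h = l_{2n-2k+r+1}(u)g$ (using that $l_{2n-2k+r+1}(u)$ sits inside the symplectic group acting on $g$) then identifies the remaining integrations with those of \eqref{intro5}: the $u$-integration over $U_{n,k-(r+1)/2}(F)\backslash U_{n,k-(r+1)/2}({\A})$ with its character $\psi_{U_{n,k-(r+1)/2}}$ survives unchanged, while the joint $(g,u_0)$ integration becomes an integration of $h$ over $Sp_{2n-2k+r+1}(F)\backslash Sp_{2n-2k+r+1}({\A})$.

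The main obstacle is purely bookkeeping: tracking the unipotent subgroups, verifying that the characters match $\psi_{U_{n,k-(r+1)/2}}$ after the substitutions, and checking that the factor produced by the Descent Conjecture is correctly identified with a function in $\Theta^{(2r),\psi^\delta}_{2n+r-2k+1}$ on $Sp^{(2r)}_{2n+r-2k+1}(\A)$ rather than on a larger cover. Because all the substantive inputs (Ikeda's theorem, the Descent Conjecture, the explicit Weil action, and the smallness of $\Theta^{(r)}_{2(n-k+r)}$ from Theorem~\ref{theta05}(\ref{theta05-1}) used implicitly in deriving \eqref{arc8}) are already in place, no new analytic ingredient is required beyond those used in the proof of Proposition~\ref{prop0}.
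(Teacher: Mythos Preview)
Your proposal is correct and follows exactly the approach the paper indicates: start from \eqref{arc8}, recognize the $u_0$-integration as the Fourier--Jacobi coefficient attached to $((r-1)1^{2n-2k+r+1})$, and then apply Ikeda's result and Conjecture~\ref{conj20} precisely as in \eqref{whca116}--\eqref{whca117}. One minor simplification: the step you describe as analogous to \eqref{whca119} is not actually needed here, because in \eqref{arc8} there is no surviving $B$-type integration to collapse (compare the $k=(r+1)/2$ case at the end of the proof of Proposition~\ref{prop0}, where the paper notes ``in this case $B=0$''); the Ikeda decomposition already produces $\theta^{(2),\psi^\delta}_{2n+r-2k+1}$ evaluated at $l_{2n-2k+r+1}(u)g$, which is exactly the factor appearing in \eqref{intro5}.
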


To conclude this Section, we now return to the point we deferred above, and prove that the integral \eqref{end4} is zero for all choices of data.
 To do this we will define a family of integrals $K(j)$, where $0\le j\le \min(r_1-1,(2k-r-3)/2)$. 
 Recall that $k>(r+1)/2$. Hence, $2k-r-3\ge 0$, and the set of such $j$ is non-empty. We will prove that for $0\le j< \min(r_1-1,(2k-r-3)/2)$, the integral  $K(j)$ is zero for all choices of data if and 
 only if the integral $K(j+1)$ is zero for all choices of data. The idea is similar to the proof of Lemma~\ref{lem1}. 

First, we define $K(0)$ to be the integral \eqref{end4}. 
To define the integrals $K(j)$ for $j\ge 1$, we introduce unipotent groups $U^{2,j}_{2k-r,r_1,n}$. 
Let $b_j=2n+(2k-r)(r-1)-2jr $ (so $b_j=a_j-2r$).
Let $Q^j_{2k-r,r_1,n}$ denote the parabolic subgroup of $Sp_{b_j}$ whose Levi part is $GL_{2k-r-2j}^{r_1-j}\times GL_{2k-r-2j-1}^j\times Sp_{2n}$. 
Let $U^{2,j}_{2k-r,r_1,n}$ denote the unipotent radical of $Q^j_{2k-r,r_1,n}$.  The matrices in $U^{2,j}_{2k-r,r_1,n}$ have a factorization that is similar to \eqref{mat5}.
We define the subgroup $U''_{2,j}$ of $U^{2,j}_{2k-r,r_1,n}$ by imposing the same condition $Y_2=0$ used to specify the subgroup $U_j$ of $U^{1,j}_{2k,r_1,n}$.
Moreover, let $\psi_{U''_{2,j}}$ be the character of the 
quotient $U''_{2,j}(F)\backslash U''_{2,j}({\A})$ given as in \eqref{wh2} with respect to the factorization here.  

Recall that the group $V_{2k-2j-r-1}$ is the upper triangular maximal unipotent subgroup of $SO_{2k-2j-r-1}$. Embedding 
$V_{2k-2j-r-1}$ in $V_{2k}$ by $v\mapsto \diag(I_{r_1+1},v,I_{r_1+1})$, we define the character $\psi_{V_{2k-2j-r-1},\delta}$ to be the restriction of $\psi_{V_{2k},\delta}$ 
(see equation \eqref{whit-char}) to the embedded image of the group $V_{2k-2j-r-1}$. 
We consider the semidirect product of the groups $V_{2k-2j-r-1}$ and $\Mat_{1\times (2k-2j-r-1)}$ realized as the set of
matrices $v(a)=\left(\begin{smallmatrix} 1&a\\ &v\end{smallmatrix}\right)\in  L_{2k-2j-r}$ with 
$v\in V_{2k-2j-r-1}$, and $a\in \Mat_{1\times (2k-2j-r-1)}$. 
If $2k-2j-r\neq3$, define a character on $V_{2k-2j-r-1}(\A)\ltimes \Mat_{1\times (2k-2j-r-1)}(\A)$ by
$$\psi'_{V_{2k-2j-r-1},\delta}(v(a))=\psi(a_{1,1})
\psi_{V_{2k-2j-r-1},\delta}(v).$$
If $2k-2j-r=3$, then $V_{2k-2j-r-1}$ consists of only the identity matrix, and in that case we define $\psi'_{V_{2k-2j-r-1},\delta}(v(a))=\psi(\delta a_{1,1}+a_{1,2})$. 
We define an embedding $\iota_{5,j}:V_{2k-2j-r-1}\ltimes \Mat_{1\times (2k-2j-r-1)}\to  Sp_{b_j}$ by the formula
$$\iota_{5,j}(v(a))=\text{diag}(v(a),\ldots,v(a),v,\ldots,v,I_{2n},v^*,\ldots,v^*,v(a)^*\ldots,v(a)^*),$$
where $v$ appears $j$ times and $v(a)$ appears $r_1-j$ times. (Since $1\le j\le \min(r_1-1,(2k-r-3)/2)$, all indices 
appearing in $\iota_{5,j}$ are positive integers.)

For $1\le j\le \min(r_1-1,(2k-r-3)/2)$ define 
\begin{equation*}
K(j)=\int \overline{\varphi^{(r)}(g)}\theta_{b_j}^{(r)}\left(u\iota_{5,j}(v(a))\begin{pmatrix}I_{c_j}&&\\&g&\\&&I_{c_j}\end{pmatrix}\right)\psi_{U''_{2,j}}(u)
\psi'_{V_{2k-2j-r-1},\delta}(v(a))\,du\,dv(a)\,dg.
\end{equation*}
Here, $u$ is integrated over $U''_{2,j}(F)\backslash U''_{2,j}({\A})$, the variable $v(a)$ 
is integrated over $$V_{2k-2j-r-1}(F)\ltimes \Mat_{1\times (2k-2j-r-1)}(F)\backslash V_{2k-2j-r-1}({\A})\ltimes \Mat_{1\times (2k-2j-r-1)}({\A}),$$
$g$ is integrated over $Sp_{2n}(F)\backslash Sp_{2n}({\A})$, and the integer $c_j=(2k-r)r_1-jr $.

We now prove that for $0\le j< \min(r_1-1,(2k-r-3)/2)$, the integral $K(j)$ is zero for all choices of data if and only if integral $K(j+1)$ is zero for all choices of data. 
If $j<(2k-r-3)/2$ it follows that $2k-r-2j\ge 5$. We start with a root exchange that is similar to the one performed just prior to
 equation \eqref{arc3}. Let $\tt{M}_1$ denote the subgroup of $L_{2k-2j-r}$ consisting of all matrices of the form $\left(\begin{smallmatrix} 1&a\\ &I\end{smallmatrix}\right)$ 
where $a\in \Mat_{1\times (2k-2j-r-1)}$.
(Thus we may identify the group $V_{2k-2j-r-1}\ltimes \Mat_{1\times (2k-2r-2j-1)}$ with $V_{2k-2j-r-1}\cdot \tt{M}_1$.)
Let $\tt{M}$ be the subgroup of $Sp_{b_j}$ consisting of 
all matrices of the form $\text{diag}(m_1,\ldots,m_{r_1-j},I_c,m_{r_1-j}^*,\ldots,m_1^*)$ 
where each $m_i\in \tt{M}_1$ and $c=2(n+2k-2j-r-1)$.  
Define the subgroups $Q_i$ of $U''_{2,j}$  for $1\le i\le r_1-j-1$ similarly to the definition of $Q_i$ above \eqref{arc3}. 
Each such $Q_i$ is isomorphic to $\Mat_{(2k-2j-r-1)\times 1}$. Performing this root exchange, 
we deduce that $K(j)$ is zero for all choices of data if and only if the integral 
\begin{equation}\label{cons2}
\int \overline{\varphi^{(r)}(g)}\theta_{b_j}^{(r)}\left(um\iota_{5,j}(v(0))\begin{pmatrix}I_{c_j}&&\\&g&\\&&I_{c_j}\end{pmatrix}\right)
\psi_{U''_{2,j}}(u)
\psi_{V_{2k-2j-r-1},\delta}(v)\psi_{\tt M}(m)\,du\,dm\,dv\,dg
\end{equation}
vanishes for all choices of data.
Here, $m$ is integrated over $\tt{M}(F)\backslash \tt{M}({\A})$, and the character $\psi_{\tt M}$ is defined as follows.
Let $m=\text{diag}(m_1,\ldots,m_{r_1-j},I_c,m_{r_1-j}^*,\ldots,m_1^*)\in \tt{M}(\A)$. Then $\psi_{\tt M}(m)=\psi(m_{r_1-j}[1,2])$. The variable $u$ is integrated over 
$Q_1({\A})\ldots Q_{r_1-j-1}({\A})U''_{2,j}(F)\backslash U''_{2,j}({\A})$. The variable $v$ is integrated over $V_{2k-2j-r-1}(F)\backslash V_{2k-2j-r-1}({\A})$, and the character 
$\psi_{V_{2k-2j-r-1},\delta}$ is the restriction of the Whittaker character \eqref{whit-char}. The variable $g$ is integrated as before. 

Now we repeat the same steps as in the proof of Lemma~\ref{lem1}. First, we define a Weyl element $w_0^{,j}$ of $Sp_{b_j}$ as in \eqref{weyl1}. 
However, in the case at hand we need to interchange $j$ and $r_1-j$, to interchange $\beta$ and $\gamma$, and to replace $2k$ by $2k-r$. 
After conjugating by this Weyl element, we obtain 
\begin{multline}\label{cons3}
\int  \overline{\varphi^{(r)}(g)}
\theta_{b_j}^{(r)}\left (\begin{pmatrix} A&B&C\\ &I_{b_{j+1}}&B^*\\ &&A^*\end{pmatrix}
\begin{pmatrix} I_r&&\\ D&I_{b_{j+1}}&\\ E&D^*&I_r\end{pmatrix}\begin{pmatrix}I_r&&\\&u\iota_{5,j+1}(v(a))&\\&&I_r\end{pmatrix}
\begin{pmatrix}I_{c_j}&&\\&g&\\&&I_{c_j}\end{pmatrix}\right)\\ 
\widetilde{\psi}(A)\psi_{U''_{2,j+1}}(u)\psi'_{V_{2k-2j-r-3},\delta}(v(a))\,d(...)
\end{multline}
(this is similar to \eqref{wh8}). Here $u$ is integrated over $U''_{2,j+1}(F)\backslash U''_{2,j+1}({\A})$, the variable $v(a)$ is integrated over 
$$V_{2k-2j-r-3}(F)\ltimes \Mat_{1\times (2k-2j-r-3)}(F)\backslash V_{2k-2j-r-3}({\A})\ltimes \Mat_{1\times (2k-2j-r-3)}({\A}),$$ and 
$g$ is integrated over $Sp_{2n}(F)\backslash Sp_{2n}({\A})$. 
The variables $A, B, C, D$ and $E$ are integrated over groups which are defined in a similar way to \eqref{wh8}. 

After performing further root exchanges, we deduce that  \eqref{cons3} is zero for all choices of data if and only if the integral
\begin{multline}\label{cons4}
\int 
\overline{\varphi^{(r)}(g)}
\theta_{b_j}^{(r)}\left (\begin{pmatrix} A&B&C\\ &I_{b_{j+1}}&B^*\\ &&A^*\end{pmatrix}
\begin{pmatrix}I_r&&\\&u\iota_{5,j+1}(v(a))&\\&&I_r\end{pmatrix} 
\begin{pmatrix}I_{c_j}&&\\&g&\\&&I_{c_j}\end{pmatrix}\right)\\ 
\widetilde{\psi}(A)\psi_{U''_{2,j+1}}(u)\psi'_{V_{2k-2j-r-3},\delta}(v(a))\,d(...)
\end{multline}
is zero for all choices of data. Here $B$ is integrated over $\Mat_{r\times b_{j+1}}(F)\backslash \Mat_{r\times b_{j+1}}({\A})$, and $C$ over $\Mat_r^0(F)\backslash \Mat_r^0({\A})$. 
The integration over $B$ and $C$ gives the constant term along the unipotent radical of the parabolic group whose Levi part is 
$GL_r\times Sp_{b_{j+1}}$. (See the discussion following \eqref{wh10}.)
We deduce that the integral \eqref{cons4} is zero for all choices of data if and only if $K(j+1)$ is zero for all choices of data. 
This completes the proof that $K(j)$ is zero for all choices of data if and only if $K(j+1)$ is zero for all choices of data. 

We deduce that the integral \eqref{end4} is zero for all choices of data if and only if the integral $K(\min(r_1-1,(2k-r-3)/2))$ is zero for all choices of data. 
We now consider each case for the minimum.
Suppose first that $(2k-r-3)/2\le r_1-1$. Then
\begin{multline}\label{cons5}
K((2k-r-3)/2)=\int \overline{\varphi^{(r)}(g)}\theta_{2(n+2r-k)}^{(r)}\left(u\iota_{5,(2k-r-3)/2}(v(a))\begin{pmatrix}I_{2r-k}&&\\&g&\\&&I_{2r-k}\end{pmatrix}\right)\\ \psi_{U''_{2,(2k-r-3)/2}}(u)
\psi'_{V_2,\delta}(v(a))\,du\,dv(a)\,dg.
\end{multline}
In this integral $u$ is integrated over adelic quotient of the unipotent group $U''_{2,(2k-r-3)/2}$.  We recall that the group $U''_{2,(2k-r-3)/2}$
 is a subgroup of the unipotent radical of the parabolic subgroup of $Sp_{2n+4r-2k}$ whose Levi part is $GL_3^{r-k+1}\times GL_2^{(2k-r-3)/2}\times Sp_{2n}$. 
The group $V_2$ in \eqref{cons5} is the identity group while $a$ ranges over $1\times 2$ matrices, and the character $\psi'_{V_2,\delta}(v(a))=\psi(\delta a_{1,1}+a_{1,2})$. 
Defining the group $M$ and the groups $Q_i$, and then performing root exchange as immediately before integral \eqref{cons2}, we obtain 
as inner integration the Fourier coefficient of $\theta_{2(n+2r-k)}^{(r)}$ which corresponds to the unipotent orbit $((r+1)(2k-r-1)1^{2(n+2r-2k)})$. 
By Theorem~\ref{theta05}, part~\ref{theta05-1}, this Fourier coefficient is zero. Thus, integral $K((2k-r-3)/2)$, and hence integral \eqref{end4}, are both zero for all choices of data. 

Suppose instead that $(2k-r-3)/2> r_1-1$. Then we must analyze
\begin{multline*}
K(r_1-1)=\int \overline{\varphi^{(r)}(g)}\theta_{b_{r_1-1}}^{(r)}\left(u\iota_{5,r_1-1}(v(a))\begin{pmatrix}I_{c_{r_1-1}}&&\\&g&\\&&I_{c_{r_1-1}}\end{pmatrix}\right)\\
\psi_{U''_{2,r_1-1}}(u) \psi'_{V_{2(k-r+1),\delta}}(v(a))\,du\,dv(a)\,dg
\end{multline*}
where $b_{r_1-1}=2n+(2k-r)(r-1)-r(r-3)$. Here $u$ is integrated over $U''_{2,r_1-1}$, a subgroup of the unipotent radical of the parabolic subgroup of $Sp_{b_{r_1-1}}$ whose Levi part is 
$GL_{2k-2r+3}\times GL_{2(k-r+1)}^{r_1-1}\times Sp_{2n}$. We do not need to do any root exchange at the first step. Defining a suitable Weyl element, we deduce that 
the integral $K(r_1-1)$ is equal to an integral similar to integral \eqref{end3}. In other words, we obtain a matrix similar  to $B(v_0)$. However, in this case,
after a suitable change of variables, we obtain as inner integration an integral of the type $\int\psi(x)\,dx$, with $x$ integrated over $F\backslash {\A}$. 
This integral is zero. We omit the details. We deduce that integral $K(r_1-1)$ is zero for all choices of data. 
This completes the proof that integral \eqref{end4} is zero for all choices of data.

\section{The vanishing of the Whitaker coefficient of the lift in the case $k\le n-\frac{r+1}{2}$.}\label{4G}
In this section we study the case $k\le n-\frac{r+1}{2}$.  By the results of Section~\ref{whcoeff}, the integral \eqref{wh1} is zero for all choices 
of data if and only if the integral \eqref{whca115} is zero for all choices of data.
Our goal is to prove the vanishing of this integral when $k\le n-\frac{r+1}{2}$.
We explain how this follows from a Descent Conjecture; that is, we formulate and establish Proposition~\ref{general-two}, part~\ref{g4-2}. We also indicate how one may establish slightly weaker results without
this.

The proofs of Proposition~\ref{general-one}, part~\ref{g3-2} and Proposition~\ref{general-two}, part~\ref{g4-1}, relied on studying the integral \eqref{whca116} (see \eqref{whca115} to \eqref{whca119} above). 
This is a descent integral in the sense of \cite{G-R-S4} and of \cite{F-G1}, Section~3. With the notation as in \eqref{whca116}, consider the representation $\rho$ 
of $Sp_{2n+r-2k+1}^{(2r)}({\A})$ whose underlying vector space is generated by the functions
\begin{multline}\label{whca200}
f(m)=\\ \int\limits_{U_{1,r_1,n+r_1-k}(F)\backslash U_{1,r_1,n+r_1-k}({\A})}
\overline{\theta_{2n+r-2k+1}^{(2),\psi^\delta}}(l_{2n+r-2k+1}(v')m)
\theta_{2(n+r-k)}^{(r)}(v'm)\psi_{U_{1,r_1,n+r_1-k}}(v')\,dv'.
\end{multline}
Note that since $r$ is odd, each function $f$ is a genuine function on the $2r$-fold cover, and each $f$ is invariant under $Sp_{2n+r-2k+1}(F)$.
This construction was analyzed in \cite{F-G1}, and we established the following result there (\cite{F-G1}, Proposition 4.1).

\begin{proposition} Suppose $r$ is an odd integer such that the Conjecture~\ref{conj1} (the Orbit Conjecture) holds. 
Then the representation $\rho$ is a subrepresentation of the Hilbert space 
$$L^2(Sp_{2n+r-2k+1}^{(2r)}({F})\backslash Sp_{2n+r-2k+1}^{(2r)}({\A}))$$
and it has nonzero projection to the residual spectrum.
\end{proposition}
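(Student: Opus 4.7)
The plan is to verify two assertions: first, that each function $f$ produced by \eqref{whca200} is a genuine automorphic function on $Sp_{2n+r-2k+1}^{(2r)}(F)\backslash Sp_{2n+r-2k+1}^{(2r)}(\A)$ that lies in $L^2$; and second, that the representation $\rho$ generated by all such $f$ has nonzero projection to the residual spectrum.

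For the first assertion, the key observation is genuineness: on the $2$-fold cover the product $\overline{\theta_{2n+r-2k+1}^{(2),\psi^\delta}}(l(v')m)\,\theta_{2(n+r-k)}^{(r)}(v'm)$ is genuine with respect to the $2r$-fold cover since $r$ is odd, after restricting $m$ to $Sp_{2n+r-2k+1}^{(2r)}(\A)$ embedded suitably. Left-invariance under $Sp_{2n+r-2k+1}(F)$ comes from the stated invariance of the integrand and the compactness of $U_{1,r_1,n+r_1-k}(F)\backslash U_{1,r_1,n+r_1-k}(\A)$. To get $L^2$, recall that $\theta_{2(n+r-k)}^{(r)}$ is realized as a residue of an Eisenstein series at a point of the unitary axis, so by Langlands' theory of Eisenstein series it is square-integrable; the classical theta function $\theta_{2n+r-2k+1}^{(2),\psi^\delta}$ is of moderate growth (indeed essentially Schwartz on Siegel sets). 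Bounding the integrand by a product of these growth estimates and integrating over the compact unipotent quotient yields a square-integrable function on the $2r$-fold cover of $Sp_{2n+r-2k+1}$.

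For the second assertion, the plan is to exhibit a Fourier functional that detects the residual part. Using the Orbit Conjecture, I would compute the Whittaker coefficient of $f$ with respect to a suitable generic character $\psi_{Wh,\delta}$ of $Sp_{2n+r-2k+1}$ obtained from $\psi$ and $\delta$. Unfolding \eqref{whca200} in the style of Sections~\ref{whcoeff}--\ref{case2}, applying root exchanges together with the bound on the set $\mathcal{O}(\Theta_{2(n+r-k)}^{(r)})$ from Theorem~\ref{theta05}, part~\ref{theta05-1}, one reduces the outer Whittaker integral to an inner Whittaker integral of $\Theta_{r-1}^{(r)}$ against the character $e_\delta$. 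The Orbit Conjecture (Conjecture~\ref{conj1}) provides a representative $\delta\in F^*$ and a choice of data making this inner coefficient nonzero; hence $f$ is globally generic.

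The final step is to use this genericity to force a nonzero residual projection. The only irreducible residual representation of $Sp_{2n+r-2k+1}^{(2r)}(\A)$ whose unramified parameters match those that come out of the descent computation is the theta representation $\Theta^{(2r),\psi^\delta}_{2n+r-2k+1}$, which is built from residues of Eisenstein series on the $2r$-fold cover. Matching unramified parameters at almost all places and combining with the nonvanishing of the Whittaker coefficient above identifies the orthogonal projection of $\rho$ to the residual spectrum with a nonzero subspace containing $\Theta^{(2r),\psi^\delta}_{2n+r-2k+1}$. The main obstacle is the explicit unfolding: one has to arrange the unipotent groups so that the reduction lands exactly on the Whittaker coefficient of $\Theta_{r-1}^{(r)}$, for which the Orbit Conjecture applies, rather than on a Whittaker coefficient of some larger theta representation where nonvanishing is not controlled.
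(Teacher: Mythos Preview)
The paper does not prove this proposition here; it simply cites \cite{F-G1}, Proposition~4.1. So the comparison is against the argument in that reference, which proceeds by computing constant terms of $f$ along parabolics: the exponents are shown to satisfy Langlands' square-integrability criterion (giving $L^2$), and the nonvanishing of a suitable constant term (which is where the Orbit Conjecture enters, via the Whittaker coefficient of $\Theta_{r-1}^{(r)}$) shows $f$ is not cuspidal, hence has nonzero residual projection.

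Your argument has a genuine gap in the second assertion. You compute a Whittaker coefficient of $f$ and conclude that $f$ is globally generic. But genericity says nothing about the cuspidal--residual decomposition: cuspidal representations can perfectly well be generic. You then try to fill this by appealing to ``matching unramified parameters at almost all places,'' but this only pins down a representation if one has strong multiplicity one for $Sp_{2n+r-2k+1}^{(2r)}(\A)$---and the paper explicitly notes (immediately after Conjecture~\ref{conj20}) that the Descent Conjecture itself would \emph{follow} from strong multiplicity one, so this is not available to you. What you actually need is to show that $f$ has a nonzero constant term along some parabolic: since cuspidal forms have all constant terms zero, this forces a nonzero residual component. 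The Orbit Conjecture is used precisely at this step, because the constant-term computation of the descent integral unfolds to an expression involving the Whittaker functional on $\Theta_{r-1}^{(r)}$.

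Your $L^2$ argument is also too loose. Saying $\theta_{2(n+r-k)}^{(r)}$ is $L^2$ on the big group and $\theta^{(2)}$ has moderate growth does not yield $L^2$ on the small group after restricting and integrating over a compact unipotent quotient: restriction of an $L^2$ automorphic function to a smaller Levi-type subgroup need not be $L^2$. The correct route is again via constant terms: one computes them for $f$ and checks that the resulting exponents lie in the negative obtuse cone, which is Langlands' criterion for square-integrability.
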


We make the stronger Descent Conjecture:

\begin{conjecture}[The Descent Conjecture] \label{conj20}
The representation $\rho$ is in the residual spectrum, and 
each irreducible summand of $\rho$ is the representation $\Theta_{2n+r-2k+1}^{(2r)}$.
\end{conjecture}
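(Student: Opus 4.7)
The plan is to bootstrap from what is already known: by the preceding proposition, $\rho$ embeds in $L^2$ and has nonzero projection to the residual spectrum. So two things remain: (i) kill any cuspidal contribution, and (ii) identify the irreducible summands with $\Theta^{(2r)}_{2n+r-2k+1}$. My approach would combine a global constant-term analysis with a local (Satake parameter) identification.

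First I would compute the constant terms of $f(m)$ along the standard maximal parabolic subgroups $P_a$ of $Sp^{(2r)}_{2n+r-2k+1}$ with Levi $GL_a\times Sp^{(2r)}_{2n+r-2k+1-2a}$. Starting from the expression \eqref{whca200}, I would unfold the constant term integral over the unipotent radical $N_a$ of $P_a$, using the fact that $\theta^{(r)}_{2(n+r-k)}$ is itself an iterated residue of an Eisenstein series on $Sp^{(r)}_{2(n+r-k)}$ induced from the trivial character of the Siegel Levi. A sequence of root exchanges, together with Theorem \ref{theta05}, part \ref{theta05-1} (the vanishing of Fourier coefficients of $\Theta^{(r)}$ attached to orbits strictly larger than $\mathcal{O}_c$), should reduce the constant term to an integral involving the constant terms of $\overline{\theta^{(2),\psi^\delta}_{2n+r-2k+1}}$ and $\theta^{(r)}_{2(n+r-k)}$ separately. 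Matching these against the known constant terms of $\Theta^{(2r)}_{2n+r-2k+1}$ along $P_a$ (as described in \cite{F-G1}, Section 2) would show both that no cuspidal contribution appears (ruling out $a$ for which the constant term of $\Theta^{(2r)}_{2n+r-2k+1}$ vanishes) and that $\rho$ lies in the residual spectrum at precisely the locations where $\Theta^{(2r)}_{2n+r-2k+1}$ does.

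Next I would identify the summands locally at unramified places. Given an unramified local input, the local analogue of the descent integral \eqref{whca200} can be computed via the Jacquet module of the tensor product of the local theta and the local anti-Weil representation. Using the fact that $\Theta^{(r)}_{2(n+r-k)}$ and $\Theta^{(2),\psi^\delta}_{2n+r-2k+1}$ are, at unramified places, the unique unramified subquotients of the corresponding degenerate principal series, a Satake-parameter computation (analogous to the ones carried out locally in \cite{F-G3}, Section 6) should show that each irreducible unramified local component of $\rho$ has the same Satake parameter as $\Theta^{(2r)}_{2n+r-2k+1}$. Combined with the constant-term analysis above, this pins down each summand up to isomorphism at almost all places.

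The main obstacle is the final step: passing from the coincidence of unramified data and the residual location to the global identification of each summand with $\Theta^{(2r)}_{2n+r-2k+1}$. This is essentially a strong multiplicity one statement for residual genuine automorphic representations of $Sp^{(2r)}_{2n+r-2k+1}(\A)$. Such a statement is not available in the metaplectic setting in the generality needed here, and in fact the extent to which it can fail for higher degree covers is delicate. I would try to circumvent this either by computing enough Fourier coefficients of $f$ (along orbits comparable to $\mathcal{O}_c(\Theta^{(2r)}_{2n+r-2k+1})$) to directly match the wavefront set of each summand with that of $\Theta^{(2r)}_{2n+r-2k+1}$, or, failing that, by reducing to the stronger conjecture \cite{F-G1}, Conjecture~4.2, and isolating the residual piece through a contour shift in an auxiliary Eisenstein series on $Sp^{(2r)}_{2n+r-2k+1}$ whose residue is known to produce $\Theta^{(2r)}_{2n+r-2k+1}$.
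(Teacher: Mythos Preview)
This statement is a \emph{conjecture} in the paper, not a theorem: the authors do not prove it, and your proposal does not either. The paper's own discussion immediately after Conjecture~\ref{conj20} makes exactly the point you arrive at in your final paragraph: the local analogue is true (Lemma~\ref{local-descent}, which is essentially your Satake-parameter step), and hence the global conjecture would follow from a strong multiplicity one theorem for $Sp_{2n+r-2k+1}^{(2r)}(\A)$. That theorem is not available, and the paper does not claim otherwise.

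Your constant-term and local-component steps are reasonable heuristics and line up with what the paper establishes partially (the preceding proposition gives the $L^2$ embedding and nonzero residual projection; Lemma~\ref{local-descent} gives the unramified local descent). But the gap you yourself name is genuine and is precisely why the statement remains a conjecture. Your two proposed workarounds---matching enough Fourier coefficients to pin down the wavefront set globally, or isolating the residual piece via a contour shift---are not carried out, and neither is known to suffice: matching wavefront sets does not by itself determine an automorphic representation, and the contour-shift idea presupposes that the descent space is already contained in the residue, which is the content of the conjecture. So your proposal correctly diagnoses the obstruction but does not overcome it; this is consistent with the paper, which leaves Conjecture~\ref{conj20} open and instead proves the weaker unconditional Proposition~\ref{weaker-prop-4} under a general-position hypothesis on $\pi^{(r)}$.
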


\noindent
In \cite{F-G1}, Conjecture~4.2, we conjectured that $\rho$ is in fact {\it exactly} the theta representation $\Theta_{2n+r-2k+1}^{(2r)}$.
The slightly weaker statement of Conjecture~\ref{conj20} suffices for our applications.
We also remark that the local version of Conjecture~\ref{conj20} is true.  This is given precisely in Lemma~\ref{local-descent} below.
As a consequence the Descent Conjecture would follow from a strong multiplicity one theorem for $Sp_{2n+r-2k+1}^{(2r)}(\A)$.

We recall that the theta representation 
$\Theta_{2n+r-2k+1}^{(2r)}$ on $Sp_{2n+r-2k+1}^{(2r)}({\A})$ is obtained from the residue of the Eisenstein series $E^{(2r)}_{2n+r-2k+1}(\cdot,s)$ that is 
associated with the induced representation $\text{Ind}_{P({\A})}^{Sp_{2n+r-2k+1}^{(2r)}({\A})}\Theta_{GL_{n+r_1-k+1}}^{(r)}\delta_P^s$. 
Here $P$ is the maximal parabolic subgroup of $Sp_{2n+r-2k+1}$ whose Levi part is $GL_{n+r_1-k+1}$ and
$\Theta_{GL_{n+r_1-k+1}}^{(r)}$ is the theta representation of $GL_{n+r_1-k+1}^{(r)}({\A})$. See \cite{F-G1}, pg.\ 94.
Using this, Conjecture~\ref{conj20}  allows us to establish the following result.

\begin{proposition}\label{th20}
Assume Conjecture~\ref{conj20} holds. Then Proposition~\ref{general-two}, part~\ref{g4-2} is true.
\end{proposition}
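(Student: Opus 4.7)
My plan is to reduce, via Conjecture~\ref{conj20}, the nonvanishing of any Whittaker coefficient $\mathcal{W}_{k,\delta}(f)$ to the nonvanishing of the period integrals \eqref{intro4} or \eqref{intro5} appearing in Proposition~\ref{general-two}, part~\ref{g4-1}, and then to show that these vanish identically when $k \le n-(r+1)/2$ by exploiting the fact that $\theta_{2n+r-2k+1}^{(2r),\psi^\delta}$ is a residue of an Eisenstein series whose inducing data is a theta representation on a general linear group of rank strictly greater than $r$.

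First I would invoke Proposition~\ref{general-two}, part~\ref{g4-1}, which is established in Sections~\ref{case1} and \ref{case2} under the Descent Conjecture: the nonvanishing of $\mathcal{W}_{k,\delta}(f)$ for some data is equivalent to the nonvanishing, for some data, of \eqref{intro4} (when $2 \le k \le (r+1)/2$) or of \eqref{intro5} (when $(r+3)/2 \le k \le n+(r-1)/2$). Either way, the range $k \le n-(r+1)/2$ is covered by one of these two alternatives, and it suffices to show the corresponding period integral vanishes for all choices of data.

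The key step is to replace $\theta_{2n+r-2k+1}^{(2r),\psi^\delta}(h)$ by the Eisenstein series $E^{(2r)}_{2n+r-2k+1}(h,s)$ of which it is the residue. Recall from \cite{F-G1}, pg.~94, that this Eisenstein series is induced from the Siegel parabolic whose Levi factor is the $r$-fold cover of $GL_{n+r_1-k+1}$, with inducing representation $\Theta^{(r)}_{GL_{n+r_1-k+1}}$. Since $n+r_1-k+1 = n-k+(r+1)/2$, our hypothesis is equivalent to $n+r_1-k+1 \ge r+1$. Taking $\mathrm{Re}(s)$ large and unfolding the Eisenstein series as in \cite{G-R-S2}, Section~2, one expects every orbit other than the open one to contribute zero (by cuspidality of $\varphi^{(r)}$, by the smallness of $\Theta^{(r)}$ recorded in Theorem~\ref{theta05}, part~\ref{theta05-1}, and by the smallness of $\Theta_{2n+r-2k+1}^{(2),\psi^\delta}$), and the open-orbit contribution to produce as an inner integration a Whittaker coefficient of $\Theta^{(r)}_{GL_{n+r_1-k+1}}$. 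By \cite{K-P}, the exceptional theta representation of $GL_a^{(r)}$ has no nonzero Whittaker coefficient once $a > r$; combined with the above dimension count, the inner integral vanishes identically. Consequently the entire integral vanishes for $\mathrm{Re}(s)$ large, hence vanishes as a meromorphic function in $s$ by analytic continuation, and its residue at the pole defining $\theta_{2n+r-2k+1}^{(2r),\psi^\delta}$ is zero, yielding the vanishing of \eqref{intro4} or \eqref{intro5} and hence the non-genericity of $\sigma^{(r)}_{n,k}$.

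The principal obstacle will be the orbit analysis in the unfolding. Concretely, one must enumerate the double cosets $P\backslash Sp_{2n+r-2k+1}/R$, where $R$ is the stabilizer built out of the Heisenberg projection $l_{2n+r-2k+1}(u)$ together with the second theta factor and the cuspidal $\varphi^{(r)}$, and check that each non-principal orbit yields an integral that vanishes for one of the reasons listed above. This interaction between $l_{2n+r-2k+1}(u)$ and the Bruhat decomposition is more intricate than in the classical case $r=1$, but is entirely in the spirit of the root-exchange computations already carried out in Sections~\ref{case1} and \ref{case2}, so no new ideas should be required.
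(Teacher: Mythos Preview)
Your proposal is correct and follows essentially the same route as the paper's own proof: reduce via part~\ref{g4-1} to the vanishing of \eqref{intro4} or \eqref{intro5}, replace $\theta_{2n+r-2k+1}^{(2r),\psi^\delta}$ by the Eisenstein series $E^{(2r)}_{2n+r-2k+1}(\cdot,s)$ whose residue it is, unfold as in \cite{G-R-S2} using cuspidality of $\varphi^{(r)}$, and observe that the surviving term contains a Whittaker coefficient of $\Theta^{(r)}_{GL_{n+r_1-k+1}}$, which vanishes by \cite{K-P} precisely because $k\le n-(r+1)/2$ forces $n+r_1-k+1>r$. The paper's argument is in fact even terser than yours about the orbit analysis, simply asserting that the unfolding ``is exactly as in the linear group case'' with a reference to \cite{G-R-S2}.
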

\begin{proof}
Under this hypothesis, the argument is similar to the linear case $r=1$ (see \eqref{intro2} and \eqref{intro3}).  
That is, we replace the theta function by an Eisenstein series and unfold in order to obtain zero.
Indeed, it follows from Conjecture~\ref{conj20} that the integral \eqref{wh1} is zero for all choices of data if and only if
the  integrals \eqref{intro4} and \eqref{intro5} are zero for all choices of data.  Suppose $2\leq k\leq \tfrac{r+1}{2}.$
Consider the integral 
\begin{multline}\label{whca221}
\int\limits_{Sp_{2n}(F)\backslash Sp_{2n}({\A})}\ \  
\int\limits_{U_{n+\frac{r+1}{2}-k,\frac{r+1}{2}-k}(F)\backslash U_{n+\frac{r+1}{2}-k,\frac{r+1}{2}-k}({\A})}
\overline{\varphi^{(r)}(g)}\\ \overline{\theta_{2n}^{(2),\psi^\delta}}(l(u)g) 
E^{(2r)}_{2n+r-2k+1}\left(u\begin{pmatrix}I_{\tfrac{r+1}{2}-k}&&\\&g&\\&&I_{\tfrac{r+1}{2}-k}\end{pmatrix},s\right)
\psi_{U_{n+\frac{r+1}{2}-k,\frac{r+1}{2}-k}}(u)\,du\,dg.
\end{multline}
We can unfold this integral, and the unfolding process is exactly as in the linear group case. See \cite{G-R-S2}. 
Doing so, and using the cuspidality of $\varphi^{(r)}$, we deduce that integral \eqref{whca221} unfolds to an integral which has
as inner integration the Whittaker coefficient of the representation 
$\Theta_{GL_{n+r_1-k+1}}^{(r)}$. However, if $r<n+r_1-k+1$, then by \cite{K-P}, Theorem I.3.5 and II.2.1, this representation is not generic.  
A similar argument applies to integral \eqref{intro5}. The Proposition follows.
\end{proof}

To conclude, we observe that one may obtain somewhat weaker results towards Proposition~\ref{general-two}, part~\ref{g4-2} without Conjecture~\ref{conj20}. 

\begin{proposition}\label{weaker-prop-4}
Suppose that
$\pi^{(r)}$ is an irreducible cuspidal representation of $Sp_{2n}^{(r)}({\A})$ with the property that it has at least one unramified constituent which is in general position.
Suppose that $k\le n-\frac{r+1}{2}$. Then the representation  $\sigma_{n,k}^{(r)}$ is not generic.
That is, Proposition~\ref{general-two}, part~\ref{g4-2} holds for $\pi^{(r)}$.
\end{proposition}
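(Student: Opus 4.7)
The plan is to adapt the proof of Proposition~\ref{th20}, replacing the unavailable global Descent Conjecture by the known local version (Lemma~\ref{local-descent}) together with the general-position hypothesis at a single unramified place. By the results of Sections~\ref{whcoeff} and \ref{case2}, the vanishing of $\mathcal{W}_{k,\delta}(f)$ for all data is equivalent to the vanishing of the integral \eqref{intro4} (if $2\le k\le (r+1)/2$) or \eqref{intro5} (if $(r+3)/2\le k\le n-\tfrac{r+1}{2}$) for all choices of data. The factor $\theta_{2n+r-2k+1}^{(2r),\psi^\delta}$ in these integrals is, by the derivation in Section~\ref{case1}, an arbitrary function $f\in\rho$ of the form \eqref{whca200}; so the task reduces to proving that every such integral vanishes without assuming $\rho=\Theta_{2n+r-2k+1}^{(2r)}$ globally.

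The first step is to upgrade the conclusion of the Proposition preceding Conjecture~\ref{conj20} under the extra hypothesis. The local version Lemma~\ref{local-descent} identifies $\rho_v$ with the local theta representation $\Theta_{2n+r-2k+1,v}^{(2r)}$ at every place $v$. In particular, if $\pi^{(r)}_{v_0}$ is unramified with Satake parameters in general position, then the unramified constituent of $\rho_{v_0}$ is forced to be the unramified constituent of $\Theta_{2n+r-2k+1,v_0}^{(2r)}$, which is a \emph{simple} (and rigid) representation determined by these parameters. Combining this rigidity at $v_0$ with the fact, already proven, that $\rho$ has nonzero projection to the residual spectrum, one obtains that the cuspidal part of $\rho$ must be zero (it would have to share local data with $\Theta_{2n+r-2k+1}^{(2r)}$ everywhere, but in general position at $v_0$ this excludes a cuspidal embedding) and that each irreducible summand of the residual part is the residue of the Eisenstein series $E^{(2r)}_{2n+r-2k+1}(\cdot,s)$ induced from $\Theta_{GL_{n+r_1-k+1}}^{(r)}$. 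In other words, for data satisfying the hypothesis, every $f\in\rho$ is realized as a residue of this Eisenstein series.

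Once this is in hand, the rest follows the template of Proposition~\ref{th20}. Replacing $\theta_{2n+r-2k+1}^{(2r),\psi^\delta}$ in \eqref{intro4} (respectively \eqref{intro5}) by the Eisenstein series $E^{(2r)}_{2n+r-2k+1}(\cdot,s)$ produces an integral analogous to \eqref{whca221}, meromorphic in $s$, whose residue at the appropriate point recovers the original integral. For $\operatorname{Re}(s)$ large, this integral unfolds in the standard way, using the cuspidality of $\varphi^{(r)}$ to kill the non-open cells; the inner integration that remains is a Whittaker coefficient of the theta representation $\Theta_{GL_{n+r_1-k+1}}^{(r)}$. The condition $k\le n-\tfrac{r+1}{2}$ rewrites as $r<n+r_1-k+1$, which by \cite{K-P}, Theorems I.3.5 and II.2.1, forces this Whittaker coefficient to vanish identically. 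Taking the residue at the distinguished $s$ yields the required vanishing of \eqref{intro4}/\eqref{intro5}.

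The main obstacle will be the second step: extracting, from local descent together with the single-place general-position hypothesis, the structural statement that every element of $\rho$ is a residue of the specific Eisenstein series $E^{(2r)}_{2n+r-2k+1}(\cdot,s)$. This is a restricted form of the strong multiplicity one statement mentioned after Conjecture~\ref{conj20}, asserted only for the constituents that appear in $\rho$, and the rigidity of unramified parameters in general position is what compensates for not having the full multiplicity-one theorem for $Sp_{2n+r-2k+1}^{(2r)}(\A)$.
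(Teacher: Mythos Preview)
Your proposal has a genuine gap, and it stems from a misreading of what the general-position hypothesis is doing.

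The hypothesis is that \emph{$\pi^{(r)}$} has an unramified local component in general position. The representation $\rho$ of \eqref{whca200} is built purely from $\Theta_{2(n+r-k)}^{(r)}$ via descent and has nothing to do with $\pi^{(r)}$. So your claim that ``if $\pi^{(r)}_{v_0}$ is unramified in general position then the unramified constituent of $\rho_{v_0}$ is forced to be $\ldots$'' is a non sequitur; the premise about $\pi^{(r)}_{v_0}$ cannot constrain $\rho$. More importantly, your second step---deducing that the cuspidal part of $\rho$ vanishes because its local components agree with those of $\Theta_{2n+r-2k+1}^{(2r)}$ almost everywhere---is exactly the strong multiplicity one statement for $Sp_{2n+r-2k+1}^{(2r)}(\A)$ that the paper says would \emph{imply} the Descent Conjecture. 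You acknowledge this in your final paragraph, but you do not resolve it, and the general-position assumption on $\pi^{(r)}$ gives you no leverage here.

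The paper's argument is different in kind: it never tries to identify $\rho$ globally. Instead, assuming the Whittaker coefficient is nonzero for some data, one obtains a nonzero global integral of the form \eqref{intro4} or \eqref{intro5} (with the descent function $f\in\rho$ in place of $\theta_{2n+r-2k+1}^{(2r),\psi^\delta}$). Picking a place $\nu$ where everything is unramified, this produces a nonzero element of a local $\Hom$ space involving $\pi^{(r)}_\nu$, $\Theta_{2n}^{(2),\psi^\delta}$, and the twisted Jacquet module $J_{U_{1,r_1,n+r_1-k}(F_\nu)}(\Theta_{2(n+r-k)}^{(r)})$. Now Lemma~\ref{local-descent} identifies this Jacquet module with the \emph{local} $\Theta^{(2r)}_{2n+r-2k+1}$---no global statement about $\rho$ is needed. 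One then shows the $\Hom$ space is zero by a local analogue of the unfolding in \eqref{whca221}: Frobenius reciprocity replaces the Eisenstein unfolding, and the Geometrical Lemma of \cite{B-Z} replaces the Fourier expansion. The general-position hypothesis on $\pi^{(r)}_\nu$ enters precisely here, to kill the contributions coming from constant-term pieces in this local filtration; it plays no role in the structure of $\rho$.
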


We sketch the proof.  First, suppose that the Whittaker coefficient $W_{k,\delta}(f)$ is nonzero for some choice of data and some $\delta\in F^\times$.
The arguments above show that the integral \eqref{wh1} is nonzero if and only if integrals similar to \eqref{intro4} and \eqref{intro5} are nonzero, where in those integrals the functions
$\theta_{2n+r-2k+1}^{(2r),\psi^\delta}$ are known only to be functions of the form $f(m)$ as in \eqref{whca200}, that is, functions obtained by the descent process that uses the
theta representation $\Theta_{2(n+r-k)}^{(r)}$ to construct automorphic functions on  $Sp_{2n+r-2k+1}^{(2r)}({\A})$.  
The nonvanishing of this integral allows us to conclude that a local Hom space is nonzero.  Indeed, choose vectors such that the integral is nonzero, and choose a finite
place $\nu$ such that all data are unramified.  Since the local groups at $\nu$ act on the representations, we obtain a nonzero trilinear form.  

Suppose that $2\leq k\leq \tfrac{r+1}{2}$ so that
we consider \eqref{intro4}; the 
case corresponding to \eqref{intro5} is treated in a similar way.  Let $\Theta$ now denote the local theta representations with the groups and covers notated as in the global case.
Let $U_{1,r_1,n+r_1-k}(F_\nu)$ act on $\Theta_{2n+r-2k+1}^{(2),\psi^\delta}$  via the embedding $l_{2n+r-2k+1}$ of this group in $\mathcal{H}_{2n+r-2k+2}(F_\nu)$.
Let
$$J_{U_{1,r_1,n+r_1-k}(F_\nu)}\left(\Theta_{2(n+r-k)}^{(r)}\right)=V/W$$  
be the $Sp^{(2r)}_{2n+r-2k+1}(F_\nu)$-module that arises from the local descent, that is, the twisted Jacquet module.
Here $V$ is the vector space generated by vectors of the form $v_1\otimes v_2$, $v_1\in \Theta_{2n+r-2k+1}^{(2),\psi^\delta}$, $v_2\in 
\Theta_{2(n+r-k)}^{(r)}$, and $W$ is the subspace
$$\{(u\cdot v_1)\otimes (u\cdot v_2)-\psi_{U_{1,r_1,n+r_1-k}}(u) v_1\otimes v_2 \mid v_1\in \Theta_{2n+r-2k+1}^{(2),\psi^\delta}, v_2\in 
\Theta_{2(n+r-k)}^{(r)}, u \in U_{1,r_1,n+r_1-k}(F_\nu)\}.$$
Then $J_{U_{1,r_1,n+r_1-k}(F_\nu)}(\Theta_{2(n+r-k)}^{(r)})$ is an $Sp_{2n}(F_\nu)$-module via the embedding 
$$h\mapsto \diag(I_{\frac{r+1}{2}-k},h,I_{\frac{r+1}{2}-k})$$
of $Sp_{2n}(F_\nu)$ in $Sp_{2n+r-2k+1}(F_\nu)$.
Under the assumption of nonvanishing of the integral \eqref{intro4}, we conclude that the Hom space
$$\Hom_{Sp_{2n}(F_\nu)\times U_{n+\frac{r+1}{2}-k,\frac{r+1}{2}-k}(F_\nu)}(\overline{\pi^{(r)}_\nu}\otimes \Theta_{2n}^{(2),\psi^\delta}\otimes J_{U_{1,r_1,n+r_1-k}(F_\nu)}(\Theta_{2(n+r-k)}^{(r)})
 \psi_{U_{n+\frac{r+1}{2}-k,\frac{r+1}{2}-k}},
\C)$$
is nonzero, where the group $U_{n+\frac{r+1}{2}-k,\frac{r+1}{2}-k}(F_\nu)$ acts on $\Theta_{2n}^{(2),\psi^\delta}$ via $l_{2n}$.

We now use information about the local descent of $\Theta^{(r)}_{2(n+r-k)}$, namely that the local version of Conjecture~\ref{conj20} is true.  This is given the following Lemma.

\begin{lemma}\label{local-descent} Suppose that all data are unramified.  Then 
$$J_{U_{1,r_1,n+r_1-k}(F_\nu)}\left(\Theta_{2(n+r-k)}^{(r)}\right)\cong \Theta^{(2r)}_{2n+r-2k+1}.$$
\end{lemma}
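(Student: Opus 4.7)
The plan is to compute the twisted Jacquet module at the unramified place $\nu$ by exploiting the explicit realization of $\Theta_{2(n+r-k)}^{(r)}$ as the unique spherical subquotient of a degenerate principal series.

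First, I would write $\Theta_{2(n+r-k)}^{(r)}$ at $\nu$ as the unique irreducible spherical subquotient of the degenerate principal series $I(s_1) = \Ind_{P(F_\nu)}^{Sp_{2(n+r-k)}^{(r)}(F_\nu)}\Theta_{GL_{n+r-k}}^{(r)}\,\delta_{P}^{s_1}$, where $P$ is the Siegel parabolic and $s_1$ is the point from which the theta representation arises as a residue of an Eisenstein series (see \cite{F-G1}, Section~2). Similarly, by \cite{F-G1}, p.~94, $\Theta_{2n+r-2k+1}^{(2r)}$ is the unique spherical subquotient of an analogous degenerate principal series on $Sp_{2n+r-2k+1}^{(2r)}(F_\nu)$ induced from $\Theta_{GL_{n+r_1-k+1}}^{(r)}$ at a specific parameter $s_2$. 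Both identifications hold at unramified places by standard spherical-uniqueness arguments for covers.

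Next, I would apply a twisted geometric lemma of Bernstein--Zelevinsky type to compute $J_{U_{1,r_1,n+r_1-k}(F_\nu)}\bigl(I(s_1)\bigr)$. The double cosets in $P(F_\nu)\backslash Sp_{2(n+r-k)}(F_\nu)/U_{1,r_1,n+r_1-k}(F_\nu)$ are parameterized by a set of Weyl representatives; for each one checks the compatibility of the character $\psi_{U_{1,r_1,n+r_1-k}}$ with the restricted inducing data $\Theta_{GL_{n+r-k}}^{(r)}\delta_P^{s_1}$. Using root-exchange techniques of the type employed throughout Sections~\ref{whcoeff}--\ref{case2}, together with the smallness of the unipotent support of $\Theta_{GL_{n+r-k}}^{(r)}$ (\cite{K-P}), all but one Weyl-orbit contribution should vanish. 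The surviving orbit would naturally yield a degenerate principal series on $Sp_{2n+r-2k+1}^{(2r)}(F_\nu)$ induced from $\Theta_{GL_{n+r_1-k+1}}^{(r)}$ at parameter $s_2$, that is, exactly the induction from which $\Theta_{2n+r-2k+1}^{(2r)}$ arises as a spherical subquotient. Passing from $I(s_1)$ to its spherical subquotient $\Theta_{2(n+r-k)}^{(r)}$ on the source and from the surviving induced representation to its spherical subquotient on the target would identify $J_{U_{1,r_1,n+r_1-k}(F_\nu)}\bigl(\Theta_{2(n+r-k)}^{(r)}\bigr)$ with $\Theta_{2n+r-2k+1}^{(2r)}$. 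Nonvanishing would be verified by a direct unramified calculation on the spherical vector, producing a Gindikin--Karpelevich-type factor that is nonzero at $s_1$.

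The main obstacle will be the twisted Weyl-orbit analysis in the geometric lemma. A priori many Weyl-orbit strata could contribute nontrivially, and eliminating all but one requires a simultaneous use of root-exchange maneuvers, the smallness of the inducing representation $\Theta_{GL_{n+r-k}}^{(r)}$, and the unipotent-orbit bound of Theorem~\ref{theta05}, part~\ref{theta05-1}. Tracking how the Heisenberg-like embedding $l_{2n+r-2k+1}$ enters $U_{1,r_1,n+r_1-k}$ and pairs with the character $\psi_{U_{1,r_1,n+r_1-k}}$ demands careful bookkeeping. Once a single surviving orbit is pinned down and matched with the induction defining $\Theta_{2n+r-2k+1}^{(2r)}$, the isomorphism follows by uniqueness of spherical subquotients and Satake-parameter matching at $\nu$.
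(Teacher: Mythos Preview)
Your plan is more elaborate than what the paper actually does, and the extra machinery you propose introduces a step that is not obviously justified.

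The paper's argument is short and direct: the local analogue of the computations in \cite{F-G1}, Section~4, already shows that the twisted Jacquet module $J_{U_{1,r_1,n+r_1-k}(F_\nu)}(\Theta_{2(n+r-k)}^{(r)})$ is nonzero and computes its exponent, which coincides with that of $\Theta^{(2r)}_{2n+r-2k+1}$. Since $\Theta_{2(n+r-k)}^{(r)}$ is generated by a single $K_1$-fixed vector ($K_1\cong Sp_{2(n+r-k)}(O_\nu)$), the Jacquet module is generated by the image of that vector, which is $K_2$-fixed ($K_2\cong Sp_{2n+r-2k+1}(O_\nu)$). A spherical representation with a prescribed exponent is unique, so the isomorphism follows. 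In other words, your very last sentence --- sphericity plus Satake-parameter matching --- \emph{is} the entire proof; the paper simply reads off the exponent from prior work rather than deriving it from a Bernstein--Zelevinsky filtration.

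By contrast, your route through the geometric lemma on the full degenerate principal series $I(s_1)$ adds a layer that does not obviously pass to the subquotient. Exactness of the Jacquet functor tells you only that $J(\Theta_{2(n+r-k)}^{(r)})$ is a subquotient of $J(I(s_1))$; the filtration of $J(I(s_1))$ by Weyl-orbit strata need not be compatible with the projection to $\Theta_{2(n+r-k)}^{(r)}$, so even after you eliminate all but one stratum of $J(I(s_1))$ you have not yet identified $J(\Theta_{2(n+r-k)}^{(r)})$ inside it. You would still need the sphericity-and-exponent argument to finish, at which point the geometric-lemma analysis has bought you nothing. The simpler approach is to compute the exponent of $J(\Theta_{2(n+r-k)}^{(r)})$ directly on the spherical vector (this is the Gindikin--Karpelevich step you mention, and is what \cite{F-G1} supplies) and conclude.
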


Though this is not formally stated in \cite{F-G1}, it follows from the local versions of the arguments in Section~4 there.  Let $\nu$ be an unramified place and let $O_\nu$ be the ring of integers of $F_\nu$.
The computation there is equivalent to showing that the Jacquet module at $\nu$ is nonzero and
its exponent matches that of $\Theta^{(2r)}_{2n+r-2k+1}$. Recall that the local theta representation $\Theta_{2(n+r-k)}^{(r)}$ (resp.\ $\Theta^{(2r)}_{2n+r-2k+1}$)
 is generated by a nonzero $K_1$-fixed vector 
with $K_1\cong Sp_{2(n+r-k)}(O_\nu)$ a
compact open subgroup of $Sp_{2(n+r-k)}^{(r)}(F_\nu)$ (resp.\ a nonzero $K_2$-fixed vector with $K_2\cong Sp_{2n+r-2k+1}(O_\nu)$ a compact open subgroup of $Sp^{(2r)}_{2n+r-2k+1}(F_\nu)$).
Since the image of the $K_1$-fixed vector in the Jacquet module is $K_2$-fixed and the exponents match, the Lemma follows.

It is sufficient to show that the representations in questions do not support 
such a local trilinear form. 
This may be established the same way as the analysis of integral \eqref{whca221} above, but ported to local fields. For example, instead of unfolding we use Frobenius
reciprocity, and instead of a Fourier expansion we use the Geometrical Lemma of \cite{B-Z}, p.\ 448.  See \cite{F-G3}, Section~6, for an example of such an argument.
The contribution from the terms involving the constant terms will vanish by the assumption of general position.  The result follows.

\end{document}